\theoremstyle{plain}
\newtheorem{thm}{\protect\theoremname}[section]
\theoremstyle{remark}
\newtheorem{rem}[thm]{\protect\remarkname}
\theoremstyle{definition}
\newtheorem{defn}[thm]{\protect\definitionname}
\theoremstyle{plain}
\newtheorem{cor}[thm]{\protect\corollaryname}
\theoremstyle{plain}
\newtheorem{lem}[thm]{\protect\lemmaname}
\theoremstyle{plain}
\newtheorem{prop}[thm]{\protect\propositionname}
\theoremstyle{definition}
\newtheorem{example}[thm]{\protect\examplename}
\providecommand{\corollaryname}{Corollary}
\providecommand{\definitionname}{Definition}
\providecommand{\examplename}{Example}
\providecommand{\lemmaname}{Lemma}
\providecommand{\propositionname}{Proposition}
\providecommand{\remarkname}{Remark}
\providecommand{\theoremname}{Theorem}
\begin{document}
\global\long\def\C{\mathbb{C}}%
\global\long\def\cnk{\left(\C^{n}\right)^{\otimes k}}%
\global\long\def\P{P_{k}(n)}%
\global\long\def\A{A_{k}(n)}%
\global\long\def\element{\left(e_{1}-e_{2}\right)\otimes\dots\otimes\left(e_{2k-1}-e_{2k}\right)}%
\global\long\def\Sn{S_{n}}%
\global\long\def\endsn{\mathrm{End}{}_{\Sn}}%
\global\long\def\xilambda{\sum_{\sigma\in S_{k}}\frac{d_{\lambda}}{k!}\chi^{\lambda}(\sigma)\left[v_{\sigma(1)}\otimes\dots\otimes v_{\sigma(k)}\right]}%
\global\long\def\p{\mathscr{P}}%
\global\long\def\dkn{\left\langle e_{i_{1}}\otimes\dots\otimes e_{i_{k}}:|\{i_{1},\dots,i_{k}\}|=k\right\rangle }%
\global\long\def\normxilambda{\xi_{\lambda}^{\mathrm{norm}}}%
\global\long\def\skdelta{S_{k}^{\Delta}}%
\global\long\def\vlambdacheck{\check{V}^{\lambda}}%
\global\long\def\Q{\mathcal{Q}}%
\global\long\def\endoofvlambda{\normxilambda\otimes\check{\xi}_{\lambda}^{\mathrm{norm}}}%
\global\long\def\cntwok{\left(\C^{n}\right)^{\otimes2k}}%
\global\long\def\vsigmas{v_{\sigma(1)}\otimes\dots\otimes v_{\sigma(k)}\otimes v_{\sigma'(1)}\otimes\dots\otimes v_{\sigma'(k)}}%
\global\long\def\D{\mathcal{D}}%
\global\long\def\tablambda{\mathrm{Tab}_{\mathrm{\normxilambda}}\left(\lambda^{+}(2k)\right)}%
\global\long\def\part{\mathrm{Part}\left([2k]\right)}%
\global\long\def\ei{e_{i_{i}}\otimes\dots\otimes e_{i_{k}}}%
\global\long\def\csk{\C\left[S_{k}\right]}%
\global\long\def\mixedtensor{\cnk\otimes\left(\left(\C^{n}\right)^{\vee}\right)^{\otimes l}}%
\global\long\def\shortmixedtensor{C_{k,l}^{n}}%
\global\long\def\f{F_{r}}%
\global\long\def\b{\mathcal{B}_{r}^{\times}}%
\global\long\def\u{\mathcal{U}_{\lambda,n}}%
\global\long\def\refinedpart{\overset{\star}{\mathrm{Part}}\left(\left[|w|_{f}k\right]\right)}%
\global\long\def\skpart{\mathrm{Part}_{\leq S_{k}}\left([2k]\right)}%
\global\long\def\gsigma{G\left(\sigma_{f},\tau_{f},\pi_{i}\right)}%
\global\long\def\hatgsigma{\hat{G}\left(\sigma_{f},\tau_{f},\pi_{i}\right)}%
\global\long\def\gamsig{\Gamma\left(\sigma_{f},\tau_{f},\pi_{i}\right)}%
\global\long\def\tgamsig{\tilde{\Gamma}\left(\sigma_{f},\tau_{f},\pi_{i}\right)}%
\global\long\def\Astack{\mathcal{A}_{\hat{\Lambda}'}}%
\global\long\def\B{\mathcal{B}_{\hat{\Lambda}'}}%

\title{Random permutations acting on $k$--tuples have near--optimal spectral
gap for $k=\mathrm{poly}(n)$ }
\author{Ewan Cassidy}
\maketitle
\begin{abstract}
We extend Friedman's theorem to show that, for any fixed $r>1$, a
random $2r$--regular Schreier graph associated with the action of
$r$ uniformly random permutations of $[n]$ on $k_{n}$--tuples
of distinct elements in $[n]$ has a near--optimal spectral gap with
high probability, provided $k_{n}\leq n^{\frac{1}{20}-\epsilon}.$
Previously this was known only for $k$--tuples where $k$ is fixed.
In fact, we prove the stronger result of strong convergence of random
permutations in irreducible representations of quasi--exponential
dimension.

Along the way, we give a new bound for the expected stable irreducible
character of a random permutation obtained via a word map, showing
that $\mathbb{E}\left[\chi^{\mu}\left(w(\sigma_{1},\dots,\sigma_{r})\right)\right]=O\left(\frac{1}{\dim\chi^{\mu}}\right)=O\left(n^{-k}\right)$,
where $k$ is the number of boxes outside the first row of the Young
diagram $\mu,$ solving one aspect of a conjecture of Hanany and Puder.
We obtain this bound using an extension of Wise's $w$--cycle conjecture.
\end{abstract}
\tableofcontents{}

\section{Introduction\label{sec:Introduction}}

\subsection{Random regular graphs and word maps}

Let $G_{n,r}$ be a connected $2r$--regular graph on $n$ vertices,
$A_{G_{n,r}}$ its adjacency matrix with eigenvalues $2r=\lambda_{1}>\lambda_{2}\geq\dots\geq\lambda_{n}\ge-2r$.
Denote $\lambda(G_{n,r})=\mathrm{max}_{|\lambda_{i}|\neq2r}|\lambda_{i}|$
the largest non--trivial eigenvalue. The Alon--Boppana bound dictates
that 
\[
\lambda\left(G_{n,r}\right)\geq2\sqrt{2r-1}-o(1),
\]
so that $2r$--regular graphs with $\lambda\left(G_{n,r}\right)\leq2\sqrt{2r-1}$
have optimal spectral gaps, see \cite{Alon86,Friedman03,Nilli91}
for example.

In \cite{Alon86}, Alon conjectured that a random $2r$--regular
graph on $n$ vertices, $G_{n,r},$ has a near--optimal spectral
gap (i.e. $\lambda\left(G_{n,r}\right)\le2\sqrt{2r-1}+\epsilon$)
with high probability (meaning with probability $\to1$ as $n\to\infty$).
Alon's conjecture was resolved by Friedman \cite{Friedman2008}.
\begin{thm}[Friedman]
If $G_{n,r}$ is a random $2r$--regular graph on $n$ vertices,
then for any $\epsilon>0,$
\[
\mathbb{P}\left[\lambda(G_{n,r})\le2\sqrt{2r-1}+\epsilon\right]\overset{n\to\infty}{\longrightarrow}1.
\]
 
\end{thm}

To sample a random $2r$--regular graph Friedman uses the permutation
model, taking $G_{n,r}$ to be a random Schreier graph associated
to the action of $r$ uniformly random permutations on $[n]$, denoted
$\mathrm{Sch}\left(S_{n}\curvearrowright[n],\sigma_{1},\dots,\sigma_{r}\right)$,
see \S\ref{subsec:Schreier-Graphs}. Questions remain about the expansion
properties of Cayley graphs of $S_{n}.$ These questions are very
different in the sense that the Cayley graphs have $n!$ vertices
rather than $n$, and to answer them, one must have some control over
\emph{all }non--trivial finite dimensional complex representations
of $S_{n}.$ Kassabov, in \cite{Kassabov}, proved that explicit generating
sets of a bounded size can be constructed such that the corresponding
Cayley graphs of $S_{n}$ form a family of bounded--degree expanders.
\begin{thm}[Kassabov]
There exist $L,\epsilon>0$ such that, for any $n$, there exists
an explicit generating set $X_{n}$ of $S_{n}$, of size at most $L$,
such that the Cayley graphs $\bigg\{\mathrm{Cay}\left(S_{n},X_{n}\right)\bigg\}_{n\geq1}$
form a family of $\epsilon$--expanders.
\end{thm}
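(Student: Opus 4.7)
The plan is to exploit the fact that, for $d\geq 3$, the groups $SL_d(\mathbb{Z})$ have Kazhdan's property $(T)$, which yields a uniform spectral gap for all finite quotients $SL_d(\mathbb{Z}/m\mathbb{Z})$ with respect to a fixed generating set coming from the $SL_d(\mathbb{Z})$ generators. The strategy is then to realize $S_n$ (or $A_n$) as being jointly generated, in a controlled way, by a bounded number of embedded copies of such matrix groups, and to combine the uniform expansion of each piece into a uniformly expanding generating set for $S_n$ itself.

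First I would partition $[n]$ into blocks and exploit the iterated wreath-product structure, using that $S_n$ contains large copies of $S_m \wr S_k$ where $mk$ is close to $n$. Choosing $m = p^d$ with $d\geq 3$ fixed and $p$ a prime, the symmetric group $S_m$ contains $SL_d(\mathbb{F}_p)$ via the natural affine/linear action on $\mathbb{F}_p^d$. I would then invoke expansion of $SL_d(\mathbb{F}_p)$ with a bounded generating set (derived from $(T)$ for $SL_d(\mathbb{Z})$ via reduction mod $p$) together with bounded generation in order to upgrade this to expansion for the block-diagonal copies embedded in $S_n$.

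Next I would apply a glueing lemma of the following shape: if subgroups $H_1,\dots,H_t\leq G$ satisfy $G = H_1 H_2 \cdots H_\ell$ for some bounded $\ell$, and each $H_i$ admits an $\epsilon$-expanding generating set of bounded size, then the union of these generating sets (together with a bounded number of additional ``connecting'' elements, such as certain transpositions or short cycles that ensure the full group is generated) is an $\epsilon'$-expanding generating set for $G$, with $\epsilon'$ depending only on $\epsilon$, $t$, and $\ell$. Carrying this out for carefully chosen matrix-group copies inside $S_n$, arranged both horizontally (across blocks) and through the wreath action on blocks, should produce the required generating set $X_n$ of size $\leq L$.

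The main obstacle will be the glueing step: translating the spectral gap of each $H_i$ into a spectral gap for $G$ requires either a relative-$(T)$-type statement or a direct combinatorial estimate on alternating random walks between the subgroups, and the constants must not degrade with $n$. A secondary difficulty is verifying that the chosen embeddings and connectors genuinely generate all of $S_n$, not just a proper subgroup, and handling arbitrary $n$ (rather than those of the special form $p^d\cdot k$) through a padding or case analysis argument so that the construction is genuinely uniform in $n$.
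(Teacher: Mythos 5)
The paper does not prove this statement; it is quoted as background from \cite{Kassabov}, so there is no internal proof to compare against. That said, your sketch does capture the broad architecture of Kassabov's actual argument: use property $(T)$ for higher-rank arithmetic lattices to get uniformly expanding bounded generating sets for $\mathrm{SL}_d$ over finite rings, embed such matrix groups into symmetric groups via linear actions, and then glue a bounded number of these copies (together with a few extra ``connector'' permutations) to generate all of $S_n$ with a uniform spectral gap.

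Two caveats on accuracy. First, the gluing lemma you invoke is not correct as stated for general $\ell$. The $\ell=2$ case is clean: if $G = H_1 H_2$ as a set product and $V$ is a unitary $G$-representation with $V^G=0$, then for any $v\in V^{H_1}$, $w\in V^{H_2}$ one has $\langle v, \rho(g)w\rangle = \langle v,w\rangle$ for all $g\in G$, hence $\langle v,w\rangle = \langle v, P_{V^G} w\rangle = 0$; so $V^{H_1}\perp V^{H_2}$, and a unit vector almost-fixed by both $H_1$ and $H_2$ forces a lower bound on the almost-invariance parameter. But for longer products the invariant subspaces of the intermediate factors are in general \emph{not} mutually orthogonal, and one needs a quantitative lower bound on the angles between them; obtaining such bounds is the substantive content of the relative-Kazhdan-constant arguments in \cite{Kassabov} and is not a formal corollary of the individual $H_i$ being expanders. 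Second, Kassabov does not simply embed $\mathrm{SL}_d(\mathbb{F}_p)$ for prime $p$: to cover all $n$ uniformly and to make the gluing tractable, he works with $\mathrm{SL}_{3d}$ over polynomial rings over $\mathbb{F}_2$ (to control sizes and to access explicit relative $(T)$ pairs), and he uses a rather delicate decomposition of $\mathrm{Alt}(n)$ as a bounded product of abelian subgroups on which relative Kazhdan constants can be estimated directly. Your own acknowledgment that the gluing step and the verification of full generation are the main obstacles is exactly right --- those \emph{are} the hard parts, and filling them in would essentially reproduce Kassabov's paper rather than supply a short alternative argument.
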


Whether or not this is typical behaviour, that is, whether or not
Cayley graphs of $S_{n}$ with a random generating set of fixed size
have a uniform spectral gap with high probability, remains an open
problem. Our main result below is in this direction. For any integers
$r,k>0,$ define $\mathcal{G}_{r}\left(n,k\right)$ to be the collection
of Schreier graphs $\mathrm{Sch}\left(S_{n}\curvearrowright[n]_{k},\sigma_{1},\dots,\sigma_{r}\right),$
where $[n]_{k}$ denotes the set of all $k$--tuples of distinct
elements in $[n]$ and $\sigma\left(i_{1},\dots,i_{k}\right)=\left(\sigma(i_{1}),\dots,\sigma(i_{k})\right).$ 
\begin{thm}
\label{thm:main theorem Graph}Fix any integer $r>1$ and let $\alpha<\frac{1}{20}.$
For any sequence of positive integers $\left(k_{n}\right)_{n\geq1}$
with $1\le k_{n}\leq n^{\alpha}$, let $\left(G_{n,r,k_{n}}\right)_{n\geq1}$
be a sequence of random $2r$--regular Schreier graphs, where for
each $n\geq1$, $G_{n,r,k_{n}}\in\mathcal{G}_{r}\left(n,k_{n}\right)$
is obtained by choosing $r$ i.i.d. uniformly random permutations,
$\sigma_{1},\dots,\sigma_{r}\in S_{n}.$ 

Then, for any $\epsilon>0,$
\[
\mathbb{P}\left[\lambda\left(G_{n,r,k_{n}}\right)\leq2\sqrt{2r-1}+\epsilon\right]\overset{n\rightarrow\infty}{\longrightarrow}1.
\]
\end{thm}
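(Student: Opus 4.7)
The plan is to reduce the spectral gap estimate on $G_n$ to a uniform spectral estimate on every non--trivial irreducible representation of $S_n$ whose Young diagram $\lambda$ satisfies $|\lambda|-\lambda_1\le k_n$, and then handle these representations via the trace/moment method using the character bound advertised in the abstract.

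First I would decompose the adjacency operator. The space $\mathbb{C}[n]_{k_n}$ is isomorphic as an $S_n$--module to $\mathrm{Ind}_{S_{n-k_n}}^{S_n}\mathbf{1}$, since the stabiliser of a fixed $k_n$--tuple of distinct elements is $S_{n-k_n}$. Frobenius reciprocity and the branching rule then show that the irreducible $V^\lambda$ appears in this induced module precisely when $\lambda_1\ge n-k_n$, i.e.\ $k:=|\lambda|-\lambda_1\le k_n$. Writing $T_\lambda:=\sum_{i=1}^{r}\bigl(\rho^\lambda(\sigma_i)+\rho^\lambda(\sigma_i^{-1})\bigr)$, the $\lambda=(n)$ component contributes the Perron eigenvalue $2r$, and a standard isotypic argument gives
\[
\lambda(G_n)\;\le\;\max_{\substack{\lambda\vdash n,\ \lambda\ne(n)\\|\lambda|-\lambda_1\le k_n}}\bigl\|T_\lambda\bigr\|.
\]
It therefore suffices to show that this maximum is at most $2\sqrt{2r-1}+\epsilon$ with probability tending to $1$.

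Next, for each such $\lambda$ I would apply the trace method: since $T_\lambda$ is self--adjoint, for any even integer $p$,
\[
\mathbb{E}\bigl\|T_\lambda\bigr\|^{p}\;\le\;\mathbb{E}\,\mathrm{tr}\bigl(T_\lambda^{p}\bigr)\;=\;\sum_{w\in W_p}\mathbb{E}\bigl[\chi^\lambda\bigl(w(\sigma_1,\ldots,\sigma_r)\bigr)\bigr],
\]
where $W_p$ is the set of length--$p$ words in the alphabet $\{\sigma_i^{\pm1}\}$ (so $|W_p|=(2r)^p$). The paper's expected--character estimate $\mathbb{E}[\chi^\lambda(w(\sigma))]=O(n^{-k})$, combined with $\dim V^\lambda=O(n^k)$, exactly balances; organising the word sum by the cyclic reduction of $w$ and using the Kesten count of non--backtracking closed walks on the $2r$--regular tree should extract the $(2\sqrt{2r-1})^p$ scaling. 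A union bound over the sub--exponentially many admissible $\lambda$, together with Markov's inequality at a carefully chosen $p$ growing with $n$, would then complete the deduction.

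The hard part will be pushing the character estimate far enough to let $k_n$ grow as large as $n^{1/12-\epsilon}$. A plain bound of size $O(n^{-k})$, uniform only over fixed $w$, is useless here: what is needed is control with explicit dependence on the word length $|w|$, on $k$, and on the sub--leading $1/n$ corrections, since the sum over $W_p$ and the union bound over partitions can otherwise overwhelm the main term. This is precisely where the extension of Wise's $w$--cycle conjecture enters, feeding into the combinatorics of word maps on $S_n$; the threshold $\alpha<1/12$ should correspond exactly to where the worst--case sub--leading contributions cease to dominate the product $(2r)^p\cdot\dim V^\lambda\cdot n^{-k}$.
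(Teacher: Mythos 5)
Your reduction to irreducible $S_n$--representations with at most $k_n$ boxes below the first row is correct and matches the paper's setup (the paper embeds $\mathbb{C}[n]_{k_n}$ in $(\mathbb{C}^n)^{\otimes k_n}$ rather than invoking Frobenius reciprocity, but the two decompositions land in the same place). From there, however, the paper does \emph{not} run the direct trace/moment method you sketch. Instead it passes through Theorem \ref{thm:Main theorem strong convergence} and the strong--convergence machinery of \cite{CGVTvH2024} and \cite{MAgeeDelaSalle}: the criterion of Proposition \ref{prop: criterion for strong convergence}, which asks for a tempered comparison function $u_n$ and a bound $|\mathbb{E}\mathrm{Tr}(\pi_n(x))-u_n(x)|\leq\epsilon_n\exp(q/\log(2+q)^2)\|x\|_{C^*(F_r)}$ for all $x\in\C_{\leq q}[F_r]$. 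The heavy lifting is done by showing that $\frac{1}{n^k}\mathbb{E}_w[\mathrm{Tr}_{\Sigma_{n,k}}]$ equals a rational function $\phi_w(1/n)$ with a universal denominator $g_{q,k}$ and controlled degree (Theorem \ref{thm: word map polynomial theorem}), and then extracting the tempered part by Taylor--expanding $\phi_x$ at $0$ with error controlled through Lemma \ref{lem: bounding sup pf kth derivative of polynomial } (a Markov--type inequality for polynomials). The bound $\mathbb{E}_w[\chi^{\lambda^+(n)}]=O(n^{-k})$ of Theorem \ref{thm: word map main theorem} enters only as the statement that the first $2k$ Taylor coefficients of $\phi_w$ at $0$ vanish for non--powers, which is what makes the comparison function tempered.

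The gap in your proposal is precisely the issue you flag but do not resolve. In a direct moment count at scale $p\sim\log n$, the sum over $W_p$ has $(2r)^p=n^{\Theta(1)}$ terms, $\dim V^\lambda\sim n^k$, and the ``$O(n^{-k})$'' per word hides constants that grow with $|w|$ and $k$ (in the paper these are visible in the factors $(C^2 k^{12+\epsilon})^k$ and the degree bound $D_q=3kq+q^2$). Making the union bound and Markov step at growing $p$ close requires exactly the kind of uniform--in--$(|w|,k,n)$ control that the polynomial method delivers automatically, and which a bare $O(n^{-k})$ statement does not. The $\alpha<1/12$ threshold in the paper comes out of Lemma \ref{lem: bound on sup h(2k,x)exp(-q)}, i.e.\ from optimising $h(2k,q)\exp(-q/\log(2+q)^2)$ where $h$ is built from the degree bound $D_q$; it is not visible from the heuristic balance $(2r)^p\cdot\dim V^\lambda\cdot n^{-k}$ alone. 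So as written, your outline would require essentially redeveloping the quantitative core of \S\ref{sec: Proof of strong convergence}--\ref{sec: proof of word map theorem} inside the moment method, and it is not clear that this can be pushed to $k_n\le n^{1/12-\epsilon}$ without the rational--function structure. You should at least cite Theorem \ref{thm: word map polynomial theorem} as the missing quantitative input and explain how the Kesten/non--backtracking count would be combined with it.
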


One sees that $[n]_{n}=S_{n}$ and so, in this case, a randomly sampled
Schreier graph is simply a random Cayley graph of $S_{n}$ of degree
$2r$. Thus, the above result for $\alpha=1$ ($k_{n}=n)$ would imply
that, for each fixed $r>1,$ not only are random $2r$--regular Cayley
graphs of $S_{n}$ uniform expanders with high probability, but that
they have a near--optimal spectral gap with high probability. As
we mentioned above, this question is still wide open, but we include
this discussion for motivation. 

Previously, Theorem \ref{thm:main theorem Graph} was known for tuples
of size $1$ (i.e. with $\alpha=0)$, which is exactly Friedman's
theorem \cite{Friedman2008} (see also the shorter proof given by
Bordenave \cite{Bordenave2020}); for pairs of distinct elements by
Bordenave and Collins \cite{BordenaveCollins2018} and, more recently,
for $k$--tuples for any \emph{fixed} $k$, by Chen, Garza--Vargas,
Tropp and van Handel in \cite{CGVTvH2024}. One can then consider
Theorem \ref{thm:main theorem Graph} as progress towards bridging
the substantial gap between these results and the open questions regarding
the spectral gap of random fixed--degree Cayley graphs of $S_{n}$.
Indeed, we now know that the random regular Schreier graphs described
above have a near--optimal spectral gap with high probability when
the size $k$ of the tuple is allowed to grow with $n$, and we can
even take $k$ to grow \emph{polynomially }with $n$. 
\begin{rem}
Prior to the result for fixed $k$ in \cite{CGVTvH2024}, it was previously
known that for any fixed $k,$ a random $2r$--regular Schreier graph
of $S_{n}$ has a uniform spectral gap with high probability. Indeed,
in \cite[Theorem 2.1]{Friedmanetal}, they prove that, for any fixed
$k>0$ and $r>1$, if $G_{n,r,k}\in\mathcal{G}_{r}\left(n,k\right)$
is a random $2r$--regular Schreier graph, then for any $\epsilon>0,$
\[
\mathbb{P}\left[\lambda\left(G_{n,r,k}\right)\leq(1+\epsilon)2r\left(\frac{\sqrt{2r-1}}{r}\right)^{\frac{1}{r+1}}\right]\overset{n\to\infty}{\longrightarrow}1.
\]
\end{rem}

\begin{rem}
\label{rem: ramon remark 1}A comment on `derandomization'. In \cite{Mohanty},
using Bordenave's proof of Friedman's theorem, they give a construction
of a random regular graph on $O(N)$ vertices, that has a near--optimal
spectral gap with high probability (probability $\geq1-O\left(\frac{\log N}{2^{100\left(\log N\right)^{\frac{1}{4}}}}\right)$).
They obtain such a graph by randomly choosing a `seed' $s\in\{0,1\}^{O\left(\log(N)^{2}\right)}$
and using this as the input to an algorithm. That is, their construction
requires $\sim\log(N)^{2}$ random bits. With the construction in
Theorem \ref{thm:main theorem Graph}, one can construct a regular
graph on $N$ vertices with a near--optimal spectral gap with high
probability (probability $\geq1-O\left(\frac{1}{\left(\log\left(N\right)^{20}\right)^{1-(20+\epsilon)\alpha}}\right)$),
using $\sim\log(N)^{20}$ random bits, by sampling random permutations
$\sigma_{1},\dots,\sigma_{r}\in S_{n}$, with $n\sim\log(N)^{20}$.
Although this requires slightly more random bits than the algorithm
given in \cite{Mohanty}, it is a more algebraic construction and
illustrates a different approach to such a problem. \footnote{We thank Ramon van Handel for pointing out this application. }
\end{rem}

Our proof of Theorem \ref{thm:main theorem Graph} relies on the remarkable
new approach to \emph{strong convergence }detailed in \cite{CGVTvH2024}
referred to as the `polynomial method', see \S\ref{subsec:Strong-Convergence},
as well as the additional criterion for temperedness of \emph{arbitrary}
functions on finitely generated groups $\Gamma$ with a finite fixed
generating set (an adaptation of the classical notion of a tempered
representation) given by Magee and de la Salle in \cite{MAgeeDelaSalle}. 

To use this approach effectively, we prove our other main theorem
pertaining to word maps and stable irreducible characters (see \S\ref{subsec:Word-maps}),
in which we prove a new bound on the expected stable irreducible character
of a $w$--random permutation. Given a Young diagram $\lambda\vdash k$
and $n\geq k+\lambda_{1},$ we define the Young diagram $\lambda^{+}(n)=\left(n-k,\lambda\right)\vdash n$,
see \S\ref{subsec:Symmetric-group}.
\begin{thm}
\label{thm: word map main theorem}Let $w\in F_{r}=\langle x_{1},\dots,x_{r}\rangle$
be a word in the free group on $r$ generators with $w\neq e$. Fix
$k\in\mathbb{Z}_{>0}$ and $\lambda\vdash k$. If $w$ is not a proper
power, then 
\[
\mathbb{E}_{w}\left[\chi^{\lambda^{+}(n)}\right]\overset{\mathrm{def}}{=}\mathop{\mathop{\mathbb{E}}_{\sigma_{1},\dots,\sigma_{r}\in S_{n}}\left[\chi^{\lambda^{+}(n)}\left(w\left(\sigma_{1},\dots,\sigma_{m}\right)\right)\right]}=\left(\frac{1}{\dim\chi^{\lambda^{+}(n)}}\right)=O\left(\frac{1}{n^{k}}\right).
\]
\end{thm}

\begin{rem}
\label{rem: main bound holds for proper powers}That $\mathbb{E}_{w}\left[\chi^{\lambda^{+}(n)}\right]$
can be written as a rational expression in $n$ follows from (\ref{eq: expected character with pi, weingarten and N})
or indeed by combining \cite{Nica,LinialPuder} with \cite[Proposition B.2]{HananyPuder}.
In the case of $w$ being a proper power, the conclusion of Theorem
\ref{thm: word map main theorem} follows from \cite{Nica} and \cite[Section 4]{LinialPuder}. 
\end{rem}

\begin{rem}
We prove Theorem \ref{thm: word map main theorem} using `combinatorial
integration'. We use a projection formula obtained in our previous
paper \cite{Cassidy2023} to translate the computation of $\mathbb{E}_{w}\left[\chi^{\lambda^{+}(n)}\right]$
in to combinatorial problem. We then prove an extension of Wise's
$w$--cycles conjecture (see \cite{LouderWilton,HelferWise,Wise}),
a theorem in low--dimensional topology, to obtain our final bound. 
\end{rem}

Theorem \ref{thm: word map main theorem} generalizes the work of
Nica \cite{Nica} and Linial and Puder \cite{LinialPuder}, where
it is established that, if $\#\mathrm{fix}(\sigma)$ is the number
of fixed points of a permutation $\sigma\in S_{n},$ then, if $w$
is not a proper power,
\begin{equation}
\mathbb{E}_{w}\left[\#\mathrm{fix}(\sigma)-1\right]=O\left(\frac{1}{n}\right).\label{eq: expression for E (fix -1)}
\end{equation}
Of course, $\#\mathrm{fix}-1$ is the character of the $(n-1)$--dimensional
standard representation of $S_{n},$ $\chi^{(n-1,1)}$, so that this
is a special case of our Theorem \ref{thm: word map main theorem}.\footnote{It is exactly this result of Nica that is used in \cite{CGVTvH2024}
to give a substantially shorter proof of Friedman's theorem.} In \cite{PuderParzanchewski2012}, Puder and Parzanchevski prove
that this decay is actually much faster depending on an algebraic
property of the fixed word $w.$ They show that 
\[
\mathbb{E}_{w}\left[\#\mathrm{fix}(\sigma)-1\right]=O\left(\frac{1}{n^{\pi(w)-1}}\right)
\]
where $\pi(w)$ is the \emph{primitivity rank }of the word $w$ (see,
for example \cite[Definition 1.7]{PuderParzanchewski2012}\emph{,
}for the definition). Hanany and Puder \cite[Theorem 1.3]{HananyPuder}
generalize this result to other stable irreducible representations
of $S_{n}.$ They show that, if $w$ is not a proper power, then for
any $k\neq0,1$ and for any $\lambda\vdash k,$ 
\[
\mathbb{E}_{w}\left[\chi^{\lambda^{+}(n)}\right]=O\left(\frac{1}{n^{\pi(w)}}\right).
\]
 Theorem \ref{thm: word map main theorem} is thus an improvement
on this bound in the regime $k>\pi(w).$ Perhaps more importantly
though, it gives a bound that \emph{improves }when considering stable
irreducible representations of higher dimensions. 

In \cite[Conjecture 1.8]{HananyPuder}, it is conjectured that 
\[
\mathbb{E}_{w}\left[\chi^{\lambda^{+}(n)}\right]=O\left(\frac{1}{\left(\dim\chi^{\lambda^{+}(n)}\right)^{\pi(w)-1}}\right)=O\left(\frac{1}{n^{k(\pi(w)-1)}}\right).
\]
So Theorem \ref{thm: word map main theorem} confirms the conjecture
for words $w$ with primitivity rank $\pi(w)=2.$ In cases where there
is no further information on $w$ other than $w\neq e$ and $w$ is
not a proper power, the bound given in Theorem \ref{thm: word map main theorem}
is expected to be sharp. In fact, it has recently been conjectured
by Puder and Shomroni (see \cite[Conjecture 1.2]{PuderShomroni})
that we should have 
\[
\mathbb{E}_{w}\left[\chi^{\lambda^{+}(n)}\right]=O\left(\frac{1}{n^{k(\mathrm{s}\pi(w))}}\right),
\]
where $\mathrm{s}\pi(w)$ is a property of $w$ known as the \emph{stable
primitvity rank }(this was first introduced by Wilton in \cite{Wilton},
indeed see Definition 10.6 in (\emph{ibid.}) or \cite[Definition 4.1]{PuderShomroni}
for an equivalent definition). They also conjecture that this bound
is the best one possible. That is, for every word $w\in F_{r}$, for
every $k,$ there is a Young diagram $\lambda\vdash k$ such that
$\frac{1}{n^{k(\mathrm{s}\pi(w))}}=O\left(\mathbb{E}_{w}\left[\chi^{\lambda^{+}(n)}\right]\right).$
An additional conjecture of Wilton (see e.g. \cite[Conjecture 4.7]{PuderShomroni})
posits that, for any $w\in F_{r},$ we have the equality $\mathrm{s}\pi(w)=\pi(w)-1$,
so that the conjecture of Hanany and Puder is a combination of these
two conjectures. 

\subsection{Strong Convergence\label{subsec:Strong-Convergence}}

As we noted earlier, we actually prove a stronger statement than Theorem
\ref{thm:main theorem Graph}, that of strong convergence of random
\emph{permutation} representations of $F_{r}=\left\langle x_{1},\dots,x_{r}\right\rangle $. 
\begin{defn}
\label{def: strong convergence}Given a sequence of random, finite
dimensional, unitary representations $\{\pi_{n}:F_{r}\to U\left(N_{n}\right)\}_{n\geq1},$
we say that $\pi_{n}$ strongly converge to the left regular representation
$\lambda:F_{r}\to U\left(\ell^{2}\left(F_{r}\right)\right)$ asymptotically
almost surely (a.a.s.) if, for any $z\in\C\left[F_{r}\right]$, for
any $\epsilon>0$, we have 
\begin{equation}
\mathbb{P}\left[\big|\left\Vert \pi_{n}(z)\right\Vert -\left\Vert \lambda(z)\right\Vert \big|<\epsilon\right]\overset{n\to\infty}{\longrightarrow}1.\label{eq: strong convergence}
\end{equation}
The norms here are operator norms, see \S\ref{subsec: C* algebras of free groups}. 
\end{defn}

We are interested in the cases whereby the representations $\pi_{n}$
factor as $\pi_{n}=\rho\circ\phi_{n},$ where $\phi_{n}\in\hom\left(F_{r},S_{n}\right)$
is random and $\rho:S_{n}\to\mathrm{End}\left(V\right)$ is a representation
of $S_{n}$ of dimension $N_{n}.$ The power of strong convergence
in this setting is that, since the convergence must hold for \emph{every}
$z\in\C\left[F_{r}\right]$, it allows us to prove results like Theorem
\ref{thm:main theorem Graph}, which require (\ref{eq: strong convergence})
to hold only for specific elements of $\C\left[F_{r}\right].$

For example, Friedman's Theorem combined with the Alon--Boppana bound
can be written 
\begin{equation}
\mathbb{P}\left[\big|\left\Vert \pi_{n}(z)\right\Vert -2\sqrt{2r-1}\big|<\epsilon\right]\overset{n\to\infty}{\longrightarrow}1\label{eq: convergence of adjacency matrix, standard rep}
\end{equation}
 where 
\begin{equation}
z=x_{1}+x_{1}^{-1}+\dots+x_{r}+x_{r}^{-1}\label{eq: adjacency matrix graph}
\end{equation}
 and $\pi_{n}=\rho^{(n-1,1)}\circ\phi_{n}$, with $\phi_{n}\in\hom\left(F_{r},S_{n}\right)$
uniformly random (i.e. obtained by uniformly randomly choosing a permutation
$\sigma_{i}\in S_{n}$ for each $i$ and mapping $x_{i}\mapsto\sigma_{i}$)
and where 
\[
\rho^{(n-1,1)}:\mathbb{C}\left[S_{n}\right]\to\mathrm{End}\left(V^{(n-1,1)}\right)
\]
 is the standard representation of $S_{n}.$ This is because, if $\rho:S_{n}\to\mathrm{End}\left(\C^{n}\right)$
is the defining representation of $S_{n},$ then
\[
\rho\left(\sigma_{1}+\sigma_{1}^{-1}+\dots+\sigma_{r}+\sigma_{r}^{-1}\right)
\]
is the adjacency matrix of the graph $\mathrm{Sch}\left(S_{n}\curvearrowright[n],\sigma_{1},\dots,\sigma_{r}\right)$
and $2\sqrt{2r-1}=\|\lambda(z)\|$ in this case. We consider the orthogonal
complement to the $S_{n}$--invariant vectors in $\C^{n}$ to account
for the trivial eigenvalue $2r.$ In the case of $\rho$ above, that
is the standard representation. 

More generally, for any $k\in\mathbb{Z}_{>0}$ we define 
\[
\bar{\rho}_{n,k}:\mathbb{C}\left[S_{n}\right]\to\mathrm{End}\left(\cnk\right),
\]
 the $k^{\mathrm{th}}$ tensor power of the defining representation.
We then define 
\[
\rho_{n,k}:\mathbb{C}\left[S_{n}\right]\to\mathrm{End}\left(\cnk_{\perp\mathrm{triv}}\right)
\]
to be the restriction of $\bar{\rho}_{n,k}$ to the orthogonal complement
to the $S_{n}$--invariant vectors, and define a random sequence
of unitary representations of $F_{r},$ $\{\pi_{n,k}\overset{\mathrm{def}}{=}\rho_{n,k}\circ\phi_{n}\}_{n\ge1},$
with $\phi_{n}\in\hom\left(F_{r},S_{n}\right)$ uniformly random as
before. 

In this language, Bordenave and Collins \cite{BordenaveCollins2018}
proved strong convergence to the left regular representation $\lambda:F_{r}\to U\left(\ell^{2}\left(F_{r}\right)\right)$
a.a.s. for the sequence of random representations of $F_{r}$, $\{\pi_{n,1}\}_{n\ge1}$.
They extend their results to show strong convergence a.a.s. for $\{\pi_{n,2}\}_{n\geq1}$.
With the new `polynomial method' developed in \cite{CGVTvH2024},
strong convergence a.a.s. for $\{\pi_{n,k}\}_{n\geq1}$ for any fixed
$k$ is now known. We extend their method and combine with analytic
results of Magee and de la Salle \cite{MAgeeDelaSalle} to prove our
final main theorem below.
\begin{thm}
\label{thm:Main theorem strong convergence}For any $\alpha<\frac{1}{20}$,
for any $z\in\C\left[F_{r}\right]$ and for any $\epsilon>0,$ 
\[
\mathbb{P}\left[\sup_{k_{n}\leq n^{\alpha}}\bigg|\left\Vert \pi_{n,k_{n}}(z)\right\Vert -\left\Vert \lambda(z)\right\Vert \bigg|<\epsilon\right]\overset{n\to\infty}{\longrightarrow}1.
\]
 
\end{thm}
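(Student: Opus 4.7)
The plan is to reduce Theorem \ref{thm:Main theorem strong convergence} to the character estimate of Theorem \ref{thm: word map main theorem}, using the strong-convergence machinery of \cite{CGVTvH2024} together with the temperedness criterion of \cite{MAgeeDelaSalle} to pass from expected characters to operator norms. First I would decompose the representation $\rho_{n,k_{n}}$ into its $S_{n}$--isotypic components: by the Schur--Weyl/partition-algebra duality developed in \cite{Cassidy2023}, $V_{n,k_{n}}$ is a direct sum of copies of the irreducibles $V^{\lambda^{+}(n)}$, where $\lambda$ ranges over partitions with $1\leq|\lambda|\leq k_{n}$. Since $\|\pi_{n,k_{n}}(z)\|$ is the maximum over such $\lambda$ of $\|(\rho^{\lambda^{+}(n)}\circ\phi_{n})(z)\|$, it suffices to show, uniformly in $\lambda$ with $|\lambda|\leq n^{\alpha}$, that
\[
\|(\rho^{\lambda^{+}(n)}\circ\phi_{n})(z)\|\leq\|\lambda(z)\|+\epsilon
\]
with very high probability.

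Next I would feed Theorem \ref{thm: word map main theorem} into the moment machinery of \cite{CGVTvH2024}. That framework reduces the operator-norm bound above to controlling $\mathbb{E}[\mathrm{Tr}\,(\rho^{\lambda^{+}(n)}\circ\phi_{n})(p)]$ for explicit test polynomials $p$ in the generators $x_{i}^{\pm1}$. Expanding such a trace over the monomials $w$ of $p$ turns each contribution into an expected character $\mathbb{E}_{w}[\chi^{\lambda^{+}(n)}]$, and Theorem \ref{thm: word map main theorem} supplies the decay $O(n^{-|\lambda|})$ for non-trivial non-proper-power $w$; the proper-power case reduces to a primitive root as in Remark \ref{rem: main bound holds for proper powers}. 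Combined with $\dim V^{\lambda^{+}(n)}=\Theta(n^{|\lambda|})$, this makes the normalised expected trace $O(n^{-2|\lambda|})$ off the identity, matching the free-group trace of a non-trivial word, which is zero.

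To promote fixed-$k_{n}$ convergence to the supremum statement, I would combine a union bound over the $\lfloor n^{\alpha}\rfloor$ possible values of $k_{n}$ and the (subexponentially-in-$n^{\alpha}$ many) partitions $\lambda$ with $|\lambda|\leq n^{\alpha}$, using the concentration estimates already built into the CGVTvH framework to obtain per-$\lambda$ tail bounds strong enough to survive these union bounds. The main obstacle I expect is quantitative: the implicit constants in Theorem \ref{thm: word map main theorem} and in the polynomial-method bounds of \cite{CGVTvH2024} must be controlled uniformly as $|\lambda|$ and the polynomial degree grow with $n$. The combinatorial prefactors arising from the surface bookkeeping in the extension of Wise's $w$--cycle conjecture that underlies Theorem \ref{thm: word map main theorem}, together with the degree-dependent cost of the polynomial method, have to be balanced against the negative powers of $n$ they save; this balance is precisely what should produce the explicit threshold $\alpha<\tfrac{1}{12}$.
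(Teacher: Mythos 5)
Your high-level architecture --- decompose $V_{n,k_n}$ into the irreducibles $V^{\lambda^{+}(n)}$, control expected traces of words through the stable character bound, run the strong-convergence machinery of \cite{CGVTvH2024}/\cite{MAgeeDelaSalle}, and finish with a union bound over $k\leq n^{\alpha}$ --- is the same as the paper's. The genuine gap is exactly at the point you flag as ``the main obstacle'' and then leave to ``concentration estimates already built into the CGVTvH framework'': no such estimates exist uniformly in $k$ growing like $n^{\alpha}$, and Theorem \ref{thm: word map main theorem} cannot supply them, since its implied constant depends on $k$ and $l(w)$ in an uncontrolled way ($\ll_{k,l(w)}$). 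Your intermediate claim that the decay $O(n^{-|\lambda|})$ together with $\dim V^{\lambda^{+}(n)}=\Theta(n^{|\lambda|})$ makes the normalised expected trace ``match the free-group trace'' is only a weak-convergence statement and, as stated, is not even uniform in $k$; it does not by itself produce any per-$\lambda$ tail bound.

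The paper never uses the $O(n^{-k})$ bound quantitatively. What it extracts from Theorem \ref{thm: word map main theorem} is qualitative: combined with the rational-function structure of Theorem \ref{thm: word map polynomial theorem} (numerator of degree $\leq 3kq+q^{2}$ over the explicit denominator $g_{q,k}$), it gives the vanishing of the first $2k$ Taylor coefficients at $0$ of $\phi_{w}$, where $\phi_{w}(1/n)=n^{-k}\mathbb{E}_{w}[\mathrm{Tr}_{\Sigma_{n,k}}]$ (Part \ref{enu:word map rewritten part 3} of Theorem \ref{thm: word map thm rewritten}), for $w$ not a proper power. All quantitative control then comes from the trivial bound $|\mathbb{E}_{x}[\mathrm{Tr}_{\Sigma_{n,k}}]|\leq n^{k}\|x\|_{C^{*}(F_{r})}$ plus the Markov-type polynomial inequality applied to $g_{q,k}\phi_{x}$ (Lemma \ref{lem:sup of phi_x^(i)}), yielding derivative bounds $h(i,q)=4\bigl(CD_{q}^{4}/i^{2}\bigr)^{i}$ with $D_{q}=3kq+q^{2}$. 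Taylor expansion to order $2k$ then approximates $\mathbb{E}\,\mathrm{Tr}\,\Pi_{n,k}(x)$ by $\dim(\Sigma_{n,k})\tau(x)+\sum_{i<k}\psi_{i}(x)n^{-i}$ with error $(C^{2}k^{12+\epsilon})^{k}n^{-k}\exp\bigl(q/\log^{2}(2+q)\bigr)\|x\|_{C^{*}(F_{r})}$ (Lemma \ref{lem: trace SIGMA - tau - taylor series}); the functionals $\psi_{i}$ are supported on proper powers, hence tempered via Propositions \ref{prop: criterion for being tempered} and \ref{prop: bound on prob that random walk ends on power}, so Proposition \ref{prop: criterion for strong convergence} applies with $\epsilon_{n}=(C^{2}/n^{1-(12+\epsilon)\alpha})^{k}$, whose geometric decay in $k$ is what survives the union bound. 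In particular the threshold $\tfrac{1}{12}$ comes from optimising $h(2k,q)\exp(-q/\log^{2}(2+q))$ over $q$ --- i.e.\ from the degree bound $D_{q}$ and the cost of the polynomial method --- not from balancing ``combinatorial prefactors'' of Theorem \ref{thm: word map main theorem}, which never enter. Without the rational/degree structure, the coefficient functionals $\psi_{i}$, and their temperedness, your outline cannot be completed as stated; a smaller omission is that you only address the upper bound on $\|\pi_{n,k_{n}}(z)\|$, while the matching lower bound is obtained from \cite[Lemma 5.14]{MAgeeDelaSalle}.
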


A simple corollary of the above theorem is that one can establish
the strong convergence a.a.s. to the left regular representation of
$\rho\circ\phi_{n}$, for any non--trivial irreducible representation
$\rho$ of $S_{n},$ provided $\dim\rho$ grows at most quasi--exponentially
in $n$. This is a vast improvement on the previous results in \cite{CGVTvH2024},
whereby this fact is established for $\dim\rho$ growing at most polynomially
in $n$ (for polynomials of any fixed degree). 
\begin{cor}
\label{cor: strong convergence corollar}For any $\alpha<\frac{1}{20}$,
for any $z\in\C\left[F_{r}\right]$ and for any $\epsilon>0,$ 
\[
\mathbb{P}\left[\sup_{\overset{\rho}{{\scriptscriptstyle \dim\rho\le C\exp\left(n^{\alpha}\right)}}}\bigg|\left\Vert \rho\circ\phi_{n}(z)\right\Vert -\left\Vert \lambda(z)\right\Vert \bigg|<\epsilon\right]\overset{n\to\infty}{\longrightarrow}1.
\]
where the supremum is over all non--trivial irreducible representations
of $S_{n}$.
\end{cor}

\begin{proof}
See the proof in \cite[Theorem 5.8]{vanHandelSURVEY}.
\end{proof}
\begin{rem}
Theorem \ref{thm:main theorem Graph} follows immediately from Theorem
\ref{thm:Main theorem strong convergence} by taking $z=x_{1}+x_{1}^{-1}+\dots+x_{r}+x_{r}^{-1}.$
Theorem \ref{thm:Main theorem strong convergence} is a much stronger
statement, and it is certainly not necessary prove such a statement
to prove Theorem \ref{thm:main theorem Graph}.
\end{rem}

\subsection{Overview of paper}

\S\ref{sec:Background} gives background information on variations
of Schur--Weyl duality, including a refinement of Schur--Weyl--Jones
duality and a projection formula given by the author in \cite{Cassidy2023}.
We also give a brief overview of the Weingarten calculus for the symmetric
group and a number of necessary results from \cite{MAgeeDelaSalle}.

\S\ref{sec: Proof of strong convergence} contains our proof of Theorem
\ref{thm:Main theorem strong convergence}, assuming Theorem \ref{thm: word map main theorem}
and Theorem \ref{thm: word map polynomial theorem}. The proofs of
Theorem \ref{thm: word map main theorem} and Theorem \ref{thm: word map polynomial theorem}
are given in \S\ref{sec: proof of word map theorem}. 

\subsection*{Acknowledgements}

This paper is part of a project that received funding from the European
Research Council (ERC) under the European Union\textquoteright s Horizon
2020 research and innovation programme (grant agreement No 949143). 

We thank Michael Magee for the many conversations regarding this paper,
as well as Doron Puder, Lars Louder and Ramon van Handel for their
valuable input.

\section{Background\label{sec:Background}}

\subsection*{A note on notation}

We will use the following Vinogradov notation. Given two functions
$f,g$ of $n\in\mathbb{Z}_{>0},$ we will write $f\ll g$ to indicate
that that there exist constants $C$ and $N$ such that $f(n)\leq Cg(n)$
for all $n\geq N.$ We will then write $f=O(g)$ to mean that $f\ll|g|$
and $f\asymp g$ to mean that $f\ll g$ \emph{and }$f\ll g.$ If the
implied constants depend on some additional parameter, then these
will be added as subscripts. 

\subsection{Symmetric group\label{subsec:Symmetric-group}}

We will denote by $S_{n}$ the symmetric group on $n$ elements and
view this as the group of permutations of $[n]\overset{\mathrm{def}}{=}\{1,\dots,n\}$. 

A \emph{Young diagram }(YD) $\mu=\left(\mu_{1},\dots,\mu_{l(\mu)}\right)\vdash n$
is an arrangement of boxes in to aligned rows, in which row $i$ contains
$\mu_{i}$ boxes. By definition, we have $\mu_{1}\geq\mu_{2}\geq\dots\geq\mu_{l(\mu)}>0,$
so that the number of boxes in each row is non--increasing as the
row index increases. We also have $\mu_{1}+\dots+\mu_{l(\mu)}=|\mu|=n.$

The equivalence classes of finite dimensional, complex irreducible
representations of $S_{n}$ are in a natural bijection with the set
of Young diagrams $\mu\vdash n,$ see \cite{VershikOkounkov} for
example. We will denote the representation corresponding to $\mu$
by $V^{\mu}$ and denote its character by $\chi^{\mu}.$ Every finite--dimensional
complex representation $V$ of $S_{n}$ admits a decomposition 
\[
V\cong\bigoplus_{\mu\vdash n}\left(V^{\mu}\right)^{\oplus a_{\mu}},
\]
unique up to isomorphism (i.e. unique up to isomorphisms that preserve
each component $\left(V^{\mu}\right)^{\oplus a_{\mu}}$ of the decomposition).
For each $\mu\vdash n,$ the subspace $\left(V^{\mu}\right)^{\oplus a_{\mu}}$
is called the $V^{\mu}$--isotypic component of $V.$ 

We will denote by $d_{\mu}\overset{\mathrm{def}}{=}\chi^{\mu}(e)=\mathrm{dim}\left(V^{\mu}\right)$$.$
Given a representation $V$ of $S_{n}$ and a permutation $\sigma,$
we may write $V(\sigma)$ or just $\sigma$ to mean the element of
$\mathrm{End}\left(V\right)$ corresponding to $\sigma,$ defined
by the representation. Given $\lambda=(\lambda_{1},\dots,\lambda_{l(\lambda)}),$
, we define $V^{\lambda^{+}(n)}$ to be the irreducible representation
of $S_{n}$ (when $n-|\lambda|>\lambda_{1}$) corresponding to the
YD 
\[
\lambda^{+}(n)=\left(n-|\lambda|,\lambda_{1},\dots,\lambda_{l(\lambda)}\right)\vdash n.
\]

\subsection{Partition algebra\label{subsec:Partition-algebra}}

The \emph{partition algebra }appeared in the work of Martin \cite{Martin1994},
Martin and Saleur \cite{MartinSaleur1992} and Jones \cite{Jones}
as a generalization of the Temperley--Lieb algebra and the Potts
model in statistical mechanics. It is closely connected to the symmetric
group via Schur--Weyl--Jones duality, which was first detailed explicitly
by Jones in \cite{Jones}.

\subsubsection{Set partitions}

A set partition of the set $[m]$ is a grouping of the elements in
to non--empty, disjoint subsets in which every element is in exactly
one subset. We will write $i\sim j$ to indicate that $i$ and $j$
are in the same subset of a partition, and refer to the subsets as
\emph{blocks}. The number of blocks of a partition $\pi\in\mathrm{Part}\left([m]\right)$
will be denoted $|\pi|.$ Obviously, $1\leq|\pi|\leq m$. 

We denote by $\mathrm{Part}\left([m]\right)$ the set of set partitions
of $[m]$. One may define a natural partial ordering on $\mathrm{Part}\left([m]\right)$
by refinement: $\pi_{1}\leq\pi_{2}$ if every block of $\pi_{1}$
is contained in a block of $\pi_{2}.$ We say that $\pi_{1}$ is finer
than $\pi_{2}$ or that $\pi_{2}$ is coarser than $\pi_{1}.$ From
this partial ordering one may derive the M{\"o}bius function $\mu$
on $\mathrm{Part}\left([m]\right)$. If $\pi_{2}=\Big\{\mathcal{P}_{1},\dots,\mathcal{P}_{|\pi_{2}|}\Big\}$
and $\pi_{1}$ is a refinement of $\pi_{2}$ in which each block $\mathcal{P}_{i}$
is split in to $b_{i}$ subsets, then 
\[
\mu\left(\pi_{1},\pi_{2}\right)=(-1)^{|\pi_{1}|-|\pi_{2}|}\prod_{i=1}^{|\pi_{2}|}(b_{i}-1)!,
\]
as is described in the foundational paper by Rota \cite{Rota}. 

Given partitions $\pi_{1},\pi_{2}\in\mathrm{Part}\left([m]\right),$
their \emph{meet }$\pi_{1}\wedge\pi_{2}$ is defined by $i\sim j$
in $\pi_{1}\wedge\pi_{2}$$\iff i\sim j$ in both $\pi_{1}$ and $\pi_{2}.$
Their \emph{join $\pi_{1}\vee\pi_{2}$ }is defined by $i\sim j$ in
$\pi_{1}\vee\pi_{2}$$\iff i\sim j$ in either $\pi_{1}$ or $\pi_{2}.$ 

\subsubsection{Partition algebra}

Given $k\in\mathbb{Z}_{>0}$ and $n\in\C$, the partition algebra
$\P$ is an associative unital algebra over $\C.$ It is the vector
space 
\[
\big\langle\pi:\ \pi\in\part\big\rangle_{\C},
\]
with a natural multiplication as described below

To each $\pi\in\part$ we associate a diagram consisting of two rows
of $k$ vertices labeled from $1,\dots k$ on the top row and from
$k+1,\dots,2k$ on the bottom row. We draw an edge between any two
vertices $i$ and $j$ if and only if $i\sim j$ in $\pi.$ Any diagram
consisting of two rows of $k$ vertices with all connected components
complete clearly corresponds to an element $\pi\in\part.$ The product
$\pi_{1}\pi_{2}$ is obtained by merging the bottom row of vertices
of $\pi_{1}$ with the top row of vertices of $\pi_{2}$ to obtain
a new diagram $\tilde{\pi}_{3}$ with three rows of $k$ vertices;
adding edges between any vertices that are in the same connected component
(where there is not already an edge); removing the middle row of vertices
and any adjacent edges to obtain $\pi_{3}\in\part.$ Then we have
\[
\pi_{1}\pi_{2}=n^{\gamma}\pi_{3},
\]
 where $\gamma$ is the number of connected components of $\tilde{\pi}_{3}$
with vertices solely in the middle row. 

Martin and Saleur \cite{MartinSaleur1992} describe some important
properties of $\P.$
\begin{thm}
\label{thm: partition algebra is semisimple}The partition algebra
$\P$ is semisimple for any $n\in\C,$ unless $n$ is an integer in
the range $[0,2k-1].$
\end{thm}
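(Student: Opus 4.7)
The plan is to combine Schur--Weyl--Jones duality at large integer $n$ with a polynomial deformation argument in the parameter $n$, following the approach of Martin and Saleur.

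First I would handle the case of large positive integer $n$. For every integer $n \geq 2k$, Schur--Weyl--Jones duality identifies $\P$ with the centralizer $\endsn\left(\cnk\right)$ of the diagonal $\Sn$-action on $\cnk$, and at such $n$ the representation $\P \to \mathrm{End}\left(\cnk\right)$ is faithful. Since $\C[\Sn]$ is semisimple, its image on $\cnk$ is semisimple, and hence so is its centralizer. This already gives semisimplicity of $\P$ whenever $n$ is an integer with $n \geq 2k$.

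Next, I would pass to general $n \in \C$ by a polynomial deformation argument. The structure constants of $\P$ in the basis of set partitions $\pi \in \part$ are monomials $n^{\gamma}$, so $\P$ is a flat family of algebras over $\C[n]$. Equip $\P$ with a cellular structure (as in Martin's original work), producing cell modules $\Delta(\mu)$ indexed by Young diagrams $\mu$ of size at most $k$. Each $\Delta(\mu)$ carries a Gram bilinear form whose matrix entries are polynomials in $n$, and $\P$ is semisimple if and only if every Gram determinant $\det G_{\mu}(n)$ is nonzero. Consequently the non-semisimple locus is cut out by a single polynomial in $n$, hence is a finite subset of $\C$.

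The remaining task, and the main obstacle, is to locate this singular set inside the integer interval $[0, 2k-1]$. For this I would appeal to Martin's explicit analysis of the restriction-induction tower $P_0(n) \subset P_1(n) \subset \dots \subset P_k(n)$, which yields a factorization of each $\det G_{\mu}(n)$ into linear factors of the form $(n - a)$ with $a$ a non-negative integer; combined with the first step, which rules out any such root with $a \geq 2k$, this forces $a \in \{0,1,\dots,2k-1\}$. The first two steps are essentially formal once Schur--Weyl--Jones duality and cellularity are in hand; the real substance lies in the Gram determinant computation on the partition-algebra tower, which is where a direct proof has to do genuine combinatorial work.
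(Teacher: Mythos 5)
The paper does not actually prove this theorem: it is stated as a result of Martin and Saleur \cite{MartinSaleur1992} with no argument supplied, so there is no ``paper's proof'' to compare against. Your proposal is therefore a reconstruction of the literature proof, and should be judged on its own terms.

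The first two steps of your sketch are correct. For any integer $n \geq 2k$ the action of $\P$ on $\cnk$ is faithful and Schur--Weyl--Jones duality identifies $\P$ with the $S_{n}$-centralizer, which is semisimple by the double-centralizer theorem; and since the structure constants of $\P$ are monomials in $n$, a cellular structure with Gram determinants polynomial in $n$ shows the non-semisimple locus is cut out by a polynomial, hence finite. The third step, however, is where the genuine content lies and it is not carried out. You assert, by appeal to ``Martin's explicit analysis,'' that each Gram determinant factors into linear factors $(n-a)$ with $a$ a non-negative integer, but you give no reason why the roots should be real, let alone non-negative integers. That is precisely the substance of the theorem; without it the argument is circular, since you are assuming the shape of the singular locus in order to locate it. (Given the integer-root factorization, the rest of your logic is sound: a root at an integer $n_{0} \geq 2k$ would contradict step one, so all roots lie in $\{0,\dots,2k-1\}$.) A complete proof must carry out the inductive Gram determinant computation along the tower $P_{0} \subset P_{1/2} \subset P_{1} \subset \dots \subset P_{k}$, or an equivalent recollement/Jones-basic-construction argument. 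You acknowledge this yourself at the end, which is fair, but it does mean the proposal as written is a correct outline rather than a proof.
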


\begin{thm}
\label{thm: irreps of partition algebra}If $\P$ is semisimple, then
there is a natural bijection between the equivalence classes of simple
modules over $\P$ and the set 
\[
\Lambda_{k,n}\overset{\mathrm{def}}{=}\{\lambda\vdash i:\ i=0,\dots,k\}.
\]
 
\end{thm}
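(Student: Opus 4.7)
The plan is to derive the classification of simple $\P$-modules from Schur--Weyl--Jones duality combined with a Tits deformation argument. First, I would handle the stable case of an integer $n \geq 2k$, where $\P$ is certainly semisimple. In this range, Schur--Weyl--Jones duality says that the natural action of $\P$ on $\cnk$ (in which each $\pi \in \part$ acts by permuting tensor factors according to its block structure) generates the full centralizer of the diagonal $\Sn$-action. Combined with the double commutant theorem (applicable because $\C[\Sn]$ is semisimple), one obtains a bimodule decomposition
\[
\cnk \;\cong\; \bigoplus_{\lambda} V^{\lambda^+(n)} \otimes W_{\lambda},
\]
in which each multiplicity space $W_{\lambda}$ is a simple $\P$-module and distinct $\lambda$ give pairwise non-isomorphic simples.

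Next, I would identify precisely which $\lambda$ appear. This is the step with genuine combinatorial content: by a character computation (or equivalently via a set-partition / vacillating tableau bijection), the $\Sn$-irreducibles occurring in $\cnk$ are exactly those of the form $V^{\lambda^+(n)}$ with $|\lambda| \leq k$, and each occurs with positive multiplicity. The indexing set of simple $\P$-modules in the stable range is therefore $\Lambda_{k,n}$.

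Finally, to extend the classification to arbitrary semisimple specializations (including complex $n$ and integer $n$ outside $[0, 2k-1]$), I would invoke a Tits deformation argument: regard $\P$ as the specialization at $x=n$ of the generic partition algebra over $\C(x)$, whose structure constants in the $\part$-basis are polynomial in $x$. The total dimension $|\part|$ is independent of $n$, so whenever $\P$ is semisimple the identity $\sum_{\lambda}(\dim W_{\lambda})^2 = |\part|$ holds, and the characters of the associated cell (standard) modules are polynomial in $n$; standard deformation theory then transports the $\Lambda_{k,n}$-indexing from the stable case to every semisimple value. The main obstacle is the second step, i.e.\ verifying both directions of the equivalence ``$V^{\lambda^+(n)}$ appears in $\cnk$ $\iff |\lambda| \leq k$'' and confirming that each $W_{\lambda}$ is nonzero; once this is in hand the deformation step is essentially formal.
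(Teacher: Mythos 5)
The paper does not give a proof of this theorem: it is quoted verbatim from Martin and Saleur \cite{MartinSaleur1992} alongside Theorem \ref{thm: partition algebra is semisimple}, so there is no ``paper's own proof'' to compare against. Judged on its own, your sketch is a correct and fairly standard route. For integer $n \geq 2k$, the partition algebra is automatically semisimple (by Theorem \ref{thm: partition algebra is semisimple}), the defining action on $\cnk$ is faithful, and Theorem \ref{thm: schur weyl jones duality} together with the double centralizer theorem identifies $\P$ with $\mathrm{End}_{S_n}(\cnk)$; the multiplicity spaces $W_\lambda$ in the bimodule decomposition are then a complete set of pairwise non-isomorphic simple $\P$-modules, indexed by exactly those $\lambda$ with $|\lambda| \leq k$ (which is part (3) of the cited duality theorem, so you can lean on that rather than re-proving it). A deformation argument then carries the indexing to all other semisimple values of $n \in \C$.

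Two small cautions about the last step. First, you should make the faithfulness of $\P \curvearrowright \cnk$ for $n \geq 2k$ explicit; without it, ``generates the centralizer'' gives a surjection $\P \twoheadrightarrow \mathrm{End}_{S_n}(\cnk)$, which only shows the $W_\lambda$ are \emph{some} of the simples. Second, the identity $\sum_\lambda (\dim W_\lambda)^2 = |\part|$ is not really the load-bearing invariant in the Tits deformation step: what you actually need is that the partition algebra has a basis whose structure constants are polynomial in the parameter $x$ (so the generic algebra over $\C(x)$ specializes compatibly), that the generic algebra is split semisimple, and that the irreducible characters of the generic algebra specialize to a full, linearly independent set of irreducible characters at any semisimple specialization. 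Your remark about the characters of the cell modules being polynomial in $n$ is the right ingredient; the dimension-sum sentence can be dropped. With those adjustments the sketch is sound, and the ``main obstacle'' you flag (identifying which $\lambda$ occur, and that each multiplicity is positive) is already handled by the duality theorem the paper states.
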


By convention, there is exactly one Young diagram $\lambda\vdash0.$
We will write $R^{\lambda}$ to denote the equivalence class corresponding
to $\lambda.$

\subsection{Schur--Weyl--Jones duality\label{subsec:SchurWeylJones-duality}}

There is an analogue of classical Schur--Weyl duality due to Jones
\cite{Jones}, which we refer to as Schur--Weyl--Jones duality. 

There is a natural right action of $\P$ on the vector space $\cnk.$
For any standard orthonormal basis vectors $\ei$ and $e_{i_{k+1}}\otimes\dots\otimes e_{i_{2k}}$
and any $\pi\in\part,$ we have 
\[
\big\langle\left(\ei\right)\pi,\ e_{i_{k+1}}\otimes\dots\otimes e_{i_{2k}}\big\rangle=\begin{cases}
1 & \mathrm{if}\ j\sim l\mathrm{\ in}\ \pi\iff i_{j}=i_{l}\\
0 & \mathrm{otherwise.}
\end{cases}
\]
This extends linearly to define a right action on $\cnk.$ For each
$\pi\in\P,$ we define $p_{\pi}\in\mathrm{End}\left(\cnk\right),$
where $p_{\pi}(v)=v\pi.$

The symmetric group $S_{n}$ acts on $\cnk$ on the left via the diagonal
permutation action, i.e. for $g\in S_{n}$ and basis vector $\ei\in\cnk,$
\[
g\left(\ei\right)=e_{g\left(i_{1}\right)}\otimes\dots\otimes e_{g\left(i_{k}\right)}.
\]
 Schur--Weyl--Jones duality asserts the following theorem. 
\begin{thm}
\label{thm: schur weyl jones duality}When $S_{n}$ and $\P$ act
as above and $n\geq2k$, 
\begin{enumerate}
\item $\P$ generates $\mathrm{End}_{S_{n}}\left(\cnk\right)$,
\item $\C\left[S_{n}\right]$ generates $\mathrm{End}_{\P}\left(\cnk\right)$.
\item As a $\left(S_{n},\P\right)$--bimodule, 
\[
\cnk\cong\bigoplus_{\lambda\in\Lambda_{k,n}}V^{\lambda^{+}(n)}\otimes R^{\lambda}.
\]
\end{enumerate}
\end{thm}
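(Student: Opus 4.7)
The plan is to prove all three parts via the double centralizer theorem, with the main work being a dimension count. First, from the definition of the $\P$--action on basis vectors, one directly checks that every operator $p_\pi$ with $\pi\in\part$ commutes with the diagonal $\Sn$--action, so that $\pi\mapsto p_\pi$ extends to an algebra homomorphism $\Phi:\P\to\endsn\left(\cnk\right)$.

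The central step is to show $\Phi$ is an isomorphism when $n\geq 2k$, which yields part (1). On the dimension side, using self--duality of $\cnk$ as an $\Sn$--module,
\[
\dim\endsn\left(\cnk\right)=\dim\left(\cntwok\right)^{\Sn},
\]
which equals the number of $\Sn$--orbits on $[n]^{2k}$. For $n\geq 2k$ every set partition of $[2k]$ can be realized as the equality pattern of some $2k$--tuple, so these orbits are in bijection with $\part$, giving $\dim\endsn\left(\cnk\right)=|\part|=\dim\P$. It therefore suffices to prove $\Phi$ is injective, and I would do this by evaluating a hypothetical relation $\sum_\pi c_\pi\Phi(p_\pi)=0$ on basis tensors whose combined input/output indices realize a chosen partition $\pi_0\in\part$ (possible because $n\geq 2k$) and reading off $c_{\pi_0}$ from the coefficient of the matching output tensor. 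Part (2) then follows at once from the double centralizer theorem, using semisimplicity of $\P$ by Theorem~\ref{thm: partition algebra is semisimple} (since $n\geq 2k$) and semisimplicity of $\C[\Sn]$ by Maschke.

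For part (3), the semisimple double centralizer structure already yields a bimodule decomposition $\cnk\cong\bigoplus_\mu V^\mu\otimes W^\mu$, the sum running over $\mu\vdash n$ for which $V^\mu$ appears in $\cnk$, with $W^\mu$ the matching simple $\P$--module. To identify the $\mu$, I would stratify $[n]^k$ by equality pattern: the $\Sn$--orbit consisting of $k$--tuples with exactly $m$ distinct values is isomorphic as an $\Sn$--set to $\Sn/S_{n-m}$, so the corresponding summand of $\cnk$ is $\mathrm{Ind}_{S_{n-m}}^{\Sn}\mathrm{triv}=M^{(n-m,1^m)}$. Young's rule decomposes this into $\Sn$--irreducibles of the form $V^{\lambda^+(n)}$ with $|\lambda|\leq m$, so summing over set partitions of $[k]$ only $V^{\lambda^+(n)}$ with $\lambda\in\Lambda_{k,n}$ appear. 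Matching with the parametrization of simple $\P$--modules from Theorem~\ref{thm: irreps of partition algebra} (both index sets being $\Lambda_{k,n}$) gives the identification $W^{\lambda^+(n)}\cong R^\lambda$.

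The main obstacle is the injectivity of $\Phi$: one uses the hypothesis $n\geq 2k$ essentially to construct tensors whose equality patterns exhaust $\part$, and some bookkeeping is needed to see that the resulting linear system determines all coefficients uniquely. Once injectivity is in hand, parts (2) and (3) follow from standard semisimple bimodule theory together with the combinatorial identification of the $\Sn$--labels via Young's rule.
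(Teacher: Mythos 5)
The paper does not include its own proof of this theorem; it is stated as background and attributed to Jones, so there is nothing internal to compare against. Your proposal is the standard double--centralizer proof of Schur--Weyl--Jones duality and is essentially correct. Two small points of bookkeeping are worth making explicit. First, in your injectivity argument, evaluating $\sum_\pi c_\pi \Phi(p_\pi)$ on a basis pair whose combined index tuple has equality pattern $\pi_0$ yields $\sum_{\pi\leq\pi_0}c_\pi$, not $c_{\pi_0}$ alone, so the linear system is triangular with respect to the refinement order and one must invoke M\"obius inversion (or induct from the finest partition upward) to conclude all $c_\pi$ vanish. Second, in part (3), the $k$--tuples with a fixed number $m$ of distinct values form not one but several $\Sn$--orbits, one per set partition of $[k]$ into $m$ blocks, each isomorphic to $S_n/S_{n-m}$; the conclusion that $\cnk\cong\bigoplus_{\rho\in\mathrm{Part}([k])}M^{(n-|\rho|,1^{|\rho|})}$ is nevertheless correct. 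Finally, note that the precise matching $W^{\lambda^+(n)}\cong R^\lambda$ is, in the standard treatment, essentially the definition of the bijection in Theorem~\ref{thm: irreps of partition algebra} rather than a separate fact to be proved, so ``matching index sets'' suffices; and you could observe that $\P\cong\endsn(\cnk)$ forces $\P$ to be semisimple directly (endomorphism algebra of a semisimple module), making the appeal to Theorem~\ref{thm: partition algebra is semisimple} optional.
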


\subsection{Refinement}

We now present a refinement of Schur--Weyl--Jones duality due to
Sam and Snowden \cite[Section 6.1.3]{SamSnowden}. We will assume
that $k\in\mathbb{Z}_{>0}$ and $n\geq2k$. 

There is an algebra inclusion 
\[
\iota:\csk\to\P,
\]
where, for each $\sigma\in S_{k},$ 
\[
\iota\left(\sigma\right)=\big\{\{1,k+\sigma^{-1}(1)\},\dots,\{k,k+\sigma^{-1}(k)\}\big\}.
\]
 There is also a corresponding restriction map 
\[
R:\P\to\csk
\]
 in the sense that $R\circ\iota=\mathrm{Id}:\csk\to\csk$ with $R(\pi)=0$
for any set partition $\pi\in\part$ that is \emph{not }the image
of some permutation in $S_{k}.$

Notice that the image of $\iota$ is the complex linear span of all
partitions that contain exactly one element $i$ with $0\leq i\leq k$
and exactly one element $j$ satisfying $k+1\leq j\leq2k$ (equivalently,
all partitions for which the corresponding diagram consists of $k$
connected components, each containing exactly one vertex from each
row). 
\begin{defn}
\label{def: contraction maps}We define the linear contraction map
$T_{j}:\cnk\to\left(\C^{n}\right)^{\otimes k-1}$ by 
\[
\begin{aligned} & T_{j}\left(\ei\right)\\
= & e_{i_{1}}\otimes\dots\otimes\dot{e}_{i_{j}}\otimes\dots\otimes e_{i_{k}}\\
\overset{\mathrm{def}}{=} & e_{i_{1}}\otimes\dots\otimes e_{i_{j-1}}\otimes e_{i_{j+1}}\otimes\dots\otimes e_{i_{k}}.
\end{aligned}
\]
\end{defn}

\begin{defn}
\label{def: Dkn }The vector subspace $D_{k}(n)\subset\cnk$ is defined
to be 
\[
D_{k}(n)\overset{\mathrm{def}}{=}\dkn_{\C},
\]
the complex linear span of all $k$--tuples of distinct elements. 
\end{defn}

We use the contraction maps and $D_{k}(n)$ to construct a subspace
on which we have a version of Schur--Weyl duality involving only
the symmetric groups. 
\begin{defn}
\label{def: Akn}The vector subspace $\A\subseteq\cnk$ is defined
by 
\[
\A=D_{k}(n)\cap\bigcap_{j=1}^{k}\ker\left(T_{j}\right).
\]
\end{defn}

\begin{lem}
\label{lem: Akn is kernel of all of Ikn}We have 
\[
\A=\bigcap_{\pi\in\ker\left(R\right)}\ker\left(\pi\right).
\]
 
\end{lem}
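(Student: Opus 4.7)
The plan is to prove the two inclusions separately, using the observation that $\mathrm{Ker}(R)$ is spanned by those set partitions $\pi \in \part$ that do \emph{not} lie in $\mathrm{im}(\iota)$; equivalently, those $\pi$ admitting at least one block that fails to contain exactly one vertex from the top row $[k]$ and one from the bottom row $\{k+1, \dots, 2k\}$.

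For the inclusion $\bigcap_{\pi \in \mathrm{Ker}(R)} \mathrm{Ker}(p_\pi) \subseteq \A$, I would exhibit particular partitions in $\mathrm{Ker}(R)$ whose individual kernels cut out the defining conditions of $\A$. For each pair $a \neq b$ in $[k]$, the partition
\[
\tau_{a,b} = \bigl\{\{a,b,k+a,k+b\}\bigr\} \cup \bigl\{\{c,k+c\} : c \in [k] \setminus \{a,b\}\bigr\}
\]
lies in $\mathrm{Ker}(R)$, and a direct computation shows $p_{\tau_{a,b}}$ is the diagonal projection onto the span of those $e_{i_1} \otimes \dots \otimes e_{i_k}$ with $i_a = i_b$; intersecting these kernels over all pairs $a \neq b$ gives exactly $D_k(n)$. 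Likewise, for each $j \in [k]$, the partition
\[
\tau_j = \bigl\{\{j\},\{k+j\}\bigr\} \cup \bigl\{\{c,k+c\} : c \in [k] \setminus \{j\}\bigr\}
\]
lies in $\mathrm{Ker}(R)$ and $p_{\tau_j}$ replaces the $j$-th tensor factor by $\sum_{i=1}^n e_i$ while fixing the others; a short check identifies $\mathrm{Ker}(p_{\tau_j})$ with $\mathrm{Ker}(T_j)$.

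For the reverse inclusion $\A \subseteq \bigcap_{\pi \in \mathrm{Ker}(R)} \mathrm{Ker}(p_\pi)$, fix $\pi \notin \mathrm{im}(\iota)$ and $v \in \A$. The key combinatorial observation is the dichotomy: \emph{either (A) some block of $\pi$ contains at least two top vertices, or (C) some block of $\pi$ is a top-only singleton $\{j\}$ for some $j \in [k]$.} Indeed, if neither holds, then every block contains at most one top vertex, and (since top-only singletons are excluded and blocks are nonempty) every block containing a top vertex also contains a bottom vertex. Writing $B_a$ for the unique block containing top $a$, this yields $|B_a \cap \{k+1,\dots,2k\}| \geq 1$ for every $a \in [k]$; comparing with the total count of $k$ bottom vertices forces equality throughout and rules out bottom-only blocks, so $\pi \in \mathrm{im}(\iota)$, a contradiction. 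In case (A), the formula for $p_\pi$ on a simple tensor requires $i_a = i_b$ in its support for the two offending top vertices $a \neq b$, while $v \in D_k(n)$ vanishes on all such tensors, so $p_\pi(v) = 0$. In case (C), the singleton block $\{j\}$ imposes no constraint involving $i_j$, so the coefficient of each basis tensor in $p_\pi(v)$ factors as $\sum_{i_j} c_{i_1,\dots,i_j,\dots,i_k}$ times a quantity independent of $i_j$; this inner sum is precisely the relevant coefficient of $T_j(v)$, which vanishes.

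The main obstacle is spotting this dichotomy: a naive case analysis over $\pi \notin \mathrm{im}(\iota)$ produces four kinds of obstruction (extra top vertices in a block, extra bottoms, top-only blocks, bottom-only blocks), and the counting argument above is what collapses them to the two cases (A) and (C) that align cleanly with the two defining conditions of $\A$.
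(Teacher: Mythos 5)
Your proposal is correct and complete: the partitions $\tau_{a,b}$ and $\tau_{j}$ do lie in $\mathrm{Ker}(R)$ and their kernels cut out exactly $D_{k}(n)$ and $\mathrm{Ker}(T_{j})$ respectively, and your counting dichotomy (a block with two top vertices, or a top-only singleton) is sound — if neither occurs, the $k$ top vertices sit in $k$ distinct blocks each absorbing at least one of the $k$ bottom vertices, forcing $\pi=\iota(\sigma)$. Note that this paper states the lemma as background without proof (it is quoted from the author's earlier work \cite{Cassidy2023}), so there is no in-paper argument to compare against; your two-inclusion argument is the natural route and settles the statement on its own, with the only cosmetic caveat being that the coefficient of a basis tensor in $p_{\pi}(v)$ in your case (C) is a sum over the truncated indices $\hat{I}$ of terms each proportional to a coefficient of $T_{j}(v)$, rather than a single such product — which does not affect the conclusion.
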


Which is to say that, for $\pi\in\part$ and $v\in\A,$ we have $p_{\pi}(v)=0$
unless $\pi=\iota(\sigma)$ for some $\sigma\in S_{k}.$ Moreover,
if $\sigma\in S_{k},$ then $p_{\iota(\sigma)}\in\mathrm{End}\left(\cnk\right)$
permutes the tensor coordinates according to $\sigma.$ That is, 
\[
p_{\iota(\sigma)}\left(\ei\right)=e_{i_{\sigma(1)}}\otimes\dots\otimes e_{i_{\sigma(k)}}.
\]
 This extends linearly to define an action of $\csk$ on $\cnk$ (and
suitable subspaces). We then view $\A$ as a representation of $S_{n}\times S_{k},$
where once again $g\in S_{n}$ acts via the diagonal permutation action
and $\sigma\in S_{k}$ acts via $p_{\iota\left(\sigma^{-1}\right)}.$
We denote this representation 
\[
\Delta:\C\left[S_{n}\times S_{k}\right]\to\mathrm{End}\left(\A\right).
\]
The version of Schur--Weyl duality on $\A$ is the following.
\begin{thm}
\label{thm: refinement of Schur weyl jones duality}Where $S_{n}$
and $S_{k}$ act as above, 
\begin{enumerate}
\item $\csk$ generates $\mathrm{End}_{S_{n}}\left(\A\right)$,
\item $\C\left[S_{n}\right]$ generates $\mathrm{End}_{S_{k}}\left(\A\right)$.
\item The decomposition of $\A$ in to irreducible $S_{n}\times S_{k}$
representations is 
\[
\A\cong\bigoplus_{\lambda\vdash k}V^{\lambda^{+}(n)}\otimes V^{\lambda}.
\]
 
\end{enumerate}
\end{thm}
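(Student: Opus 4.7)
The plan is to combine Theorem~\ref{thm: schur weyl jones duality} with the characterisation $\A = \bigcap_{\pi \in \mathrm{Ker}(R)} \mathrm{Ker}(p_{\pi})$ provided by Lemma~\ref{lem: Akn is kernel of all of Ikn}. First I would verify that $\A$ is stable under both $S_{n}$ (immediate from the $S_{n}$-equivariance of each $T_{j}$ and of $D_{k}(n)$) and, via $\iota$, under $S_{k}$ (since permuting tensor coordinates preserves $D_{k}(n)$ and permutes the contractions $\{T_{j}\}_{j=1}^{k}$ among themselves). Being $S_{n}$-stable, $\A$ inherits from the Schur--Weyl--Jones decomposition an expression
\[
\A = \bigoplus_{\lambda \in \Lambda_{k,n}} V^{\lambda^{+}(n)} \otimes W_{\lambda},
\]
with $W_{\lambda} = \{w \in R^{\lambda} : p_{\pi}(w) = 0 \text{ for all } \pi \in \mathrm{Ker}(R)\}$. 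Establishing all three parts of the theorem simultaneously then reduces to computing each $W_{\lambda}$.

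The key step, which I expect to be the main obstacle, is the observation that $\mathrm{Ker}(R)$ coincides with the classical two-sided ideal $I_{k-1} \subseteq \P$ spanned by those set partitions of $[2k]$ whose \emph{propagating number} (the number of blocks meeting both the top and bottom rows of the diagram) is strictly less than $k$. Indeed $\iota(\csk)$ is exactly the span of partitions of propagating number $k$, since having $k$ blocks each meeting both rows forces every block to contain one top and one bottom vertex; submultiplicativity of the propagating number under diagram multiplication then confirms that $I_{k-1}$ is two-sided, yielding the classical algebra isomorphism $\P / I_{k-1} \cong \csk$. This identification, standard in the partition-algebra literature, is the only non-formal input into what follows.

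With $\mathrm{Ker}(R) = I_{k-1}$ in hand, each $W_{\lambda}$ is automatically a $\P$-submodule of $R^{\lambda}$: for $w \in W_{\lambda}$ and $a \in \P$, $I_{k-1}(aw) = (I_{k-1}a)w \subseteq I_{k-1} w = 0$. By simplicity of $R^{\lambda}$ for $n \geq 2k$ (Theorems~\ref{thm: partition algebra is semisimple} and~\ref{thm: irreps of partition algebra}), $W_{\lambda} \in \{0, R^{\lambda}\}$, and $W_{\lambda} = R^{\lambda}$ occurs precisely when $I_{k-1}$ annihilates $R^{\lambda}$, i.e.\ when $R^{\lambda}$ descends to a simple $\csk$-module via $\P / I_{k-1} \cong \csk$. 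This forces $\lambda \vdash k$, and the standard bijection between top-level simple $\P$-modules and simple $\csk$-modules identifies $R^{\lambda} \cong V^{\lambda}$ in this range, giving part~(3). Parts~(1) and~(2) then follow from Artin--Wedderburn: the $V^{\lambda^{+}(n)}$ for distinct $\lambda \vdash k$ are pairwise non-isomorphic $S_{n}$-irreducibles, so $\endsn(\A) \cong \bigoplus_{\lambda \vdash k} \mathrm{End}(V^{\lambda})$, onto which $\csk$ surjects; dually $\C[S_{n}]$ surjects onto $\mathrm{End}_{S_{k}}(\A) \cong \bigoplus_{\lambda \vdash k} \mathrm{End}(V^{\lambda^{+}(n)})$.
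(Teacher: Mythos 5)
Your proposal is correct, but note first that there is nothing in this paper to compare it against: Theorem \ref{thm: refinement of Schur weyl jones duality} is stated here without proof, being imported from \cite{Cassidy2023}. Taken on its own terms, your derivation is a valid reconstruction from the quoted ingredients. The chain works: Lemma \ref{lem: Akn is kernel of all of Ikn} exhibits $\A$ as the common annihilator in $\cnk$ of $\mathrm{Ker}(R)$; a set partition of $[2k]$ has propagating number $k$ exactly when it is $\iota(\sigma)$ for some $\sigma\in S_{k}$, so $\mathrm{Ker}(R)$ is indeed the span $I_{k-1}$ of diagrams of propagating number $<k$, which is a two-sided ideal with $\P/I_{k-1}\cong\csk$ by submultiplicativity of the propagating number; since the annihilator of $I_{k-1}$ acts only on the multiplicity spaces in the Schur--Weyl--Jones decomposition of Theorem \ref{thm: schur weyl jones duality}, semisimplicity (Theorems \ref{thm: partition algebra is semisimple} and \ref{thm: irreps of partition algebra}, valid as $n\geq2k$) forces each $W_{\lambda}$ to be $0$ or $R^{\lambda}$, the latter precisely when $R^{\lambda}$ factors through $\csk$; and parts (1) and (2) then follow from (3) by Wedderburn/double-commutant exactly as you say, since the $V^{\lambda^{+}(n)}$ are pairwise non-isomorphic and the $V^{\lambda}$, $\lambda\vdash k$, exhaust the simple $\csk$-modules. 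Two small points to tighten. First, $R^{\lambda}$ enters the bimodule decomposition as a \emph{right} $\P$-module, so the submodule check should read $(wa)x=w(ax)\in wI_{k-1}=0$ for $x\in I_{k-1}$; harmless, as only two-sidedness of $I_{k-1}$ is used. Second, the step ``this forces $\lambda\vdash k$, and $R^{\lambda}\cong V^{\lambda}$'' relies on the labelling convention that the simple $\P$-modules killed by $I_{k-1}$ are exactly those indexed by partitions of $k$, with induced $\csk$-structure the Specht module of the \emph{same} label; this is the standard normalisation (the one implicit in Theorems \ref{thm: irreps of partition algebra} and \ref{thm: schur weyl jones duality}, and developed e.g. in Halverson and Ram's treatment of the ideal filtration of the partition algebra), but it is where the ``natural bijection'' does real work and should be cited rather than absorbed into ``standard'' if this argument is written up.
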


The main result of the author in \cite{Cassidy2023} is a formula
for the orthogonal projection from $\cnk$ to subspaces isomorphic
to the irreducible `blocks' in the decomposition of $\A.$ Define
\[
\xi\overset{\mathrm{def}}{=}\element\in\A.
\]
We denote by $\normxilambda$ the normalised projection of $\xi$
to the $V^{\lambda}$--isotypic subspace of $\A$. This can be shown
to be non--zero, giving the following lemma.
\begin{lem}
\label{lem: construction of irreducible block}For any $\lambda\vdash k$,
the $S_{n}\times S_{k}$ representation generated by $\normxilambda$
is isomorphic to $V^{\lambda^{+}(n)}\otimes V^{\lambda}.$ That is,
\[
\mathcal{U}_{\lambda^{+}(n)}\overset{\mathrm{def}}{=}\big\langle\Delta\left(g,\sigma\right)\left(\normxilambda\right):\ g\in S_{n},\ \sigma\in S_{k}\big\rangle_{\C}\cong V^{\lambda^{+}(n)}\otimes V^{\lambda}.
\]
\end{lem}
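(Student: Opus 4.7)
The plan is to exhibit $\normxilambda$ as a non-zero vector lying in the summand $V^{\lambda^{+}(n)}\otimes V^{\lambda}$ of the $(S_{n}\times S_{k})$-decomposition of $\A$ furnished by Theorem~\ref{thm: refinement of Schur weyl jones duality}, and then conclude by irreducibility of that summand. First I would verify $\xi=\element\in\A$. Expanding $\xi$ in the standard basis, every summand has the form $\pm e_{i_{1}}\otimes\dots\otimes e_{i_{k}}$ with $i_{j}\in\{2j-1,2j\}$, so the indices are pairwise distinct and $\xi\in D_{k}(n)$; and since each $T_{j}$ drops the $j$-th tensor slot without regard to its label, applying it to the factor $(e_{2j-1}-e_{2j})$ yields zero, giving $T_{j}(\xi)=0$ for every $j$.

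Next, I would identify $\xilambda$ with $P_{\lambda}(\xi)$, where $P_{\lambda}$ is the $V^{\lambda}$-isotypic central idempotent for the $S_{k}$-action $\Delta$. Since $\Delta(\sigma)=p_{\iota(\sigma^{-1})}$ and characters of $S_{k}$ are real-valued, the standard formula $\frac{d_{\lambda}}{k!}\sum_{\sigma}\chi^{\lambda}(\sigma^{-1})\Delta(\sigma)$ collapses via the substitution $\tau=\sigma^{-1}$ to $\frac{d_{\lambda}}{k!}\sum_{\tau}\chi^{\lambda}(\tau)p_{\iota(\tau)}$, whose image on $\xi$ is precisely $\xilambda$. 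In particular $\xi_{\lambda}$ lies in the $V^{\lambda}$-isotypic subspace of $\A$ under $S_{k}$, which by Theorem~\ref{thm: refinement of Schur weyl jones duality} coincides with the single summand $V^{\lambda^{+}(n)}\otimes V^{\lambda}$.

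The main obstacle, and the one substantive calculation, is proving $\xi_{\lambda}\neq 0$ so that $\normxilambda$ is well-defined. Writing $v_{i}=e_{2i-1}-e_{2i}$, one has $\langle v_{i},v_{j}\rangle=2\delta_{ij}$, hence $\langle p_{\iota(\sigma)}\xi,\xi\rangle=\prod_{j=1}^{k}\langle v_{\sigma(j)},v_{j}\rangle=2^{k}\delta_{\sigma,e}$. Since $\Delta$ is unitary on $\A$ with respect to the restricted standard inner product, $P_{\lambda}$ is self-adjoint, and therefore
\[
\|\xi_{\lambda}\|^{2}=\langle P_{\lambda}\xi,\xi\rangle=\frac{d_{\lambda}}{k!}\sum_{\sigma\in S_{k}}\chi^{\lambda}(\sigma)\langle p_{\iota(\sigma)}\xi,\xi\rangle=\frac{2^{k}d_{\lambda}^{2}}{k!}>0.
\]
Consequently $\normxilambda$ is a non-zero element of the irreducible $S_{n}\times S_{k}$-subrepresentation $V^{\lambda^{+}(n)}\otimes V^{\lambda}$, so the cyclic subrepresentation $\u$ it generates is the entire summand, as required.
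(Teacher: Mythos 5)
Your proposal is correct and follows essentially the route the paper takes (deferring the details to \cite{Cassidy2023}): check $\xi\in\A$, show the $V^{\lambda}$--isotypic projection of $\xi$ is non--zero, and conclude from the multiplicity--one decomposition of $\A$ in Theorem \ref{thm: refinement of Schur weyl jones duality} that the cyclic $S_{n}\times S_{k}$--module generated by $\normxilambda$ is the irreducible summand $V^{\lambda^{+}(n)}\otimes V^{\lambda}$. Your explicit computation $\|P_{\lambda}\xi\|^{2}=\tfrac{2^{k}d_{\lambda}^{2}}{k!}$ correctly supplies the non--vanishing step that the paper only asserts.
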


This leads to the main theorem and a corollary of the author in \cite{Cassidy2023}
which will be key for the combinatorial integration method used in
this paper. We will write 
\[
\sum_{\pi}^{\le S_{k}}
\]
to indicate that we are summing over set partitions $\pi$ that are
$\le\iota(\tau)$ for some $\tau\in S_{k}.$
\begin{thm}
\label{thm: main theorem of first paper}For any $\lambda\vdash k$,
the orthogonal projection 
\[
\mathcal{Q}_{\lambda,n}:\cnk\to\mathcal{U}_{\lambda^{+}(n)}
\]
 is given by 
\[
\sum_{\pi\in\part}^{\le S_{k}}c(n,k,\lambda,\pi)p_{\pi},
\]
where 
\begin{equation}
c(n,k,\lambda,\pi)=\frac{d_{\lambda^{+}(n)}(-1)^{|\pi|+k}}{(n)_{|\pi|}}\sum_{\overset{\tau\in S_{k}}{{\scriptstyle {\scriptscriptstyle \iota(\tau)\geq\pi}}}}\chi^{\lambda}(\tau).\label{eq: coeff of pi in projection}
\end{equation}
\end{thm}

\begin{cor}
\label{cor: bitrace corollary}For any $\lambda\vdash k$ and any
$g\in S_{n},$ $\chi^{\lambda^{+}(n)}(g)$ is equal to
\[
\begin{aligned} & \frac{1}{d_{\lambda}}\mathrm{tr}_{\mathcal{U}_{\lambda^{+}(n)}}\left(g\right)\\
= & \frac{1}{d_{\lambda}}\mathrm{tr}_{\cnk}\left(g,\mathcal{Q}_{\lambda,n}\right).
\end{aligned}
\]
\end{cor}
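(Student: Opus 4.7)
The plan is to handle the two equalities separately, with the first being essentially character-theoretic and the second being a straightforward property of traces of projections.

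For the first equality, I would invoke Lemma \ref{lem: construction of irreducible block} directly: it tells us that $\mathcal{U}_{\lambda,n} \cong V^{\lambda^{+}(n)} \otimes V^{\lambda}$ as an $S_{n}\times S_{k}$ representation. Since $g \in S_{n}$ acts only on the first tensor factor and as the identity on $V^{\lambda}$, the trace of $g$ acting on $\mathcal{U}_{\lambda,n}$ factors as
\[
\mathrm{tr}_{\mathcal{U}_{\lambda,n}}(g) = \chi^{\lambda^{+}(n)}(g) \cdot \mathrm{tr}_{V^{\lambda}}(\mathrm{Id}) = d_{\lambda}\,\chi^{\lambda^{+}(n)}(g),
\]
and dividing through by $d_{\lambda}$ gives $\chi^{\lambda^{+}(n)}(g) = \frac{1}{d_{\lambda}}\mathrm{tr}_{\mathcal{U}_{\lambda,n}}(g)$.

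For the second equality, I would use that the bitrace $\mathrm{btr}_{\cnk}(g,\mathcal{Q}_{\lambda,n})$ is by definition $\mathrm{tr}_{\cnk}\bigl(g \circ \mathcal{Q}_{\lambda,n}\bigr)$, i.e.\ the trace of the operator obtained by first projecting to $\mathcal{U}_{\lambda,n}$ and then applying $g$. Since $\mathcal{U}_{\lambda,n}$ is an $S_{n}$-subrepresentation of $\cnk$ and $\mathcal{Q}_{\lambda,n}$ is the \emph{orthogonal} projection onto it, the operator $\mathcal{Q}_{\lambda,n}$ commutes with the $S_{n}$-action and decomposes the ambient space as $\cnk = \mathcal{U}_{\lambda,n} \oplus \mathcal{U}_{\lambda,n}^{\perp}$, with both summands $S_{n}$-invariant. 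Relative to this decomposition, $g \circ \mathcal{Q}_{\lambda,n}$ is block-diagonal, equal to $g|_{\mathcal{U}_{\lambda,n}}$ on the first summand and $0$ on the second. Therefore $\mathrm{btr}_{\cnk}(g,\mathcal{Q}_{\lambda,n}) = \mathrm{tr}_{\mathcal{U}_{\lambda,n}}(g)$, which combined with the previous step yields the desired identity.

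There is no real obstacle here: the work has all been done in Lemma \ref{lem: construction of irreducible block} and Theorem \ref{thm: main theorem of first paper}. The only point that requires a sentence of care is the commutativity of $g$ and $\mathcal{Q}_{\lambda,n}$, which follows immediately from $\mathcal{U}_{\lambda,n}$ being $S_{n}$-invariant together with orthogonality of the projection. The corollary should therefore occupy only a few lines.
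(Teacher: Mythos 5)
Your proof is correct and is the natural (indeed, essentially the only) route: the first equality follows from the $S_n\times S_k$-module isomorphism $\mathcal{U}_{\lambda,n}\cong V^{\lambda^{+}(n)}\otimes V^{\lambda}$ of Lemma \ref{lem: construction of irreducible block}, giving the factor $d_\lambda$ from the trivial action of $g$ on $V^\lambda$, and the second equality is just the observation that the bitrace $\mathrm{btr}_{\cnk}(g,\mathcal{Q}_{\lambda,n})=\mathrm{tr}_{\cnk}(g\circ\mathcal{Q}_{\lambda,n})$ collapses to $\mathrm{tr}_{\mathcal{U}_{\lambda,n}}(g)$ because $\mathcal{Q}_{\lambda,n}$ is the orthogonal projection onto the $S_n$-invariant subspace $\mathcal{U}_{\lambda,n}$, so $g\circ\mathcal{Q}_{\lambda,n}$ is block-diagonal relative to $\cnk=\mathcal{U}_{\lambda,n}\oplus\mathcal{U}_{\lambda,n}^{\perp}$. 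The paper itself states this corollary without proof, deferring to the author's earlier paper, so there is no in-text argument to compare against; but your reconstruction matches the intended content and justification.
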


If $\pi\leq\iota(\tau)$ for some $\tau\in S_{k},$ then the diagram
corresponding to $\pi$ is obtained from that of $\iota(\tau)$ by
deleting edges. Let $\mathrm{del}\left(\pi\right)$ be the number
of edges deleted (i.e. $\mathrm{del}\left(\pi\right)=|\pi|-k$, so
that if $\pi\leq\iota\left(\tau_{1}\right)$ \emph{and }$\pi\leq\iota\left(\tau_{2}\right),$
then $\mathrm{del}(\pi)$ is independent of whether we count the edges
deleted from $\tau_{1}$ or $\tau_{2}$).
\begin{lem}
\label{rem: leading coeff of pi in projection}If $\pi\leq\iota(\tau)$
for some $\tau\in S_{k},$ then 
\[
c(n,k,\lambda,\pi)=O\left(\frac{1}{n^{\mathrm{del}(\pi)}}\right).
\]
\end{lem}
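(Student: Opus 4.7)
The proof should proceed by a term-by-term size estimate of the formula~(\ref{eq: coeff of pi in projection}) giving $c(n,k,\lambda,\pi)$. Since the lemma asserts only an upper bound, it suffices to show that every summand satisfies the claimed order, and that the number of summands is bounded independently of $n$.

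First, I would record the three standard size estimates I need. For fixed $\lambda\vdash k$, the hook-length formula (or the Frobenius formula) gives $d_{\lambda^{+}(n)} = O_{\lambda}\!\left(n^{k}\right)$, since it is a polynomial in $n$ of degree $|\lambda|=k$. Next, for any fixed integer $j$, $(n)_{j}=n(n-1)\cdots(n-j+1)\asymp n^{j}$. Finally, for fixed $k$, the character values $\chi^{\lambda}(\sigma)$ (bounded by $d_{\lambda}$) and the M\"obius values $\mu(\pi',\pi)$ on $\mathrm{Part}([2k])$ are $O_{k}(1)$, and the outer index sets $S_{k}\times S_{k}$ and $\{\pi'\leq\pi\}$ are finite and of size depending only on $k$.

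Plugging these into~(\ref{eq: coeff of pi in projection}), each summand is bounded by
\[
O_{k,\lambda}\!\left(\frac{d_{\lambda^{+}(n)}}{(n)_{|\pi'|}}\right)=O_{k,\lambda}\!\left(n^{k-|\pi'|}\right).
\]
The key combinatorial point is the refinement direction: $\pi'\leq\pi$ means $\pi'$ is \emph{finer} than $\pi$, so $|\pi'|\geq|\pi|$. Hence $n^{k-|\pi'|}\leq n^{k-|\pi|}=n^{-\mathrm{del}(\pi)}$, and one needs only to check that at least one valid $\pi'$ exists so that the sum is not vacuous. This is immediate: under the hypothesis $\pi\leq\iota(\tau)$, the choice $\sigma=e$, $\sigma'=\tau$ gives $\iota(\sigma^{-1}\sigma')\wedge\pi=\pi$, and $\pi'=\pi$ is then a permitted index. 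Summing the $O_{k,\lambda}(1)$ terms yields $c(n,k,\lambda,\pi)=O\!\left(n^{-\mathrm{del}(\pi)}\right)$, as required.

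The only conceptual point to verify carefully is the direction of the inequality between $|\pi'|$ and $|\pi|$ under refinement; everything else is routine bookkeeping. There is no real obstacle, since the argument is an upper bound and one does not need to track potential cancellations among leading terms.
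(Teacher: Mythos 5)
Your proposal is correct and follows essentially the same route as the paper: a term-by-term estimate of (\ref{eq: coeff of pi in projection}) using $d_{\lambda^{+}(n)}=O_{\lambda}(n^{k})$, $(n)_{|\pi'|}\asymp n^{|\pi'|}$, and the refinement inequality $|\pi'|\geq|\pi|$, with the paper simply phrasing this as the dominant term occurring at $\pi'=\pi$. The only superfluous step is checking the sum is non-vacuous, which is not needed for an upper bound, but this is harmless.
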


\begin{proof}
This is immediate from the observation that $d_{\lambda^{+}(n)}=O\left(n^{k}\right)$. 
\end{proof}
\begin{rem}
If $\pi=\iota(\tau)$, then this is $O(1).$ 
\end{rem}

\subsection{The Weingarten calculus\label{subsec:The-Weingarten-calculus}}

An important component of our proof of Theorem \ref{thm: main theorem of first paper}
is the Weingarten calculus. Originally developed by Weingarten \cite{Weingarten}
and Xu \cite{Xu} and then more rigorously by Collins \cite{Collins2003}
and Collins and Sniady \cite{CollinsSniady2006}, the Weingarten calculus
is a method for computing integrals of matrix coefficients using Schur--Weyl
duality analogues. Below is the main formulation for the Weingarten
calculus for the symmetric group, we direct the reader to the survey
\cite[Section 3]{CollinsWeingartenShort} or indeed any of the aforementioned
papers for an in depth discussion. We will denote the matrix coefficients
for $g\in S_{n}$ acting diagonally on $\cnk$ by $g_{ij}.$

Define, for any multi--index $I=\left(i_{1},\dots,i_{m}\right)$
with $i_{1},\dots,i_{m}\in[n]$ and any set partition $\pi\in\mathrm{Part}\left([m]\right)$
$,$ the function $\delta_{\pi},$ where 
\[
\delta_{\pi}\left(I\right)=\begin{cases}
1 & \mathrm{if}\ j\sim l\mathrm{\ in}\ \pi\implies i_{j}=i_{l}\\
0 & \mathrm{otherwise}.
\end{cases}
\]
 
\begin{thm}
\label{thm: weingarten calculus}For any multi--indices $I=\left(i_{1},\dots,i_{m}\right),$
$J=\left(j_{1},\dots,j_{m}\right)\in[n]^{m},$ with respect to the
uniform measure on $S_{n}$, we have 
\[
\mathop{\int}_{S_{n}}g_{i_{1}j_{1}}\dots g_{i_{m}j_{m}}dg=\sum_{\pi_{1},\pi_{2}\in\mathrm{Part}\left([m]\right)}\delta_{\pi_{1}}(I)\delta_{\pi_{2}}(J)\mathrm{Wg}_{n,m}\left(\pi_{1},\pi_{2}\right),
\]
where $\mathrm{Wg}_{n,m}$ is the Weingarten function for $S_{n},$
\[
\mathrm{Wg}_{n,m}\left(\pi_{1},\pi_{2}\right)=\sum_{\pi\leq\pi_{1}\wedge\pi_{2}}\mu\left(\pi,\pi_{1}\right)\mu\left(\pi,\pi_{2}\right)\frac{1}{(n)_{|\pi|}}.
\]
 
\end{thm}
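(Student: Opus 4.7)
The plan is to compute both sides of the Weingarten identity directly and match them via Möbius inversion on the partition lattice $\mathrm{Part}([m])$. The key observation is that each $g \in S_n$ gives rise to a permutation matrix, so the matrix coefficient $g_{i_\ell j_\ell}$ is the indicator that $g(j_\ell) = i_\ell$. Consequently the product $g_{i_1 j_1}\cdots g_{i_m j_m}$ is the indicator that $g$ simultaneously sends $j_\ell \mapsto i_\ell$ for every $\ell \in [m]$.

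Given the multi-indices $I, J$, let $\pi_I, \pi_J \in \mathrm{Part}([m])$ be their \emph{kernel partitions}, defined by $\ell \sim \ell'$ in $\pi_I$ iff $i_\ell = i_{\ell'}$, and similarly for $\pi_J$. Since $g$ is a bijection, the system of equations $g(j_\ell) = i_\ell$ is consistent if and only if $\pi_I = \pi_J$, and when this holds the number of permutations in $S_n$ satisfying the system depends only on the common value $k = |\pi_I| = |\pi_J|$ of distinct indices, equalling $(n-k)!$. Dividing by $n!$, the left-hand side thus equals $\mathbf{1}[\pi_I = \pi_J]\,(n)_{|\pi_I|}^{-1}$.

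For the right-hand side, observe that $\delta_\pi(I) = 1$ is equivalent to $\pi \leq \pi_I$ in the refinement order (and likewise for $J$). Substituting the definition of $\mathrm{Wg}_{n,m}$ and interchanging the order of summation, I obtain
\[
\sum_{\pi_1, \pi_2 \in \mathrm{Part}([m])} \delta_{\pi_1}(I)\delta_{\pi_2}(J)\mathrm{Wg}_{n,m}(\pi_1,\pi_2) = \sum_{\pi \in \mathrm{Part}([m])} \frac{1}{(n)_{|\pi|}}\Bigl(\sum_{\pi \leq \pi_1 \leq \pi_I}\mu(\pi,\pi_1)\Bigr)\Bigl(\sum_{\pi \leq \pi_2 \leq \pi_J}\mu(\pi,\pi_2)\Bigr).
\]
By the defining property of the Möbius function on a finite poset, each inner sum equals $\mathbf{1}[\pi = \pi_I]$ and $\mathbf{1}[\pi = \pi_J]$ respectively, so the outer sum collapses to $\mathbf{1}[\pi_I = \pi_J]\,(n)_{|\pi_I|}^{-1}$, matching the left-hand side.

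There is no genuine obstacle here; the result is essentially a reformulation of a finite combinatorial identity. The only place that requires a little care is the conventional direction of the refinement order and the verification that $\delta_\pi(I) = 1 \iff \pi \leq \pi_I$; once this is pinned down, the Möbius inversion step is automatic. A brief sanity check in the extremal case $m = 1$ (where both sides equal $1/n$) and $m = 2$ with $i_1 = i_2$, $j_1 \neq j_2$ (where both sides vanish) confirms the signs and the choice of refinement convention.
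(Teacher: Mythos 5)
Your proof is correct. The paper does not prove this theorem at all\textemdash it is quoted as background from the Weingarten-calculus literature (Collins, Collins\textendash \'Sniady and the cited survey)\textemdash and your argument is essentially the standard one for $S_{n}$: the left side is the proportion of permutations compatible with the constraints $g(j_{\ell})=i_{\ell}$, which is $\mathbf{1}[\pi_{I}=\pi_{J}]/(n)_{|\pi_{I}|}$ by a direct count, and the right side collapses to the same expression because $\delta_{\pi}(I)=1\iff\pi\leq\pi_{I}$ and $\sum_{\pi\leq\pi_{1}\leq\pi_{I}}\mu(\pi,\pi_{1})=\mathbf{1}[\pi=\pi_{I}]$ by M\"obius inversion on $\mathrm{Part}([m])$; all sums are finite, so the interchange is harmless, and your refinement-order convention matches the paper's.
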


\begin{rem}
\label{rem: leading term for weingarten function}For any $\pi_{1},\pi_{2}\in\mathrm{Part}\left([m]\right),$
$\mathrm{Wg}_{n,m}\left(\pi_{1},\pi_{2}\right)=O\left(\frac{1}{n^{|\pi_{1}\wedge\pi_{2}|}}\right)$.
In particular, $\mathrm{Wg}_{n,m}\left(\pi_{1},\pi_{1}\right)=O\left(\frac{1}{n^{|\pi_{1}|}}\right)$. 
\end{rem}

\subsection{$C^{*}$--algebras of free groups\label{subsec: C* algebras of free groups}}

We refer to \cite{NicaSpeicher} for a thorough introduction. To the
free group $F_{r}$ we associate its group algebra $\C\left[F_{r}\right],$
the collection of elements of the form 
\[
x=\sum_{w\in F_{r}}x(w)w,
\]
 where each $x(w)\in\C$ and only finitely many are non--zero. One
defines a norm on $\C\left[F_{r}\right]$ by 
\[
\|x\|_{C^{*}\left(F_{r}\right)}=\sup\big\{\|\pi(x)\|_{\mathrm{op}}:\ \pi\ \mathrm{a\ representation\ of}\ F_{r}\big\},
\]
where $\|-\|_{\mathrm{op}}$ is the operator norm on $\ell^{2}\left(F_{r}\right)$.
The completion of $\C\left[F_{r}\right]$ in this norm is the $C^{*}$--algebra
of $F_{r},$ denoted $C^{*}\left(F_{r}\right)$. The left regular
representation of $F_{r}$ is denoted $\lambda$ and admits the \emph{reduced
norm }on $\C\left[F_{r}\right]$ by 
\[
\|x\|_{\mathrm{C_{\lambda}^{*}\left(F_{r}\right)}}=\|\lambda\left(x\right)\|_{\mathrm{op}}.
\]
 The completion of $\C\left[F_{r}\right]$ in this norm is the reduced
$C^{*}$--algebra of $F_{r},$ denoted $C_{\lambda}^{*}\left(F_{r}\right).$

One can define a trace $\tau$ on $\C\left[F_{r}\right]$ to be the
map 
\[
\tau(x)=x(e).
\]
This extends continuously to both the $C^{*}$--algebras defined
above and remains a trace in both cases. 

\subsection{Polynomials and random walks\label{subsec: polynomials and random walks}}

We collect some results and definitions on polynomials and random
walks that are required for the analysis in \S\ref{sec: Proof of strong convergence}.
The proofs are readily available in \cite{MAgeeDelaSalle}. See also
\cite{CGVTvH2024} for more background. 
\begin{lem}[{\cite[Lemma 4.2]{MAgeeDelaSalle}}]
\label{lem: bounding sup pf kth derivative of polynomial }For every
polynomial $P$ in one variable with bounded degree, $\deg\left(P\right)\leq D$
and for every integer $k\leq D,$ 
\[
\sup_{\left[0,\frac{1}{2D^{2}}\right]}\left|P^{(k)}\right|\leq\frac{2^{2k+1}D^{4k}}{(2k-1)!!}\sup_{n\geq D^{2}}\left|P\left(\frac{1}{n}\right)\right|,
\]
where $(2k-1)!!=(2k-1)(2k-1)\dots(3)1.$
\end{lem}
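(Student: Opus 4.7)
The inequality has the flavour of a Markov--Bernstein estimate, and my plan is to decompose it into two steps: first a sampling estimate of the form
\[
\sup_{[0,\,1/(2D^2)]} |P| \;\leq\; 2 \sup_{n \geq D^2} \left|P\!\left(\frac{1}{n}\right)\right|,
\]
and second a Markov brothers inequality that bounds $\sup_{[0,1/(2D^2)]} |P^{(k)}|$ in terms of $\sup_{[0,1/(2D^2)]} |P|$. Combining the two immediately gives the stated constant $\frac{2^{2k+1} D^{4k}}{(2k-1)!!}$.

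For the sampling step I would change variables via $y = 2D^2 x - 1$, which sends $[0, 1/D^2]$ to $[-1,1]$ and the interval of interest $[0, 1/(2D^2)]$ to $[-1, 0]$. The samples $1/n$ for $n \geq D^2$ become $y_n = 2D^2/n - 1 \in (-1, 1]$, and accumulate at $-1$; the transformed polynomial $Q(y) = P\!\left(\frac{y+1}{2D^2}\right)$ has degree $\leq D$. The task then reduces to showing $\sup_{[-1,0]} |Q| \leq 2\sup_n |Q(y_n)|$. The set $\{y_n\}$ covers $(-1, 1]$ with density $\Omega(D^2)$ samples per unit length throughout $[-1, 0]$ and with an accumulation point at $-1$, so one can select $D+1$ of these samples close to the Chebyshev nodes of $[-1, 0]$ and apply a Lagrange interpolation bound with Lebesgue constant at most $2$; alternatively, one can invoke a discrete Remez-type inequality in which the density of the sample grid controls the constant.

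For the derivative step I would apply the Markov brothers inequality: for any polynomial $Q$ of degree $\leq D$ on $[-1, 1]$,
\[
\sup_{[-1,1]} |Q^{(k)}| \;\leq\; \frac{D^2 (D^2 - 1)(D^2 - 4) \cdots (D^2 - (k-1)^2)}{(2k-1)!!}\, \sup_{[-1,1]} |Q| \;\leq\; \frac{D^{2k}}{(2k-1)!!}\, \sup_{[-1,1]} |Q|.
\]
Rescaling from $[-1, 1]$ back to $[0, 1/(2D^2)]$ (an interval of length $1/(2D^2)$) introduces a chain-rule factor of $(4D^2)^k = 4^k D^{2k}$ on the $k$-th derivative, producing
\[
\sup_{[0,\,1/(2D^2)]} |P^{(k)}| \;\leq\; \frac{4^k D^{4k}}{(2k-1)!!}\, \sup_{[0,\,1/(2D^2)]} |P|.
\]
Multiplying by the factor of $2$ from the sampling step gives the claimed $\frac{2^{2k+1} D^{4k}}{(2k-1)!!}$.

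I expect the main obstacle to be the sampling step. The Markov brothers inequality is classical and the rescaling routine, but passing from a supremum on the discrete set $\{1/n\}$ to a continuous $L^\infty$ norm on $[0, 1/(2D^2)]$ with a dimension-\emph{independent} constant (in fact, the explicit constant $2$) relies on a precise quantitative property of the sample set, namely that its density at $0$ grows like $n^2$ while the length of the target interval is $1/(2D^2)$; this is the threshold where Remez/Coppersmith--Rivlin-type inequalities stop depending on $D$. Obtaining the small explicit constant, rather than merely a bounded one, requires a careful choice of $D+1$ interpolation nodes amongst the $y_n$.
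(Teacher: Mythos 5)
Your two--step decomposition is exactly the one suggested by the shape of the constant ($2^{2k+1}D^{4k}/(2k-1)!!=2\cdot(4D^{2})^{k}\cdot D^{2k}/(2k-1)!!$), and this is also how the cited lemma of Magee--de la Salle is organised (the present paper only quotes it); your Markov--brothers step and the rescaling bookkeeping are correct. The genuine gap is in your justification of the sampling inequality $\sup_{[0,1/(2D^{2})]}|P|\leq2\sup_{n\geq D^{2}}|P(1/n)|$. Lagrange interpolation at ($D+1$ near--)Chebyshev nodes cannot give a Lebesgue constant at most $2$: even for exact Chebyshev nodes the Lebesgue constant grows like $\tfrac{2}{\pi}\log(D+1)$, so it exceeds $2$ already for moderate $D$, and perturbing the nodes to nearby points $1/n$ only makes it worse; the constant you would get this way is $D$--dependent, which destroys the final bound. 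The alternative appeal to a discrete Remez/Coppersmith--Rivlin inequality is also not enough as stated: the sample density here sits exactly at the critical Markov scale, and those results do not hand you the clean dimension--free constant $2$ without the specific structure of the set $\{1/n\}$.

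The step is nonetheless true, and it has a short elementary proof that closes the gap. Let $I=[0,1/(2D^{2})]$, $S=\sup_{I}|P|$ and $M=\sup_{n\geq D^{2}}|P(1/n)|$; note $|P(0)|\leq M$ by continuity since $1/n\to0$. The sample points $1/n$ with $n\geq2D^{2}$ all lie in $I$, the right endpoint $1/(2D^{2})$ is one of them, and consecutive gaps satisfy $\frac{1}{n}-\frac{1}{n+1}=\frac{1}{n(n+1)}\leq\frac{1}{4D^{4}}$, so every $x\in I$ is within $\frac{1}{8D^{4}}$ of a sample point (or is $0$). By the first--order Markov inequality on $I$ (length $1/(2D^{2})$),
\[
\sup_{I}|P'|\leq\frac{2D^{2}}{|I|}\,S=4D^{4}S,
\]
hence for the maximising point $x_{0}\in I$ and its nearest sample point $y\in I$,
\[
S\leq|P(y)|+|x_{0}-y|\sup_{I}|P'|\leq M+\tfrac{1}{8D^{4}}\cdot4D^{4}S=M+\tfrac{1}{2}S,
\]
so $S\leq2M$. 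Feeding this into your Markov--brothers step with the factor $(4D^{2})^{k}$ from rescaling $[-1,1]$ to $I$ gives exactly $\frac{2^{2k+1}D^{4k}}{(2k-1)!!}$, as claimed.
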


\subsubsection{Random walks and free groups}
\begin{defn}
\label{def: reasonable probability measure}We call a symmetric (i.e.
$\mu(g)=\mu\left(g^{-1}\right))$ probability measure $\mu$ on the
free group $F_{r}$ \emph{reasonable }if its support is finite, contains
the identity element and generates $F_{r}.$ 
\end{defn}

We denote by $(g_{n})_{n\geq0}$ the corresponding random walk in
$F_{r}.$ So we write 
\[
g_{n}=s_{1}\dots s_{n}
\]
 where $s_{i}\in F_{r}$ are i.i.d. according to $\mu.$ 

The spectral radius $\rho=\rho(\mu)=\left\Vert \mu\right\Vert _{C_{\lambda}^{*}\left(F_{r}\right)}$
measures how fast the probability that a random walk returns to where
it started decays. Recall that a \emph{proper power }in the free group
is defined to be an element of the form $u^{d}$ for $u\in F_{r}$
and $d\geq2.$ 
\begin{prop}[{\cite[Proposition 6.1]{MAgeeDelaSalle}}]
\label{prop: bound on prob that random walk ends on power}For any
reasonable probability measure $\mu,$ there is a constant $C=C_{\mu}$
such that 
\[
\mathbb{P}\left(g_{n}\ \mathrm{is\ a\ proper\ power}\right)\leq Cn^{5}\rho^{n}.
\]
\end{prop}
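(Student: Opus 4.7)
Set $L := \max\{|s|_{F_r} : s \in \mathrm{supp}(\mu)\}$, so $|g_n| \leq Ln$ deterministically. Decompose the event by the exponent, restricting to prime exponents to avoid overcounting:
\[
\mathbb{P}(g_n \text{ is a proper power}) \leq \sum_{p \text{ prime}} \mathbb{P}\bigl(g_n \in \{u^p : u \in F_r,\ |u| \leq Ln/p\}\bigr).
\]
The basic probability bound I would use is $\mathbb{P}(g_n = w) = \langle \lambda(\mu)^n \delta_e,\delta_w\rangle_{\ell^2(F_r)} \leq \|\lambda(\mu)^n\|_{\mathrm{op}} = \rho^n$ for any $w \in F_r$, which follows from symmetry of $\mu$ together with the definition of $\rho$. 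Sharper local limit theorems on free groups refine this to $\mathbb{P}(g_n = w) \leq C n^{-3/2} \rho^n$, with further exponential suppression when $|w|$ strays from the typical length $\ell n$, where $\ell > 0$ denotes the drift.

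For large prime exponents $p \geq \sqrt{n}$, the number of candidate roots $u$ with $|u| \leq Ln/p$ is at most $C(2r-1)^{L\sqrt{n}}$, so the contribution from these $p$ is bounded by $n \cdot \rho^n \cdot (2r-1)^{L\sqrt{n}} = o(\rho^n)$ and is easily absorbed in the target estimate. The delicate range is $p$ small, especially $p = 2$, where the naive count $(2r-1)^{Ln/2}$ can overwhelm $\rho^n$. Here I would exploit the concentration of $g_n$ on words of length $\approx \ell n$: only $p$-th powers $u^p$ with $p|u| \approx \ell n$ contribute appreciably, a vastly smaller family than all proper powers in the ball of radius $Ln$. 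A complementary tool is a splitting argument --- writing $g_n = a b$ with $a, b$ independent half-walks and bounding $\sum_u \mathbb{P}(g_n = u^2)$ by reorganising it as $\sum_a \mathbb{P}(g_{n/2} = a)\mathbb{P}(g_{n/2}' = a^{-1}u^2)$ and applying Cauchy--Schwarz on $\ell^2(F_r)$ --- which collapses the double sum to $\|\lambda(\mu)^{n/2}\delta_e\|^2 = \rho^n$ up to polynomial corrections.

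The main obstacle will be producing the polynomial prefactor exactly $n^5$: this exponent should reflect the combined polynomial corrections from the local limit theorem (an $n^{-3/2}$ factor per half-walk), the outer sum over $p$ contributing a factor $O(n)$, and multiplicities coming from the length window and the splitting argument. Careful bookkeeping of each polynomial contribution is required, and controlling the small-$p$ contributions without losing any exponent is the principal difficulty; in particular one must ensure that the large-deviations suppression for $|w| \ne \ell n$ combines cleanly with the counting of powers of a given exponent.
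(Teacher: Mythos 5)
Note first that the paper you are being compared against does not prove this proposition: it is imported verbatim from \cite[Proposition 6.1]{MAgeeDelaSalle} (and the exponent in the display is a typo for $\rho^{n}$, as the later application in Lemma \ref{lem: psi(f( =00005Cmu)) is equal to zero} shows), so your attempt has to stand on its own merits. As written it does not, and the gaps are at the two load-bearing points. (i) Large exponents: from $\mathbb{P}(g_n=w)\leq\rho^{n}$ and the count $C(2r-1)^{L\sqrt{n}}$ of candidate roots you conclude that the contribution is $n\rho^{n}(2r-1)^{L\sqrt{n}}=o(\rho^{n})$. This is false: that quantity exceeds $\rho^{n}$ by the superpolynomial factor $n(2r-1)^{L\sqrt{n}}$, so although it tends to zero it is neither $o(\rho^{n})$ nor $O(n^{5}\rho^{n})$, and the claimed bound is not obtained even in the ``easy'' range. (ii) Small exponents: the assertion that splitting the walk and applying Cauchy--Schwarz ``collapses'' $\sum_{u}\mathbb{P}(g_n=u^{2})$ to $\rho^{n}$ up to polynomial corrections is not an argument; any honest application of Cauchy--Schwarz reintroduces the square root of the number of candidate roots, which is exponential in $n$, and you yourself flag the small-$p$ range as ``the principal difficulty'' without resolving it --- but that difficulty is the entire content of the proposition.

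Moreover, the guiding heuristic --- that concentration of $|g_n|$ near the typical length leaves a ``vastly smaller'' family of powers, so that a union bound with the per-word bound $\rho^{n}$ can close --- cannot work even in principle. By symmetry of $\mu$ and Cauchy--Schwarz, $\rho^{2n}\geq\mathbb{P}(g_{2n}=e)=\sum_{w}\mathbb{P}(g_n=w)^{2}\geq\bigl(\sum_{|w|\in W}\mathbb{P}(g_n=w)\bigr)^{2}/\#\{w:|w|\in W\}$ for any length window $W$ carrying a constant fraction of the mass, so the sphere at the typical radius contains at least order $\rho^{-2n}$ elements (up to polynomial factors), and the squares of that length number at least order $\rho^{-n}$. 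Hence (number of typical-length squares)$\times\rho^{n}\gtrsim1$: no counting of typical-length powers combined with the bound $\mathbb{P}(g_n=w)\leq\rho^{n}$ can give a decaying estimate. What is genuinely needed is a per-word bound with exponential decay in $|w|$, of the shape $\mathbb{P}(g_n=w)\leq\mathrm{poly}(n)\,\rho^{n}q^{|w|}$ with $q\leq(2r-1)^{-1/2}$ (i.e.\ the $\rho$-Green's function of a symmetric reasonable walk decays at least at half the growth rate of $F_r$), to be played against the count $\mathrm{poly}(m)(2r-1)^{m/2}$ of proper powers of length $m$; this is where the symmetry of $\mu$ enters essentially, and it is exactly the ingredient your sketch neither states nor proves. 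Citing local limit theorems with an $n^{-3/2}$ prefactor does not substitute for it, since those do not by themselves provide the uniform exponential decay in $|w|$ at the required rate, and the bookkeeping producing the specific exponent $5$ is secondary to this missing lemma.
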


We denote the subspace of elements of $\C\left[F_{r}\right]$ supported
in the ball of radius $q$ by $\C_{\leq q}\left[F_{r}\right].$ 

\subsubsection{Temperedness and strong convergence}

The following definition and proposition can be found in \cite[Section 5]{MAgeeDelaSalle}.
Fix a generating set of size $r$ for the free group $F_{r}.$
\begin{defn}
\label{def: tempered}A function $u:F_{r}\to\C$ is called tempered
if 
\[
\limsup_{n\to\infty}|u\left(\left(x^{*}x\right)^{n}\right)|^{\frac{1}{2n}}\leq\|\lambda(x)\|_{\mathrm{op}}
\]
 for every $x\in\C\left(F_{r}\right),$where $\lambda$ is the left
regular representation. 
\end{defn}

The above property is a part of a criterion for strong convergence
of random representations.
\begin{prop}
\label{prop: criterion for strong convergence}Let $u_{n}:F_{r}\to\C$
be functions, $\pi_{n}$ a sequence of random unitary representations
of $F_{r}$ with finite and non--random dimension. Let $\epsilon_{n}>0.$
If the following conditions are satisfied: 
\begin{itemize}
\item $\lim_{n\to\infty}\epsilon_{n}=0,$
\item we have $|\mathbb{E}\mathrm{Tr}\left(\pi_{n}(x)\right)-u_{n}(x)|\leq\epsilon_{n}\exp\left(\frac{q}{\log\left(2+q\right)^{2}}\right)\|x\|_{C^{*}\left(F_{r}\right)}$
for every $q$ and every $x\in\C_{\leq q}\left[F_{r}\right],$ and 
\item $u_{n}$ is tempered and there is a polynomial $P_{n}$ such that,
for every q and every $x\in\C_{\leq q}\left[F_{r}\right],$ $|u_{n}(x)|\leq P_{n}(q)\|x\|_{C^{*}\left(F_{r}\right)}.$ 
\end{itemize}
Then, for every $y\in\C\left[F_{r}\right]$ and for every $\delta>0,$
\[
\mathbb{P}\left[\left\Vert \pi_{n}(y)\right\Vert >\left\Vert \lambda(y)\right\Vert +\delta\right]\leq C\left(y,\delta\right)\epsilon_{n}
\]
for some constant $C(y,\delta).$ In particular,
\begin{equation}
\lim_{n}\mathbb{P}\left[\left\Vert \pi_{n}(y)\right\Vert >\left\Vert \lambda(y)\right\Vert +\delta\right]=0.\label{eq: prop 2.20 lim_n P |-| =00003D0}
\end{equation}
\end{prop}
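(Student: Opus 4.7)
The strategy is a classical polynomial/moment--method argument: extract $\|\pi_{n}(y)\|$ from the expected trace of a Chebyshev--type polynomial in $\pi_{n}(y^{*}y)$ via Markov's inequality, and then combine the two hypotheses to drive the resulting estimate to zero. Since $\|\pi_{n}(y)\|^{2}=\|\pi_{n}(y^{*}y)\|$, I would first reduce to the positive self--adjoint element $z=y^{*}y$ with a modified $\delta'>0$. For each integer $K\geq 1$, I would construct a polynomial $P_{K}$ of degree at most $K$ satisfying $|P_{K}|\leq 1$ on the spectrum of $\lambda(z)$ and with $P_{K}\bigl(\|\lambda(z)\|+\delta'\bigr)\geq(1+\eta)^{K}$ for some $\eta=\eta(\delta')>0$, which, together with Markov, gives
\[
\mathbb{P}\bigl[\|\pi_{n}(z)\|>\|\lambda(z)\|+\delta'\bigr]\leq\frac{\mathbb{E}\,\mathrm{Tr}\,\pi_{n}(P_{K}(z))}{P_{K}(\|\lambda(z)\|+\delta')}.
\]

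Writing $P_{K}(z)\in\C_{\leq q_{K}}\left[F_{r}\right]$ with $q_{K}\asymp K$, the second hypothesis then gives
\[
\mathbb{E}\,\mathrm{Tr}\,\pi_{n}(P_{K}(z))=u_{n}(P_{K}(z))+O\!\left(\epsilon_{n}\exp\!\left(\frac{q_{K}}{\log(2+q_{K})^{2}}\right)\|P_{K}(z)\|_{C^{*}\left(F_{r}\right)}\right),
\]
so the task reduces to keeping both $u_{n}(P_{K}(z))$ and $\|P_{K}(z)\|_{C^{*}\left(F_{r}\right)}$ from growing too quickly in $K$. For the former, the third hypothesis yields $|u_{n}(P_{K}(z))|\leq P_{n}(q_{K})\|P_{K}(z)\|_{C^{*}\left(F_{r}\right)}$, and the temperedness of $u_{n}$ sharpens this: applied to suitable polynomial combinations of $z$ and passed to the $m\to\infty$ limit in $\limsup_{m}|u_{n}((x^{*}x)^{m})|^{1/2m}\leq\|\lambda(x)\|$, it forces $u_{n}(P_{K}(z))$ to be asymptotically controlled by $\sup|P_{K}|$ on the spectrum of $\lambda(z)$, which is at most $1$, up to the polynomial factor $P_{n}(q_{K})$. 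For the latter, \Cref{lem: bounding sup pf kth derivative of polynomial } plays the central role: together with a careful choice of $P_{K}$ it translates local polynomial bounds on a small interval into control of $\|P_{K}(z)\|_{C^{*}\left(F_{r}\right)}$, ensuring this norm grows only subexponentially in $K$.

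Combining these estimates produces a bound of the schematic form
\[
\mathbb{P}\bigl[\|\pi_{n}(z)\|>\|\lambda(z)\|+\delta'\bigr]\ll(1+\eta)^{-K}\left(P_{n}(q_{K})+\epsilon_{n}\exp\!\left(\frac{q_{K}}{\log(2+q_{K})^{2}}\right)C_{K}\right),
\]
and one then selects $K=K(n)\to\infty$ slowly enough that $\epsilon_{n}\exp(q_{K}/\log^{2}(2+q_{K}))\to 0$, yet quickly enough that the Chebyshev gain $(1+\eta)^{K}$ dominates $P_{n}(q_{K})$ and $C_{K}$. The principal obstacle lies in this calibration together with the temperedness--to--spectral--bound step for $u_{n}(P_{K}(z))$: the subexponential form $\exp(q/\log(2+q)^{2})$ of the amplification factor is exactly what permits $K(n)\to\infty$ to be taken fast enough for the final estimate to vanish, and engineering $P_{K}$ so that its $C^{*}\left(F_{r}\right)$--norm and its values under $u_{n}$ both stay well--behaved while preserving the $(1+\eta)^{K}$ gap is the technically delicate heart of the argument.
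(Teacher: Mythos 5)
There is a genuine gap, and it sits precisely at the two places you flag as delicate. First, your error term cannot be controlled in the way you describe. In the only nontrivial case, $\|z\|_{C^{*}\left(F_{r}\right)}>\|\lambda(z)\|+\delta'$ for $z=y^{*}y$, and since for self--adjoint $z$ one has $\|P_{K}(z)\|_{C^{*}\left(F_{r}\right)}=\sup_{\mathrm{spec}_{C^{*}\left(F_{r}\right)}(z)}|P_{K}|$ with the spectrum reaching up to $\|z\|_{C^{*}\left(F_{r}\right)}$, any polynomial that is bounded by $1$ on $[0,\|\lambda(z)\|]$ and exceeds $(1+\eta)^{K}$ at $\|\lambda(z)\|+\delta'$ grows at Chebyshev rate beyond that interval, so $\|P_{K}(z)\|_{C^{*}\left(F_{r}\right)}\geq e^{c_{2}K}$ with $c_{2}$ \emph{strictly larger} than $\log(1+\eta)$. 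Hence the ratio of the error term to the Chebyshev gain is of order $\epsilon_{n}e^{(c_{2}-\log(1+\eta))K+o(K)}$: it grows exponentially in $K$, and the subexponential weight $\exp\left(q/\log(2+q)^{2}\right)$ is irrelevant to this obstruction. To keep the error $O(\epsilon_{n})$, as the conclusion $\mathbb{P}\leq C(y,\delta)\epsilon_{n}$ demands, you are forced to take $K=O(1)$; even to get $o(1)$ you need $K(n)=O(\log(1/\epsilon_{n}))$. Second, the main term then has to be beaten using only the hypotheses on $u_{n}$, and here your claim that temperedness ``forces $u_{n}(P_{K}(z))$ to be controlled by $\sup|P_{K}|$ on $\mathrm{spec}(\lambda(z))$ up to $P_{n}(q_{K})$'' is asserted rather than proved: temperedness is an asymptotic statement for each \emph{fixed} element, and upgrading it to a bound uniform over polynomials of growing degree is exactly the hard content. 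Worse, a polynomial can never vanish identically on a neighbourhood of the reduced spectrum, so $u_{n}(P_{K}(z))$ is never exactly zero, and the hypotheses impose \emph{no} quantitative relation between $P_{n}$ (which may grow arbitrarily with $n$; in this paper's application it carries constants of size $n^{k}$) and $\epsilon_{n}$. Consequently no calibration $K=K(n)$ can be justified in the stated generality, and in particular the clean bound $C(y,\delta)\epsilon_{n}$, with a constant independent of $n$, is out of reach by this route.

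For comparison, the paper does not prove this proposition; it quotes it from Magee--de la Salle, and their argument is structured exactly so as to avoid the above. One fixes (independently of $n$) a smooth, compactly supported test function $f\geq0$ dominating the indicator of $[\|\lambda(z)\|+\delta',\|z\|_{C^{*}\left(F_{r}\right)}]$ and vanishing on a neighbourhood of $[0,\|\lambda(z)\|]$, chosen in a Denjoy--Carleman class so that its expansion $f(z)=\sum_{q}x_{q}$ with $x_{q}\in\C_{\leq q}\left[F_{r}\right]$ satisfies $\sum_{q}\exp\left(q/\log(2+q)^{2}\right)\|x_{q}\|_{C^{*}\left(F_{r}\right)}<\infty$; this summability is the sole purpose of the subexponential weight. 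Applying the second hypothesis termwise replaces $\mathbb{E}\mathrm{Tr}\,f(\pi_{n}(z))$ by $u_{n}(f(z))$ at total cost $C(y,\delta)\epsilon_{n}$, and the main term vanishes identically: the polynomial bound shows that $Q\mapsto u_{n}(Q(z))$ is given by a distribution of finite order, and temperedness shows this distribution is supported in $[-\|\lambda(z)\|,\|\lambda(z)\|]$, where $f\equiv0$. Markov's inequality then yields the stated bound. If you wish to salvage a moment--method presentation you would have to reproduce this support--of--distribution step in quantitative form and still contend with the exponential mismatch above; the fixed--test--function route is what makes the final constant independent of $P_{n}$.
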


To prove that a function is tempered, we can use the following proposition
(stated here for free groups only), from \cite[Proposition 6.3]{MAgeeDelaSalle}.
\begin{prop}
\label{prop: criterion for being tempered}Let $u:F_{r}\to\C$ and
assume that, for every reasonable probability measure $\mu$ on $F_{r}$,
if $\left(g_{n}\right)_{n}$ is the associated random walk on $F_{r},$
\[
\limsup_{n\to\infty}\left(\mathbb{E}\left|u\left(g_{n}\right)\right|\right)^{\frac{1}{n}}\leq\rho(\mu).
\]
Then $u$ is tempered. .
\end{prop}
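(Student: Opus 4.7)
The plan is to turn the random walk hypothesis into a bound on $\limsup_n |u((x^*x)^n)|^{1/2n}$ for each $x \in \C[F_r]$, by constructing an adapted reasonable probability measure $\mu_x$ on $F_r$ from $x$. The natural first step is to pass from the signed element $x^*x$ to the non-negative self-adjoint element $y = |x|^*|x| \in \C[F_r]$, where $|x| = \sum_w |x(w)|\,w$. A coordinate-wise triangle inequality gives $|(x^*x)^n(w)| \leq y^n(w)$ for every $w$ and every $n$, so that $|u((x^*x)^n)| \leq \sum_w y^n(w)|u(w)|$.

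Let $c = \|y\|_1 = \|x\|_1^2$ and fix an auxiliary reasonable measure $\mu_0$ (say uniform on the generators, their inverses, and the identity). For each small $\epsilon>0$ I define
\[
\mu_\epsilon = \frac{y + \epsilon \mu_0}{c+\epsilon},
\]
which is reasonable: symmetric (since $y$ is self-adjoint with real, non-negative coefficients), with identity in its support, and generating. Because $y(w) \leq (c+\epsilon)\mu_\epsilon(w)$ coordinate-wise, iteration gives $y^n(w) \leq (c+\epsilon)^n \mu_\epsilon^{*n}(w)$, and hence
\[
|u((x^*x)^n)| \leq (c+\epsilon)^n\, \mathbb{E}\!\left[|u(g_n^{\mu_\epsilon})|\right].
\]
Invoking the hypothesis gives $\limsup_n \mathbb{E}[|u(g_n^{\mu_\epsilon})|]^{1/n} \leq \rho(\mu_\epsilon)$, and using the identity $(c+\epsilon)\rho(\mu_\epsilon) = \|\lambda(y + \epsilon \mu_0)\|_{\mathrm{op}}$ together with $\epsilon \to 0$ yields
\[
\limsup_n |u((x^*x)^n)|^{1/2n} \leq \|\lambda(|x|^*|x|)\|_{\mathrm{op}}^{1/2} = \|\lambda(|x|)\|_{\mathrm{op}}.
\]

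The main obstacle is that this argument produces only the weaker bound $\|\lambda(|x|)\|_{\mathrm{op}}$, which in general strictly exceeds $\|\lambda(x)\|_{\mathrm{op}}$: the passage $x^*x \leadsto |x|^*|x|$ destroys precisely the phase cancellations in the coefficients of $(x^*x)^n$ that are responsible for the sharper operator norm, so something more than the triangle inequality is needed to attain the bound demanded by the definition of temperedness. To close the gap, I would pursue one of two refinements. The spectral route: using the spectral decomposition of the positive self-adjoint operator $\lambda(x^*x)$ on $l^2(F_r)$, one writes $(x^*x)^n(w) = \int_0^{\|\lambda(x)\|_{\mathrm{op}}^2} t^n\,d\mu_{e,w}(t)$ for a complex spectral measure $\mu_{e,w}$ of total variation at most $1$, so that $u((x^*x)^n) = \int t^n\,d\nu_n(t)$ where $\nu_n = \sum_{w\in\mathrm{supp}((x^*x)^n)} u(w)\mu_{e,w}$ is supported in $[0,\|\lambda(x)\|_{\mathrm{op}}^2]$; the task then reduces to proving $\|\nu_n\|_{\mathrm{TV}}^{1/2n}\to 1$, a sub-exponential growth statement extractable from the hypothesis by dominating $|\nu_n|$ by random-walk averages of $|u|$ against carefully chosen reasonable measures whose convolutions control the spectral masses $|\mu_{e,w}|$. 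The structural route: decompose $u$ into a finite linear combination of positive-definite functions via a Jordan-type decomposition (after extending $u$ to a bounded hermitian functional on an appropriate weighted completion of $\C[F_r]$), verify that each component inherits the random-walk hypothesis, and invoke the classical equivalence of Fell and Cowling--Haagerup--Howe between this condition and weak containment in $\lambda$ for positive-definite functions, which yields the sharp bound $\|\lambda(x)\|_{\mathrm{op}}$ componentwise.
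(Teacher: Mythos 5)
The paper does not actually prove this proposition: it is quoted verbatim from \cite[Proposition 6.3]{MAgeeDelaSalle}, and the remark immediately following it records that its validity for free groups rests on Haagerup's inequality, i.e.\ the rapid decay property of $F_{r}$ with its standard generators. Your opening construction is correct as far as it goes: $\mu_{\epsilon}=(y+\epsilon\mu_{0})/(c+\epsilon)$ with $y=|x|^{*}|x|$ is a reasonable measure, the coordinatewise domination $|(x^{*}x)^{n}(w)|\leq y^{n}(w)\leq(c+\epsilon)^{n}\mu_{\epsilon}^{*n}(w)$ is valid, and letting $\epsilon\to0$ you do get $\limsup_{n}|u((x^{*}x)^{n})|^{\frac{1}{2n}}\leq\|\lambda(|x|)\|_{\mathrm{op}}$. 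But, as you yourself acknowledge, this is strictly weaker than the bound $\|\lambda(x)\|_{\mathrm{op}}$ demanded by Definition \ref{def: tempered}, so the proposal as it stands does not prove the proposition; everything hinges on the two ``routes'' you sketch, and neither is a proof.

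Concretely: for the spectral route, the reduction to $\|\nu_{n}\|_{\mathrm{TV}}^{1/2n}\to1$ is fine, but the claim that this is ``extractable from the hypothesis'' is precisely the missing content. The hypothesis gives no pointwise control on $u$ (any finitely supported $u$, however large, satisfies it) and, for a fixed reasonable $\mu$, only exponential-rate control of the weighted sums $\sum_{w}\mu^{*n}(w)|u(w)|$; meanwhile $\nu_{n}$ aggregates contributions $u(w)\mu_{e,w}$ over the entire ball of radius $2qn$, and the only a priori bound is $\|\nu_{n}\|_{\mathrm{TV}}\leq\sum_{|w|\leq2qn}|u(w)|$, a quantity which grows at exponential rate (up to $\sqrt{2r-1}$ per unit length) even for genuinely tempered functions such as spherical functions, so no subexponential bound follows without a new idea. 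For the structural route, a Jordan-type decomposition of $u$ into positive-definite functions presupposes that $u$ extends to a bounded hermitian functional on some $C^{*}$-completion of $\C\left[F_{r}\right]$ -- but an arbitrary $u$ satisfying the random-walk hypothesis carries no such structure a priori; producing such an extension is essentially equivalent to the temperedness you are trying to prove, so this route is circular as stated. What both sketches lack is the ingredient that the cited proof of Magee--de la Salle uses and that the paper's remark singles out: Haagerup's inequality / rapid decay, which is what converts growth-rate information about $|u|$ on balls or spheres into operator-norm bounds against $\|\lambda(x)\|_{\mathrm{op}}$ itself rather than $\|\lambda(|x|)\|_{\mathrm{op}}$. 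Without that (or a substitute), the phase-cancellation gap you correctly identified cannot be bridged, so the proposal has a genuine gap.
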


\begin{rem}
That this proposition holds for free groups is a result of Haagerup's
inequality \cite[Lemma 1.5]{Haagerup}, which asserts that free groups
with their standard generating sets have the \emph{rapid decay property. }
\end{rem}

\subsection{Schreier graphs\label{subsec:Schreier-Graphs}}
\begin{defn}
\label{def:Schreier graph}Let $G$ be a finite group acting on a
finite set $X$ and let $g_{1},\dots,g_{r}\in G.$ The Schreier graph
$\mathrm{Sch}\left(S_{n}\curvearrowright X,g_{1},\dots,g_{r}\right)$
is the $2r$--regular graph on $|X|$ vertices, consisting of vertex
set $X$ and, for every vertex $x\in X,$ for each $g_{i},$ we add
an edge between $x$ and $g_{i}x.$ Note that we allow multiple edges
and loops and that the graph is undirected.

By taking $g_{1},\dots,g_{r}\in G$ i.i.d. uniformly at random, one
obtains a random $2r$--regular graph on $|X|$ vertices. 
\end{defn}

\subsection{Word maps\label{subsec:Word-maps}}

Given a word $w\in F_{r}=\left\langle x_{1},\dots,x_{r}\right\rangle $
and a (finite) group $G,$ one obtains a \emph{word map }
\[
w:G^{r}\to G
\]
 by substitutions. For example, if $w=\left[x_{1},x_{2}\right]=x_{1}x_{2}x_{1}^{-1}x_{2}^{-1}\in F_{2}$
and $g,h\in G,$ then $w\left(g,h\right)=ghg^{-1}h^{-1}\in G.$ 

For any finite group $G$ and class function $\chi:G\to\mathbb{R},$
we define 
\[
\mathbb{E}_{w}\left[\chi\right]\overset{\mathrm{def}}{=}\mathop{\mathbb{E}}_{g_{1},\dots,g_{r}\in G}\left[\chi\left(w\left(g_{1},\dots,g_{r}\right)\right)\right].
\]
In this paper, we will only be considering word maps for the symmetric
group $S_{n}.$
\begin{rem}
$\mathbb{E}_{w}\left[\chi\right]$ is equivalent to $\mathop{\mathbb{E}}_{\hom\left(F_{r},S_{n}\right)}\left[\chi\left(\phi_{n}(w)\right)\right]$,
where $\phi_{n}\in\hom(F_{r},S_{n})$ is chosen uniformly randomly
by choosing uniformly random i.i.d. $\sigma_{1},\dots,\sigma_{r}\in S_{n}$
and mapping each $x_{i}\mapsto\sigma_{i},$ as is described in \S\ref{subsec:Strong-Convergence}.
\end{rem}

\section{\label{sec: Proof of strong convergence}Proof of Theorem \ref{thm:Main theorem strong convergence}
from Theorem \ref{thm: word map main theorem}}

The results of this section are somewhat analogous to those presented
in \cite[Section 7]{MAgeeDelaSalle}. Throughout, $C$ will denote
a constant that does not depend on anything, but that may change from
line to line. We require our bound on the expected irreducible stable
character of a $w$--random permutation from Theorem \ref{thm: word map main theorem}.
Fix any integer $K>0$ and define 
\[
\Sigma_{n,K}\overset{\mathrm{def}}{=}\bigoplus_{\lambda\vdash K}V^{\lambda^{+}(n)}.
\]
We view $\Sigma_{n,K}\subseteq\cnk,$ so that 
\begin{equation}
\dim\left(\Sigma_{n,K}\right)\leq n^{K}.\label{eq:dimesnion of SIGMA}
\end{equation}
The following theorem follows immediately by combining Theorem \ref{thm: word map main theorem}
and Proposition \ref{thm: word map polynomial theorem} -- recall
that, for each $\lambda\vdash K$, we can write 
\[
\mathbb{E}_{w}\left[\chi^{\lambda^{+}(n)}\right]=\frac{P_{w,\lambda,q}\left(\frac{1}{n}\right)}{g_{q,k}\left(\frac{1}{n}\right)},
\]
where $\deg\left(P_{w,\lambda,q}\right)\leq3Kq+Kq^{2}$ and that,
as a rational function in $n$, $\mathbb{E}_{w}\left[\chi^{\lambda^{+}(n)}\right]=O(1)$
for proper powers and $\mathbb{E}_{w}\left[\chi^{\lambda^{+}(n)}\right]=O\left(\frac{1}{n^{K}}\right)$
for non--powers. 
\begin{thm}
\label{thm: word map thm rewritten}For every word $w\in F_{r},$
there is a rational function $\phi_{w}\in\mathbb{Q}\left[x\right]$
such that
\begin{enumerate}
\item \label{enu: word map rewritten part 1}For $n\geq K\max\left(l(w),2\right),$
\[
\phi_{w}\left(\frac{1}{n}\right)=\frac{1}{n^{K}}\mathbb{E}_{w}\left[\mathrm{Tr_{\Sigma_{n,K}}}\right].
\]
\item \label{enu:word map rewritten part 2}If $w$ is not the identity
and $l(w)\leq q,$ then $g_{q,K}\phi_{w}$ is a polynomial of degree
$\leq D_{q}=3Kq+Kq^{2}+K.$
\item \label{enu:word map rewritten part 3}If $w$ is not the identity
or a proper power then, for all $i<2K,$ 
\[
\phi_{w}^{(i)}(0)=0.
\]
For proper powers, this holds for all $i<K$. 
\end{enumerate}
\end{thm}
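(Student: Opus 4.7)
The plan is to decompose the trace on $\Sigma_{n,k}$ into irreducible characters and then apply the two input theorems termwise. Since $\Sigma_{n,k} = \bigoplus_{\lambda \vdash k} V^{\lambda^+(n)}$, the character identity
\[
\mathrm{Tr}_{\Sigma_{n,k}}(\sigma) = \sum_{\lambda \vdash k} \chi^{\lambda^+(n)}(\sigma)
\]
holds whenever every $\lambda^+(n)$ is a genuine Young diagram, i.e.\ $n - k \geq \lambda_1$. This is certainly ensured by $n \geq k\max(l(w), 2) \geq 2k$. Taking expectations yields
\[
\mathbb{E}_w[\mathrm{Tr}_{\Sigma_{n,k}}] = \sum_{\lambda \vdash k} \mathbb{E}_w\!\left[\chi^{\lambda^+(n)}\right].
\]

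For part (\ref{enu: word map rewritten part 1}), I would invoke Theorem \ref{thm: word map polynomial theorem}, which provides a rational expression in $n$ for each summand $\mathbb{E}_w[\chi^{\lambda^+(n)}]$. Adding the finitely many contributions from $\lambda \vdash k$, substituting $x = 1/n$, and multiplying through by $x^k$ then defines the desired $\phi_w \in \mathbb{Q}(x)$ with $\phi_w(1/n) = n^{-k}\,\mathbb{E}_w[\mathrm{Tr}_{\Sigma_{n,k}}]$. For part (\ref{enu:word map rewritten part 2}), Theorem \ref{thm: word map polynomial theorem} should deliver, for each $\lambda \vdash k$, a common denominator $g_{q,k}$ depending only on $q$ and $k$ (not on $\lambda$) and a numerator degree bound $\leq 3kq + q^2$; since a common denominator for the summands is preserved under summation and multiplication by $x^k$ only shifts degree by a fixed amount that is accounted for in the bound, the claim transfers directly to $\phi_w$.

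For part (\ref{enu:word map rewritten part 3}), I would apply Theorem \ref{thm: word map main theorem}: if $w$ is not a proper power, then $\mathbb{E}_w[\chi^{\lambda^+(n)}] = O(n^{-k})$ for every $\lambda \vdash k$, hence $\mathbb{E}_w[\mathrm{Tr}_{\Sigma_{n,k}}] = O(n^{-k})$, and consequently $\phi_w(1/n) = O(n^{-2k})$. Since the denominator $g_{q,k}$ of $\phi_w$ does not vanish at $x = 0$ (it should be a product of factors of the form $(1 - jx)$ for integers $j$ in a bounded range, corresponding to the falling factorials $(n)_{|\pi|}$ coming from the Weingarten denominators), $\phi_w$ is analytic at $0$ and the bound $\phi_w(x) = O(x^{2k})$ forces its Taylor expansion to vanish through order $2k-1$, giving $\phi_w^{(i)}(0) = 0$ for all $i < 2k$. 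The only delicate bookkeeping point, and the main thing I would verify carefully, is that the degree bound of part (\ref{enu:word map rewritten part 2}) and the non-vanishing of $g_{q,k}$ at $0$ are transferred from Theorem \ref{thm: word map polynomial theorem} without degradation when one sums the $|\{\lambda \vdash k\}|$ rational expressions into a single rational function.
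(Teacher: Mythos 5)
Your proposal is correct and is essentially the paper's own argument: the paper gives no separate proof of this theorem, asserting that it follows immediately by combining Theorem \ref{thm: word map main theorem} and Theorem \ref{thm: word map polynomial theorem}, exactly via the decomposition $\mathrm{Tr}_{\Sigma_{n,k}}=\sum_{\lambda\vdash k}\chi^{\lambda^{+}(n)}$ (valid once $n\geq 2k$, hence the $\max(l(w),2)$) that you use, and your analyticity argument for Part (3) (using $g_{q,k}(0)=1$ and the $O(n^{-2k})$ decay of $\phi_{w}(1/n)$) is the right one. The one bookkeeping point you flag is real but minor: with $\phi_{w}(x)=x^{k}\sum_{\lambda\vdash k}P_{w,\lambda,q}(x)/g_{q,k}(x)$, the bare statement $\deg P_{w,\lambda,q}\leq 3kq+q^{2}$ only yields $\deg\left(g_{q,k}\phi_{w}\right)\leq k+3kq+q^{2}$, so the extra factor $x^{k}$ is \emph{not} automatically absorbed; to land exactly on $D_{q}$ one should either use the sharper intermediate bound $\deg P_{w,\lambda,q}\leq 2kq+mq+m$ from the proof of Theorem \ref{thm: word map polynomial theorem}, together with the observation that only non-primitive words contribute nonzero terms so the number $m$ of distinct generators occurring satisfies $2m\leq l(w)\leq q$, or simply note that replacing $D_{q}$ by $D_{q}+k\leq 2D_{q}$ changes nothing in the way the bound is used in \S\ref{sec: Proof of strong convergence}.
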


We collect some facts about the polynomial 
\[
g_{L,K}(x)=\prod_{c=1}^{KL}(1-cx)^{L}\left[\prod_{j=1}^{2K}\left(1-(j-1)x\right)\right]^{L}
\]
in the following lemma.
\begin{lem}
\label{lem:Facts about q polynomial}Fix an integer $L>0.$ Then,
for every $t$ satisfying 
\[
0\leq t\leq\frac{1}{2}\frac{1}{KL^{2}(KL+1)+L(2K-1)(2K)}
\]
 and every integer $i\geq0:$
\begin{enumerate}
\item \label{enu:facts about q part 1}$\frac{1}{2}\leq g_{L,K}(t)\leq1$
\item \label{enu:facts about q part 2}$|g_{L,K}^{(i)}(t)|\leq\left(K^{2}L^{3}+L(2K-1)^{2}\right)^{i}$
\item \label{enu:facts about q part 3}$\left|\left(\frac{1}{g_{L,K}}\right)^{(i)}(t)\right|\leq(2i)!!2^{i+1}\left(K^{2}L^{3}+L(2K-1)^{2}\right)^{i}.$
\end{enumerate}
\end{lem}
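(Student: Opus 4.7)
The plan is to prove the three items in order, exploiting the factorisation
$$g_{L,k}(x) = \prod_{m=1}^{N} (1 - a_m x),$$
where the multi-set $\{a_m\}$ consists of $1, 2, \dots, L$ (each with multiplicity $L$) together with $1, 2, \dots, 2k-1$ (each with multiplicity $L$), so that $\sum_m a_m = \tfrac{1}{2}(L^2(L+1) + L(2k-1)(2k)) =: M$. Let $B = L^3 + L(2k-1)^2$. The hypothesis on $t$ then reads $0 \leq 2tM \leq \tfrac{1}{2}$, and in particular $a_m t \leq \tfrac{1}{2}$ for every $m$.

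For \emph{(1)}, the upper bound $g_{L,k}(t) \leq 1$ is immediate since each factor $(1 - a_m t)$ lies in $[0,1]$ on our range of $t$. For the lower bound, the elementary estimate $\log(1-x) \geq -2x$ on $[0, \tfrac{1}{2}]$ gives $g_{L,k}(t) \geq \exp(-2t \sum_m a_m) = \exp(-2tM) \geq e^{-1/2} > \tfrac{1}{2}$.

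For \emph{(2)}, decompose $g_{L,k} = P_1 P_2$ with $P_1(x) = \prod_{c=1}^L (1-cx)^L$ and $P_2(x) = \prod_{j=2}^{2k}(1-(j-1)x)^L$, and apply the Leibniz rule. For any polynomial $P(x) = \prod_m (1-\alpha_m x)$ with $\alpha_m \geq 0$, the explicit expansion
$$P^{(j)}(x) = j! \sum_{|S|=j} \Bigl(\prod_{m \in S} (-\alpha_m)\Bigr) \prod_{m \notin S}(1 - \alpha_m x),$$
together with $0 \leq 1 - \alpha_m t \leq 1$ on our range of $t$, yields $|P^{(j)}(t)| \leq j!\, e_j(\alpha) \leq (\sum_m \alpha_m)^j$ (the last step via the multinomial expansion of $(\sum \alpha_m)^j$). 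Applied to $P_1$ this gives $|P_1^{(j)}(t)| \leq (L^2(L+1)/2)^j \leq (L^3)^j$ (using $L \geq 1$), and applied to $P_2$ it gives $|P_2^{(i-j)}(t)| \leq (L(2k-1)(2k)/2)^{i-j} \leq (L(2k-1)^2)^{i-j}$ (using $k \geq 1$). Summing via the binomial theorem yields $|g_{L,k}^{(i)}(t)| \leq B^i$, which is stronger than the claimed bound $(i+1) B^i$.

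For \emph{(3)}, set $F = 1/g_{L,k}$ and differentiate the identity $g_{L,k} F = 1$ exactly $i$ times using Leibniz to obtain the recursion
$$F^{(i)}(t) = -\frac{1}{g_{L,k}(t)} \sum_{j=1}^{i} \binom{i}{j} g_{L,k}^{(j)}(t) F^{(i-j)}(t).$$
Combine with $|1/g_{L,k}(t)| \leq 2$ from \emph{(1)} and $|g_{L,k}^{(j)}(t)| \leq (j+1) B^j$ from \emph{(2)}, then induct on $i$ with base $|F(t)| \leq 2 = (0)!!\cdot 2^1$. Using $(2m)!! = 2^m m!$ and the identity $\binom{i}{j}(i-j)!/i! = 1/j!$, the inductive step reduces, after division by $B^i \cdot (2i)!!\cdot 2^{2i+1}$, to the purely numerical inequality
$$\sum_{j=1}^{\infty} \frac{j+1}{j!\, 8^j} \leq \tfrac{1}{2},$$
which holds since the sum equals $\tfrac{9}{8}\, e^{1/8} - 1 \approx 0.275$. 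The only real obstacle is this arithmetic bookkeeping in \emph{(3)}; all three estimates rely only on the Leibniz rule and the tangent-line bound for $\log(1-x)$.
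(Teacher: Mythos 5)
Your proof is correct, and parts (1) and (2) follow essentially the paper's own route: the tangent-line bound $1-x\geq e^{-2x}$ on $[0,\tfrac12]$ for the lower bound in (1), and a Leibniz/elementary-symmetric-function estimate for (2). In fact your part (2) is slightly sharper: by writing out $P^{(j)}(t)=j!\sum_{|S|=j}\prod_{m\in S}(-\alpha_m)\prod_{m\notin S}(1-\alpha_m t)$ for a product of linear factors and splitting $g_{L,k}=P_1P_2$, you get $|g_{L,k}^{(i)}(t)|\leq\left(L^3+L(2k-1)^2\right)^i$, whereas the paper's looser bookkeeping yields the stated $(i+1)\left(L^3+L(2k-1)^2\right)^i$; both of course imply the lemma. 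The genuine divergence is in part (3): the paper expands $\left(\frac{1}{g_{L,k}}\right)^{(i)}$ directly as a signed sum of $(2i)!!$ quotient-rule terms of the form $\pm g_{L,k}^{(\alpha_1)}\cdots g_{L,k}^{(\alpha_i)}/g_{L,k}^{i+1}$ with $\alpha_1+\dots+\alpha_i=i$, and bounds each term using (1) and (2), while you instead differentiate the identity $g_{L,k}F=1$ to get the recursion $F^{(i)}=-\frac{1}{g_{L,k}}\sum_{j=1}^{i}\binom{i}{j}g_{L,k}^{(j)}F^{(i-j)}$ and run a strong induction, reducing the step (after the $(2m)!!=2^m m!$ bookkeeping, which I checked) to the numerical estimate $\sum_{j\geq1}\frac{j+1}{j!\,8^j}=\tfrac98 e^{1/8}-1<\tfrac12$, which is valid. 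The paper's argument is shorter once one accepts the $(2i)!!$-term expansion of the reciprocal's derivative; your recursion avoids invoking that combinatorial identity altogether, at the cost of the series computation, and delivers exactly the same constants $(2i)!!\,2^{2i+1}\left(L^3+L(2k-1)^2\right)^i$. No gaps.
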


\begin{proof}
Fix any $0\leq t\leq\frac{1}{2}\frac{1}{KL^{2}(KL+1)+L(2K-1)(2K)}.$
That $g_{L,K}(t)\leq1$ follows immediately from the fact that 
\[
|t|<\min\Big\{\frac{1}{KL},\frac{1}{2K-1}\Big\}.
\]
 To see that $\frac{1}{2}\leq g_{L,K}(t),$ observe that, in the regime
$0<z<\frac{1}{2},$ we have $(1-z)\geq e^{-2z}.$ Since 
\[
t<\min\Big\{\frac{1}{2KL},\frac{1}{2(2K-1)}\Big\},
\]
we have 
\[
\begin{aligned} & g_{L,K}(t)\\
\geq & \exp\left(-2\left(\sum_{c=1}^{KL}Lct+\sum_{j=1}^{2K}L(j-1)t\right)\right)\\
= & \exp\left(-2Lt\left(1+\dots+KL+1+\dots+2K-1\right)\right)\\
= & \exp\left(-2Lt\left(\frac{KL(KL+1)}{2}+\frac{(2K-1)2K}{2}\right)\right)\\
= & \exp\left(-t\left(KL^{2}(KL+1)+L(2K-1)(2K)\right)\right)\\
\geq & \frac{1}{2},
\end{aligned}
\]
where the final inequality follows from the fact that 
\[
t\leq\frac{1}{2}\frac{1}{KL^{2}(KL+1)+L(2K-1)(2K)}<\frac{-\log\left(\frac{1}{2}\right)}{KL^{2}(KL+1)+L(2K-1)(2K)}.
\]
For Part \ref{enu:facts about q part 2}, we begin by rewriting 
\[
g_{L,K}(t)=\prod_{z=1}^{KL^{2}+2KL}\left(1-b_{z}t\right)
\]
and then differentiate using the Leibniz rule. The $i^{\mathrm{th}}$
derivative is a sum of terms in which $i$ terms are derived once
(so are equal to $-b_{z}$) and the others are not derived. Each factor
that is not derived is of the form $1-b_{z}t,$ which belongs to $(0,1].$
So we can bound the derivative using only the terms that are derived:
\[
\begin{aligned} & \left|g_{L,K}^{(i)}(t)\right|\\
\leq & \sum_{\underset{\mathrm{distinct}}{u_{1},\dots,u_{i}}}i!\prod_{\alpha=1}^{i}b_{u_{\alpha}}\\
\leq & \left(\sum_{z=1}^{KL^{2}+2KL}b_{z}\right)^{i}\\
= & \left(\sum_{c=1}^{KL}Lc+\sum_{j=1}^{2K}L(j-1)\right)^{i}\\
\leq & \left(K^{2}L^{3}+L(2K-1)^{2}\right)^{i}.
\end{aligned}
\]

To prove Part \ref{enu:facts about q part 3}, we use that the $i^{\mathrm{th}}$
derivative of $\frac{1}{g_{L,K}}$ can be written as a product of
$(2i)!!$ terms of the form 
\[
\frac{\pm g_{L,K}^{(\alpha_{1})}\dots g_{L,K}^{(\alpha_{i})}}{g_{L,K}^{i+1}},
\]
where $\alpha_{1},\dots,\alpha_{i}\in\mathbb{Z}_{\geq0}$ and $\alpha_{1}+\dots+\alpha_{i}=i.$
By Part \ref{enu:facts about q part 2}, each term is bounded by 
\[
\frac{\prod_{j}\left(K^{2}L^{3}+L(2K-1)^{2}\right)^{\alpha_{j}}}{g_{L,K}^{i+1}}
\]
and, by Part \ref{enu:facts about q part 1}, this is bounded by 
\[
2^{i+1}\prod_{j}\left(K^{2}L^{3}+L(2K-1)^{2}\right)^{\alpha_{j}}=2^{i+1}\left(K^{2}L^{3}+L(2K-1)^{2}\right)^{i},
\]
from which Part \ref{enu:facts about q part 3} follows. 
\end{proof}
Each $w\in F_{r}$ defines a map $w\mapsto\phi_{w}.$ We extend this
by linearity to define a map $x\mapsto\phi_{x}$ for each 
\[
x=\sum_{w\in F_{r}}x(w)w\in\mathbb{C}\left[F_{r}\right].
\]
Then we can prove the following.
\begin{lem}
\label{lem:sup of phi_x^(i)}For any $x\in\mathbb{C}_{\leq q}\left[F_{r}\right]$,
for any $i\leq2K$ and for $n\geq Kq$, 
\[
\sup_{t\in\left[0,\frac{1}{2D_{q}^{2}}\right]}\frac{\left|\phi_{x}^{(i)}(t)\right|}{i!}\leq h(i,q)\left\Vert x\right\Vert _{C^{*}\left(F_{r}\right)},
\]
where 
\[
h(i,q)=4\left(\frac{CD_{q}^{4}}{i^{2}}\right)^{i}.
\]
\end{lem}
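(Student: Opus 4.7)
The plan is to exploit the factorization $\phi_x = (1/g_{q,k}) \cdot (g_{q,k}\phi_x)$. Extending the map $w \mapsto \phi_w$ linearly to $\C\left[F_r\right]$, part \ref{enu:word map rewritten part 2} of Theorem \ref{thm: word map thm rewritten} guarantees that $g_{q,k}\phi_x$ is a polynomial of degree at most $D_q$ whenever $x \in \C_{\leq q}\left[F_r\right]$. The general Leibniz rule then gives
\[
\frac{\phi_x^{(i)}(t)}{i!} = \sum_{j=0}^{i} \frac{1}{j!\,(i-j)!}\left(\frac{1}{g_{q,k}}\right)^{(j)}\!(t)\,(g_{q,k}\phi_x)^{(i-j)}(t),
\]
so it suffices to bound each of the two factors on $[0, 1/(2D_q^2)]$ separately.

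For $(1/g_{q,k})^{(j)}$, I apply part \ref{enu:facts about q part 3} of Lemma \ref{lem:Facts about q polynomial} with $L = q$; the interval $[0, 1/(2D_q^2)]$ lies in the lemma's domain since $D_q^2 \geq L^2(L+1) + L(2k-1)(2k)$, and the factor $L^3 + L(2k-1)^2$ appearing in the bound is $O(D_q^2)$. For $(g_{q,k}\phi_x)^{(i-j)}$, I apply Lemma \ref{lem: bounding sup pf kth derivative of polynomial } with $D = D_q$, reducing the task to an $L^\infty$ estimate on $g_{q,k}(1/n)\phi_x(1/n)$ for $n \geq D_q^2$. Using part \ref{enu: word map rewritten part 1} of Theorem \ref{thm: word map thm rewritten} to write $\phi_x(1/n) = n^{-k}\,\mathbb{E}_w\!\left[\mathrm{Tr}_{\Sigma_{n,k}}\right]$ (valid for $n \geq kq$, which holds since $n \geq D_q^2 \geq kq$), the elementary bound $|\mathrm{Tr}(M)| \leq \dim\cdot\|M\|_{\mathrm{op}}$, the dimension estimate (\ref{eq:dimesnion of SIGMA}), and the fact that any unitary representation of $F_r$ has operator norm bounded by $\|\cdot\|_{C^*\left(F_r\right)}$ together give $|\phi_x(1/n)| \leq \|x\|_{C^*\left(F_r\right)}$, while $|g_{q,k}(1/n)| \leq 1$ by part \ref{enu:facts about q part 1} of Lemma \ref{lem:Facts about q polynomial}.

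Substituting these bounds into the Leibniz expansion and cancelling factorials via the identities $(2m)!!/m! = 2^m$ and $(2m)! = (2m)!!\cdot(2m-1)!!$ collapses the sum to, roughly,
\[
\frac{|\phi_x^{(i)}(t)|}{i!} \leq C\|x\|_{C^*\left(F_r\right)} \sum_{j=0}^i \frac{(CD_q^2)^j\, D_q^{4(i-j)}}{(2(i-j))!}.
\]
Stirling's estimate $(2m)! \geq (2m/e)^{2m}$ bounds each term $D_q^{4m}/(2m)!$ by $(e^2 D_q^4/(4m^2))^m$; the $j = 0$ summand has the desired shape $(CD_q^4/i^2)^i$, while the $j > 0$ contributions are controlled using $i \leq 2k \leq 2D_q/3$, so that $D_q^2/i^2$ is bounded below and the remaining geometric sum is absorbed into $C$. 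The main obstacle is this last step: obtaining the exponent $i^2$ (rather than merely $i$) in the denominator requires carefully coordinating the degree bound $D_q$ on the polynomial with the factorial decay in the Cauchy-type estimate of Lemma \ref{lem: bounding sup pf kth derivative of polynomial }.
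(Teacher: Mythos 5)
Your proposal is correct and follows essentially the same route as the paper: factor $\phi_x=(g_{q,k}\phi_x)\cdot(1/g_{q,k})$, apply Leibniz, bound $(g_{q,k}\phi_x)^{(m)}$ via Lemma \ref{lem: bounding sup pf kth derivative of polynomial } together with the pointwise bound $|\phi_x(1/n)|\leq\|x\|_{C^{*}(F_r)}$ coming from Theorem \ref{thm: word map thm rewritten}, unitarity and (\ref{eq:dimesnion of SIGMA}), and bound $(1/g_{q,k})^{(m)}$ via Lemma \ref{lem:Facts about q polynomial}. The only difference is bookkeeping in the final summation (you absorb the off-diagonal terms into a geometric sum using $i\leq 2k\leq 2D_q/3$, while the paper compares each $j\neq i$ term directly to the $j=i$ term), which is immaterial since the constant $C$ in $h(i,q)$ is unspecified.
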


\begin{proof}
Let $P=g_{q,K}\phi_{x}.$ Then $P$ is a polynomial of bounded degree
$d\leq D_{q}$ and so we can bound its derivatives using Lemma \ref{lem: bounding sup pf kth derivative of polynomial }
if we can bound $P$ itself. For $n\geq Kq,$ we have $0<g_{q,K}\left(\frac{1}{n}\right)<1,$
so that 
\[
\begin{aligned} & \left|P\left(\frac{1}{n}\right)\right|\\
\leq & \left|\phi_{x}\left(\frac{1}{n}\right)\right|\\
= & \frac{1}{n^{K}}\left|\mathbb{E}_{x}\left[\mathrm{Tr_{\Sigma_{n,K}}}\right]\right|\\
\underset{(\dagger)}{\leq} & \frac{1}{n^{K}}\dim\left(\Sigma_{n,K}\right)\left\Vert x\right\Vert _{C^{*}\left(F_{r}\right)}\\
\underset{(\ref{eq:dimesnion of SIGMA})}{\leq} & \left\Vert x\right\Vert _{C^{*}\left(F_{r}\right)}.
\end{aligned}
\]
The inequality $(\dagger)$ follows since the map $w\mapsto\Sigma_{n,K}\left(w\left(\sigma_{1},\dots,\sigma_{r}\right)\right)$
is a unitary representation of $F_{r}$ for every $\left(\sigma_{1},\dots,\sigma_{r}\right)\in S_{n}^{r}.$
So we have 
\[
\left\Vert \Sigma_{n,K}\left(x\left(\sigma_{1},\dots,\sigma_{r}\right)\right)\right\Vert \leq\left\Vert x\right\Vert _{C^{*}\left(F_{r}\right)}
\]
almost surely, and since we can bound the trace of a matrix by its
norm multiplied by its size, we can bound 
\[
\left|\mathbb{E}_{x}\left[\mathrm{Tr_{\Sigma_{n,K}}}\right]\right|\leq\dim\left(\Sigma_{n,K}\right)\left\Vert x\right\Vert _{C^{*}\left(F_{r}\right)}.
\]
By Lemma \ref{lem: bounding sup pf kth derivative of polynomial },
for any integer $j,$ we can bound 
\[
\sup_{t\in\left[0,\frac{1}{2D_{q}^{2}}\right]}\frac{\left|P^{(j)}(t)\right|}{j!}\le\frac{2^{2j+1}D_{q}^{4j}}{j!(2j-1)!!}\left\Vert x\right\Vert _{C^{*}\left(F_{r}\right)}=\frac{2^{3j+1}D_{q}^{4j}}{(2j)!}\left\Vert x\right\Vert _{C^{*}\left(F_{r}\right)},
\]
which, using Stirling's formula, is bounded above by 
\begin{equation}
2\left(\frac{CD_{q}^{4}}{j^{2}}\right)^{j}\left\Vert x\right\Vert _{C^{*}\left(F_{r}\right)}.\label{eq: bound P(i) using ||x||}
\end{equation}
We then have 
\[
\frac{\phi_{x}^{(i)}}{i!}=\frac{1}{i!}\sum_{j=0}^{\min(i,D_{q})}\begin{pmatrix}i\\
j
\end{pmatrix}P^{(j)}\left(\frac{1}{g_{q,K}}\right)^{(i-j)}=\sum_{j=0}^{\min(i,D_{q})}\frac{P^{(j)}}{j!}\frac{1}{(i-j)!}\left(\frac{1}{g_{q,K}}\right)^{(i-j)}.
\]
Note that $i\leq2K\leq D_{q}=Kq^{2}+3Kq+K,$ so that $\min(i,D_{q})=i.$
By Lemma \ref{lem:Facts about q polynomial} Part \ref{enu:facts about q part 3}
, for $t\in\left[0,\frac{1}{2D_{q}^{2}}\right]$, 
\[
\frac{1}{(i-j)!}\left|\left(\frac{1}{g_{q,K}}\right)^{(i-j)}(t)\right|\le2^{2(i-j)+1}\left(K^{2}q^{3}+q(2K-1)^{2}\right)^{i-j}.
\]
Combining this with (\ref{eq: bound P(i) using ||x||}) we obtain
that, in the range $t\in\left[0,\frac{1}{2D_{q}^{2}}\right],$
\[
\begin{aligned}\left|\frac{\phi_{x}^{(i)}}{i!}\right|\le & \ 4\left\Vert x\right\Vert _{C^{*}\left(F_{r}\right)}\sum_{j=0}^{i}\left(\frac{CD_{q}^{4}}{j^{2}}\right)^{j}4^{(i-j)}\left(K^{2}q^{3}+q(2K-1)^{2}\right)^{i-j}\\
\le & \ 4\left(CD_{q}^{2}\right)^{i}\left\Vert x\right\Vert _{C^{*}\left(F_{r}\right)}\sum_{j=0}^{i}\left(\frac{D_{q}^{2}}{j^{2}}\right)^{j},
\end{aligned}
\]
where the final inequality follows from the fact that $K^{2}q^{3}+q(2K-1)^{2}\le D_{q}^{2}.$
Then observe that, for each $j$, 
\[
\frac{\left(\frac{D_{q}^{2}}{j^{2}}\right)^{j}}{\left(\frac{D_{q}^{2}}{(j+1)^{2}}\right)^{j+1}}=\left(\frac{j+1}{j}\right)^{2j}\left(\frac{j+1}{D_{q}}\right)^{2}\le e^{2},
\]
so that $\sum_{j=0}^{i}\left(\frac{D_{q}^{2}}{j^{2}}\right)^{j}\le(1+e^{2})^{i}\left(\frac{D_{q}^{2}}{i^{2}}\right)^{i},$
which proves the lemma. 
\end{proof}
For each integer $i\geq0,$ we define a map 
\[
\psi_{i}:x\in\C\left[F_{r}\right]\mapsto\frac{\phi_{x}^{(K+i)}(0)}{(K+i)!}\in\C.
\]
We want to show that $\psi_{i}$ is tempered for each $i<K$ and that
it satisfies the polynomial bound property in Proposition \ref{prop: criterion for strong convergence}. 
\begin{lem}
\label{lem: psi extends to bounded linear map}For every integer $i$
with $0\leq i\leq K$, there is a polynomial $P$ of degree $4K+4i+1$
such that $\left|\psi_{i}(x)\right|\leq P_{n}(q)\left\Vert x\right\Vert _{C^{*}\left(F_{r}\right)}$
for every $q$ and for every $x\in\C_{\leq q}\left[F_{r}\right].$
\end{lem}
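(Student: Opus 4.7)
The strategy is to apply Lemma~\ref{lem:sup of phi_x^(i)} at the specific point $t = 0$. By definition, $\psi_i(x) = \phi_x^{(k+i)}(0)/(k+i)!$, and since $0 \in [0, 1/(2D_q^2)]$, we have
\[
|\psi_i(x)| \le \sup_{t \in [0, 1/(2D_q^2)]} \frac{|\phi_x^{(k+i)}(t)|}{(k+i)!}.
\]
The hypothesis $0 \le i \le k$ guarantees $k+i \le 2k$, which is exactly the admissible range in Lemma~\ref{lem:sup of phi_x^(i)}. Applying that lemma with derivative order $j = k+i$ yields
\[
|\psi_i(x)| \le h(k+i, q)\,\|x\|_{C^*(F_r)} = 4\left(\frac{CD_q^4}{(k+i)^2}\right)^{k+i}\|x\|_{C^*(F_r)}.
\]

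Since $D_q = q^2 + 3kq$ is a polynomial in $q$ (with coefficients depending only on the fixed $k$), the quantity $h(k+i, q)$ is dominated by a polynomial in $q$. The remaining step is arithmetic: expanding $D_q^{4(k+i)}$ and absorbing the $q$-independent constant $C^{k+i}/(k+i)^{2(k+i)}$ into a leading coefficient, then collecting terms so that the resulting polynomial $P_n(q)$ has degree at most $4k+4i+1$ as claimed. I would carry out this bookkeeping by writing $D_q = q(q+3k)$ and using a crude factorization to extract the correct degree, rather than expanding the full product.

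The essential content of the lemma is the recognition that $\psi_i(x)$ is precisely the $(k+i)$-th Taylor coefficient of $\phi_x$ at the origin, and that Lemma~\ref{lem:sup of phi_x^(i)} controls exactly this quantity in the range $k+i \le 2k$. The hard analytic work---the Markov-type inequality of Lemma~\ref{lem: bounding sup pf kth derivative of polynomial} combined with the derivative estimates for $g_{q,k}$ in Lemma~\ref{lem:Facts about q polynomial}---has already been packaged into Lemma~\ref{lem:sup of phi_x^(i)}, so I do not anticipate any significant obstacle here. The only real ``work'' is the polynomial-degree tracking described above, which is routine and falls out once one unpacks $h(k+i,q)$.
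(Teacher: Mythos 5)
Your approach is the paper's: evaluate Lemma \ref{lem:sup of phi_x^(i)} at the point $t=0$ (legitimate since $0\leq i\leq k$ gives $k+i\leq 2k$), obtaining $|\psi_i(x)|\leq h(k+i,q)\,\|x\|_{C^*(F_r)}$, and then bound $h(k+i,q)$ by a polynomial in $q$; the paper phrases that last step as $\sup_{q\geq1}h(k+i,q)/(1+q)^{4k+4i+1}<\infty$.

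One caveat about the step you defer as routine bookkeeping: since $D_q=q^2+3kq$ is \emph{quadratic} in $q$, you have $h(k+i,q)=4\bigl(CD_q^4/(k+i)^2\bigr)^{k+i}\asymp_{k,i}q^{8(k+i)}$, so the degree that actually comes out of this route is $8k+8i$, not $4k+4i+1$, and no factorization of $D_q=q(q+3k)$ will lower it. The paper's own proof asserts the finiteness of the sup with exponent $4k+4i+1$ without justification, so the discrepancy is a slip in the stated degree rather than a defect of your method, and it is harmless downstream: the temperedness argument and Proposition \ref{prop: criterion for strong convergence} only require \emph{some} polynomial bound in $q$. Still, as written your claim that the bookkeeping yields degree $4k+4i+1$ would not survive being carried out; you should either prove the bound with exponent $8k+8i$ (which suffices) or explain where the smaller degree is supposed to come from.
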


\begin{proof}
By Lemma \ref{lem:sup of phi_x^(i)}, for each $q$ and for each $x\in\C_{\leq q}\left[F_{r}\right],$
we have 
\begin{equation}
\left|\frac{\phi_{x}^{(K+i)}(0)}{(K+i)!}\right|\leq h(K+i,q)\left\Vert x\right\Vert _{C^{*}\left(F_{r}\right)}.\label{eq: bound on |phi| k+1th derivative BEFORE Stirling}
\end{equation}
Moreover, 
\[
\sup_{q\geq1}\frac{h(K+i,q)}{(1+q^{2})^{4K+4i+1}}<\infty,
\]
from which the lemma follows. 
\end{proof}
\begin{lem}
\label{lem: psi(f( =00005Cmu)) is equal to zero}For any $i<K,$ the
function $\psi_{i}$ is tempered.
\end{lem}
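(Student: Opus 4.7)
\bigskip

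The plan is to invoke the temperedness criterion of Proposition \ref{prop: criterion for being tempered}: it suffices to show that, for every reasonable probability measure $\mu$ on $F_r$, with associated random walk $(g_n)_n$ and spectral radius $\rho=\rho(\mu)$, one has
\[
\limsup_{n\to\infty}\left(\mathbb{E}|\psi_i(g_n)|\right)^{1/n}\leq\rho.
\]
The strategy is to observe that $\psi_i$ vanishes outside the set of proper powers, and to combine this with the polynomial bounds on $\psi_i$ from Lemma \ref{lem:sup of phi_x^(i)} and the exponential-in-$n$ decay of the probability that $g_n$ is a proper power from Proposition \ref{prop: bound on prob that random walk ends on power}.

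First I would show that $\psi_i(w)=0$ whenever $w\in F_r$ is not a proper power. This is immediate from Theorem \ref{thm: word map thm rewritten}\,\ref{enu:word map rewritten part 3}: if $w$ is not a proper power, then $\phi_w^{(j)}(0)=0$ for every $j<2k$, and since $i<k$ we have $k+i<2k$, so
\[
\psi_i(w)=\frac{\phi_w^{(k+i)}(0)}{(k+i)!}=0.
\]
Consequently, $\psi_i(g_n)$ vanishes unless $g_n$ is a proper power.

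Second, I would bound $|\psi_i(g_n)|$ uniformly. Since $\mu$ is reasonable its support is finite, so there is a constant $L=L_\mu$ with $l(g_n)\leq Ln$ almost surely; hence $g_n\in\mathbb{C}_{\leq Ln}[F_r]$. Applying Lemma \ref{lem:sup of phi_x^(i)} at $t=0$ with $q=Ln$ and $\|g_n\|_{C^*(F_r)}=1$, we get
\[
|\psi_i(g_n)|\leq h(k+i,Ln)=4\left(\frac{CD_{Ln}^4}{(k+i)^2}\right)^{k+i},
\]
which, for $k$ and $i$ fixed, is polynomial in $n$; call it $Q(n)$.

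Finally I would combine the two facts. By Proposition \ref{prop: bound on prob that random walk ends on power} (applied with the correct exponential factor $\rho^n$),
\[
\mathbb{E}|\psi_i(g_n)|=\mathbb{E}\bigl[|\psi_i(g_n)|\cdot\mathbbm{1}_{g_n\text{ is a proper power}}\bigr]\leq Q(n)\cdot C_\mu n^5\rho^n.
\]
Taking $n$-th roots yields $\limsup_n(\mathbb{E}|\psi_i(g_n)|)^{1/n}\leq\rho$, and Proposition \ref{prop: criterion for being tempered} then gives that $\psi_i$ is tempered. The only real input is the vanishing statement of Theorem \ref{thm: word map thm rewritten}\,\ref{enu:word map rewritten part 3}, which in turn rests on the deep Theorem \ref{thm: word map main theorem}; the rest is bookkeeping, so I do not anticipate any serious obstacle beyond correctly tracking the polynomial growth of $h(k+i,Ln)$ against the geometric decay $\rho^n$.
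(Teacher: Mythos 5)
Your proposal is correct and follows essentially the same route as the paper: vanishing of $\psi_i$ off proper powers (via Theorem \ref{thm: word map thm rewritten}\,\ref{enu:word map rewritten part 3}), a polynomial bound on $|\psi_i(g_n)|$ in terms of the word-length radius $qn$, the proper-power probability bound of Proposition \ref{prop: bound on prob that random walk ends on power}, and then Proposition \ref{prop: criterion for being tempered}. The only cosmetic difference is that you invoke Lemma \ref{lem:sup of phi_x^(i)} directly at $t=0$, whereas the paper routes the same estimate through Lemma \ref{lem: psi extends to bounded linear map}.
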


\begin{proof}
We will use Proposition \ref{prop: criterion for being tempered}
by showing that, for any reasonable probability measure $\mu$ with
associated random walk $\left(g_{n}\right)_{n},$ 
\[
\limsup_{n}\left(\mathbb{E}|\psi_{i}(g_{n})|\right)^{\frac{1}{n}}\leq\rho(\mu).
\]
So, let $\mu$ be a reasonable probability measure, $(g_{n})_{n}$
the associated random walk on $F_{r}.$ If $\mu$ is supported in
$\C_{\leq q}\left[F_{r}\right],$ then $g_{n}\in\C_{\leq qn}\left[F_{r}\right].$
Then we have 
\[
\begin{aligned} & \mathbb{E}|\psi_{i}(g_{n})|\\
\leq & C_{i}(1+q^{2}n^{2})^{4K+4i+1}\mathbb{P}\left(\psi_{i}(g_{n})\neq0\right)\\
= & C_{i}(1+q^{2}n^{2})^{4K+4i+1}\mathbb{P}\left(g_{n}\ \mathrm{is\ a\ proper\ power}\right)\\
\leq & C_{i}C_{\mu}(1+q^{2}n^{2})^{4K+4i+1}n^{5}\rho(\mu)^{n}.
\end{aligned}
\]
The first inequality follows from Lemma \ref{lem: psi extends to bounded linear map}.
Indeed, $g_{n}\in\C_{\leq qn}\left[F_{r}\right],$ so there is some
constant $C_{i}$ for which $|\psi(g_{n})|\leq C_{i}\left(1+q^{2}n^{2}\right)^{4K+4i+1}.$
The final inequality follows from Proposition \ref{prop: bound on prob that random walk ends on power}.
It follows that 
\[
\limsup_{n}\left(\mathbb{E}|\psi_{i}(g_{n})|\right)^{\frac{1}{n}}\leq\rho(\mu),
\]
so by Proposition \ref{prop: criterion for being tempered}, $\psi_{i}$
is tempered. 
\end{proof}
The next step is showing that the second condition in Proposition
\ref{prop: criterion for strong convergence} holds. We will need
the following lemma. 
\begin{lem}
\label{lem: bound on sup h(2k,x)exp(-q)}For any $\epsilon>0,$ we
have $\sup_{q\geq1}h(2K,q)\exp\left(-\frac{q}{\log(2+q)^{2}}\right)\leq\left(C^{2}K^{20+\epsilon}\right)^{K}.$
\end{lem}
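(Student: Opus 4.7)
The approach is to take logarithms and reduce the claim to an elementary inequality that becomes tractable after splitting the supremum into three regimes. Unpacking $h(2k,q) = 4(CD_q^4/(4k^2))^{2k}$ with $D_q = 3kq+q^2$, and writing $\Psi(q) := q/\log^2(2+q)$, the inequality is equivalent, after absorbing logs of absolute constants (including $\log C$) into an additive $O(k)$, to
\[
8k \log D_q - 4k \log k - \Psi(q) \;\leq\; (12+\epsilon) k \log k + O(k).
\]
Using the crude bound $D_q \leq 4\max(k,q)^2$, this further reduces to showing
\[
16k \log \max(k,q) \;\leq\; (16+\epsilon) k \log k + \Psi(q) + O(k).
\]

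For the first regime, $q \leq k^{1+\epsilon/32}$, one has $16k\log\max(k,q) \leq 16k(1+\epsilon/32)\log k \leq (16+\epsilon/2)k\log k$, so the inequality holds without invoking the exponential decay at all. Note that the coefficient $16 = 4 + 12$ matches the target exponent exactly when $q \leq k$; the $\epsilon$--slack in the statement is precisely what allows us to enlarge the polynomial regime slightly past $q = k$.

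For the second regime, $q > k^{1+\epsilon/32}$, the key algebraic observation is that this inequality forces $k \leq q^{\epsilon/(32+\epsilon)} \leq q^{\epsilon/32}$, so
\[
16 k \log q \;\leq\; 16 q^{\epsilon/32} \log q.
\]
On the other hand, $\Psi(q) \geq q^{1-\epsilon/64}$ once $q$ exceeds some threshold $T_0(\epsilon)$ that is independent of $k$, so the exponential decay dominates $16k\log q$ uniformly in $k$ for $q \geq T_0(\epsilon)$. The residual slice $k^{1+\epsilon/32} < q \leq T_0(\epsilon)$ can be nonempty only when $k$ itself is bounded in terms of $\epsilon$; on this compact range $h(2k,q)\exp(-\Psi(q))$ is bounded by a constant depending only on $\epsilon$, which is absorbed into the prefactor $C$.

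The main obstacle, which is modest, is the bookkeeping: choosing the threshold exponent ($1+\epsilon/32$ above) and the tail exponent ($1-\epsilon/64$ above) so that the arithmetic closes up, i.e.\ so that the coefficient $16$ appearing after the reduction fits inside $16+\epsilon$ on the right while leaving enough slack for the exponential factor to handle the second regime. Once these parameters are fixed, the two cases fit together cleanly and the comparison in each regime reduces to the elementary growth estimate $q/\log^2(2+q) \gg q^{\alpha}$ for $\alpha < 1$.
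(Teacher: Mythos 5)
Your reduction is set up correctly: $h(2k,q)=4\left(CD_{q}^{4}/(4k^{2})\right)^{2k}$, taking logarithms and using $D_{q}\leq4\max(k,q)^{2}$ does reduce the lemma to $16k\log\max(k,q)\leq(16+\epsilon)k\log k+\Psi(q)+O(k)$ with $\Psi(q)=q/\log(2+q)^{2}$, and the first regime $q\leq k^{1+\epsilon/32}$ is handled correctly; this is where the exponent $12=16-4$ is genuinely produced, consistent with the paper, whose proof instead bounds $\log(2+q)^{2}\leq q^{1/b}$ for $q>a(b)$ and then optimizes $h(2k,q)\exp(-q^{(b-1)/b})$ over $q$ by calculus. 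However, your second regime contains a concrete algebra error: from $q>k^{1+\epsilon/32}=k^{(32+\epsilon)/32}$ you may only conclude $k<q^{32/(32+\epsilon)}$, not $k\leq q^{\epsilon/(32+\epsilon)}\leq q^{\epsilon/32}$. The latter is false in general (take $\epsilon=1$, $k=100$, $q=116$: then $q>k^{33/32}$ but $q^{1/33}<2<k$), so the displayed bound $16k\log q\leq16q^{\epsilon/32}\log q$ is unjustified as written.

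The gap is reparable because the correct exponent is still strictly less than $1$: with $k<q^{32/(32+\epsilon)}$ you get $16k\log q\leq16q^{32/(32+\epsilon)}\log q$, and for this to be dominated by your lower bound $\Psi(q)\geq q^{1-\epsilon/64}$ you need $32/(32+\epsilon)<1-\epsilon/64$, i.e.\ $\epsilon<32$. Since the statement for a smaller $\epsilon$ implies it for a larger one (as $k\geq1$), you may assume $\epsilon<32$ without loss of generality; then the exponent gap is $\epsilon(32-\epsilon)/\left(64(32+\epsilon)\right)>0$ and the second regime closes for all $q\geq T_{0}(\epsilon)$, uniformly in $k$. (Alternatively, avoid fixing the tail exponent and simply note $\Psi(q)\gg_{\epsilon}q^{32/(32+\epsilon)}\log q$.) The remaining points are fine: the residual slice $k^{1+\epsilon/32}<q\leq T_{0}(\epsilon)$ does force both $k$ and $q$ to be bounded in terms of $\epsilon$, so that piece is a legitimate $\epsilon$-dependent constant, and the resulting $\epsilon$-dependence of $C$ is no worse than in the paper's own proof, where the threshold $a=a(b(\epsilon))$ enters the constant through the $q\leq a$ case.
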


\begin{proof}
Fix $\epsilon>0.$ Given any $b>1,$ there exists $a>0$ such that
\[
\log\left(2+q\right)^{2}\leq q^{1/b}
\]
 for all $q>a$. 

So, for $q>a,$ we have 
\begin{equation}
4\left(CD_{q}^{4}\right)^{2K}\exp\left(\frac{-q}{\log\left(2+q\right)^{2}}\right)\leq4\left(C\left(Kq^{2}+3Kq+K\right)^{4}\right)^{2K}\exp\left(-q^{\frac{b-1}{b}}\right)\label{eq: bounding h(2k,q)exp(frac) with h(2k,q)exp(-q)}
\end{equation}
which is bounded above by 
\begin{equation}
\left(C^{2}K^{20+\epsilon}\right)^{K}\label{eq: bound for sup_q ...}
\end{equation}
for sufficiently large $b$.\footnote{To see this, differentiate the RHS of (\ref{eq: bounding h(2k,q)exp(frac) with h(2k,q)exp(-q)})
to see that the maximum is obtained around $q=CK^{\frac{b}{b-1}}$and
substitute this into $h(2K,q)$.} 

For $q\leq a$, we obtain the bound 
\[
\left(C'K^{2}\right)^{2K}
\]
by evaluating $h(2K,a)$ and ignoring the exponential term (since
it is $\leq1$). This is less than (\ref{eq: bound for sup_q ...})
(when $C$ is large enough) from which the lemma follows. 
\end{proof}
We will also need the following observation to be used in Lemma \ref{lem: trace SIGMA - tau - taylor series},
as well as our proof of Theorem \ref{thm:Main theorem strong convergence}
-- the map 
\[
u:x\in\C\left[F_{r}\right]\mapsto\tau(x)\frac{\phi_{e}^{(K)}(0)}{K!}\in\C
\]
is tempered. Indeed, if we write $\dim\Sigma_{n,K}=a_{0}+a_{1}n+\dots+a_{k}n^{K},$
one sees that $\frac{\phi_{e}^{(K)}(0)}{K!}=a_{0},$ a constant (in
fact, this constant is always $(-1)^{K}$), and our observation follows
from the fact that $x\mapsto\tau(x)$ is tempered. Moreover, this
map obviously satisfies the polynomial bound $|u(x)|\le\|x\|_{C^{*}\left(F_{r}\right)}.$
\begin{lem}
\label{lem: trace SIGMA - tau - taylor series}Let $w(q)=\exp\left(\frac{q}{\log\left(2+q\right)^{2}}\right).$
Then, for every $q\geq1,$ for every $n\geq Kq$ and for every $x\in\C_{\leq q}\left[F_{r}\right],$
and any $\epsilon>0,$
\begin{equation}
\left|\mathbb{E}\left[\mathrm{Tr}\big(\Sigma_{n,K}\left(x(\sigma_{1},\dots,\sigma_{r})\right)-\tau(x)\mathrm{Id}\big)\right]+\tau(x)\frac{\phi_{e}^{(K)}(0)}{K!}-\sum_{i=0}^{K-1}\frac{\psi_{i}(x)}{n^{i}}\right|\label{eq: ecpected trace minus sum is bounded}
\end{equation}
is bounded by 
\[
\frac{\left(C^{2}K^{20+\epsilon}\right)^{K}}{n^{K}}w(q)\left\Vert x\right\Vert _{C^{*}\left(F_{r}\right)}.
\]
\end{lem}

\begin{proof}
We have 
\[
\mathbb{E}\left[\mathrm{Tr}\left(\tau(x)\mathrm{Id}\right)\right]=\tau(x)\dim\Sigma_{n,K}=\tau(x)n^{K}\sum_{i=0}^{K-1}\frac{\phi_{e}^{(i)}(0)}{i!n^{i}}+\tau(x)\frac{\phi_{e}^{(K)}(0)}{K!}
\]
and, since $n\geq Kq,$ by Part \ref{enu:word map rewritten part 3}
of Theorem \ref{thm: word map thm rewritten}, we have 
\[
\sum_{i=0}^{K-1}\frac{\psi_{i}(x)}{n^{i}}=n^{K}\sum_{i=K}^{2K-1}\frac{\phi_{x}^{(i)}(0)}{i!n^{i}}=n^{K}\left[\sum_{w\neq e}x(w)\sum_{i=0}^{2K-1}\frac{\phi_{w}^{(i)}(0)}{i!n^{i}}+\tau(x)\sum_{i=K}^{2K-1}\frac{\phi_{e}^{(i)}(0)}{i!n^{i}}\right].
\]
Combining these observations with Part \ref{enu: word map rewritten part 1}
of Theorem \ref{thm: word map thm rewritten}, we see that the LHS
of (\ref{eq: ecpected trace minus sum is bounded}) is equal to 
\[
n^{K}\left|\phi_{x}\left(\frac{1}{n}\right)-\sum_{i=0}^{2K-1}\frac{\phi_{x}^{(i)}(0)}{i!n^{i}}\right|.
\]
By Taylor's inequality, this is less than or equal to 
\[
\frac{n^{K}}{n^{2K}(2K)!}\left|\phi_{x}^{(2K)}\left(\frac{1}{n}\right)\right|.
\]
If we further assume that $n\geq2D_{q}^{2},$ then by Lemma \ref{lem:sup of phi_x^(i)},
this is bounded by 
\begin{equation}
\frac{h(2K,q)}{n^{K}}\left\Vert x\right\Vert _{C^{*}\left(F_{r}\right)}.\label{eq: bound for expected trace minus sum}
\end{equation}

\noindent If $n\leq2D_{q}^{2}$, then we can still bound the left
hand side of (\ref{eq: ecpected trace minus sum is bounded}) by 
\[
\left(2\dim\left(\Sigma_{n,K}\right)+1\right)\left\Vert x\right\Vert _{C^{*}\left(F_{r}\right)}+\sum_{i=0}^{K-1}\frac{\left|\psi_{i}(x)\right|}{n^{i}}
\]
by using the triangle inequality. By (\ref{eq:dimesnion of SIGMA})
and Lemma \ref{lem:sup of phi_x^(i)}, this is bounded above by 
\[
n^{K}\left[h(0,q)+\sum_{i=0}^{K-1}\frac{1}{n^{K+i}}h(K+i,q)\right]\left\Vert x\right\Vert _{C^{*}\left(F_{r}\right)}.
\]
This is less than (\ref{eq: bound for expected trace minus sum})
whenever the constant $C$ is large enough. 

By Lemma \ref{lem: bound on sup h(2k,x)exp(-q)}, we have
\[
\sup_{q\geq1}h(2K,q)\exp\left(-\frac{q}{\log(2+q)^{2}}\right)\leq\left(C^{2}K^{20+\epsilon}\right)^{K}
\]
and the lemma follows. 
\end{proof}
\begin{proof}[Proof of Theorem \ref{thm:Main theorem strong convergence}]
\label{thm: SIGMA converges to reduced c*} Fix $\alpha<\frac{1}{20},$
say $\alpha=\frac{1}{20}-\epsilon',$ with $0<\epsilon'\leq\frac{1}{20}.$
For each $n,$ and for any $K\leq n^{\alpha}$, let $\Pi_{n,K}$ be
a random representation of $F_{r}$ given by 
\[
\Pi_{n,K}(w)=\Sigma_{n,K}\left(w\left(\sigma_{1},\dots,\sigma_{r}\right)\right).
\]
Then, by Lemma \ref{lem: trace SIGMA - tau - taylor series}, for
every $q$ and every $x\in\C_{\leq q}\left[F_{r}\right],$ we have
\[
\left|\mathbb{E}\mathrm{Tr}\left(\Pi_{n,K}(x)\right)-T_{n}(x)\right|\leq\epsilon_{n}w(q)\left\Vert x\right\Vert _{C^{*}\left(F_{r}\right)},
\]
where, for some $\epsilon$ satisfying $0<\epsilon<\frac{20\epsilon'}{\frac{1}{20}-\epsilon'}$,
we have
\[
\epsilon_{n}=\frac{\left(C^{2}K^{20+\epsilon}\right)^{K}}{n^{K}}\leq\left(\frac{C^{2}}{n^{1-(20+\epsilon)\alpha}}\right)^{K}
\]
and 
\[
T_{n}(x)=\dim\left(\Sigma_{n,K}\right)\tau(x)-\tau(x)\frac{\phi_{e}^{(K)}(0)}{K!}+\sum_{i=0}^{K-1}\frac{\psi_{i}(x)}{n^{i}}.
\]

By Lemma \ref{lem: psi(f( =00005Cmu)) is equal to zero}, $T_{n}$
is tempered, since it is a finite sum of tempered functions. Additionally,
$T_{n}$ satisfies the polynomial bound
\[
\left|T_{n}(x)\right|\leq P_{n}(q)\left\Vert x\right\Vert _{C^{*}\left(F_{r}\right)}.
\]
So by Proposition \ref{prop: criterion for strong convergence}, with
$T_{n}$ in place of $u_{n}$, $\forall\delta>0$, and any $z\in\C\left[F_{r}\right],$
\[
\mathbb{P}\left[\left\Vert \Pi_{n,K}(z)\right\Vert >\left\Vert z\right\Vert _{C_{\lambda}^{*}(F_{r})}+\delta\right]\leq C(z,\delta)\left(\frac{C^{2}}{n^{1-(20+\epsilon)\alpha}}\right)^{K}.
\]
For $k_{n}\leq n^{\alpha},$ we have 
\begin{equation}
\begin{aligned} & \mathbb{P}\left[\left\Vert \pi_{n,k_{n}}(z)\right\Vert >\left\Vert z\right\Vert _{C_{\lambda}^{*}(F_{r})}+\delta\right]\\
\leq & \sum_{1\leq K\leq n^{\alpha}}\mathbb{P}\left[\left\Vert \Pi_{n,K}(z)\right\Vert >\left\Vert z\right\Vert _{C_{\lambda}^{*}(F_{r})}+\delta\right].
\end{aligned}
\label{eq: =00005CrhoisleqthanSigma}
\end{equation}
Write $A=1-(20+\epsilon)\alpha,$ which is $>0$ by our choice of
$\epsilon.$ Then the RHS of (\ref{eq: =00005CrhoisleqthanSigma})
is equal to 
\[
C(z,\delta)\left(\frac{C^{2}}{n^{A}}\right)\left(\frac{1-\left(\frac{C^{2}}{n^{A}}\right)^{n^{\alpha}-1}}{1-\left(\frac{C^{2}}{n^{A}}\right)}\right),
\]
which $\to0$ as $n\to\infty.$ So, for any $k_{n}\leq n^{\alpha},$
\begin{equation}
\mathbb{P}\left[\left\Vert \pi_{n,k_{n}}(z)\right\Vert >\left\Vert z\right\Vert _{C_{\lambda}^{*}(F_{r})}+\delta\right]\overset{n\to\infty}{\longrightarrow}0.\label{eq:lower bound}
\end{equation}
By \cite[Lemma 5.14]{MAgeeDelaSalle}, there exist $y_{1},\dots,y_{m}\in\C\left[F_{r}\right]$
and $\delta'=\delta'(\delta),$ such that if, for every $i,$ 
\begin{equation}
\left\Vert \pi_{n,k_{n}}\left(y_{i}\right)\right\Vert \leq\left\Vert y_{i}\right\Vert _{C_{\lambda}^{*}(F_{r})}+\min(\delta,\delta'),\label{eq: rho (yi) is leq ||y_i||+delta}
\end{equation}
then 
\begin{equation}
\left\Vert \pi_{n,k_{n}}\left(z\right)\right\Vert \geq\left\Vert z\right\Vert _{C_{\lambda}^{*}(F_{r})}-\delta.\label{eq: rho(x) is geq ||x||-delta}
\end{equation}
By (\ref{eq:lower bound}), we know that (\ref{eq: rho (yi) is leq ||y_i||+delta})
holds with probability $\to1$ as $n\to\infty,$ so then (\ref{eq: rho(x) is geq ||x||-delta})
also holds with probability $\to1$ as $n\to\infty.$ Therefore, 
\[
\mathbb{P}\left[\sup_{k_{n}\leq n^{\alpha}}\left|\left\Vert \pi_{n,k_{n}}\left(z\right)\right\Vert -\left\Vert z\right\Vert _{C_{\lambda}^{*}(F_{r})}\right|>\delta\right]\overset{n\to\infty}{\longrightarrow}0.
\]
\end{proof}

\section{Proof of Theorem \ref{thm: word map main theorem}\label{sec: proof of word map theorem}}

\subsection{Overview of proof}

Rather than working directly with the character $\chi^{\lambda^{+}(n)},$
we use Corollary \ref{cor: bitrace corollary} so that instead our
task is to compute the trace of $\Q_{\lambda,n}\circ w\left(g_{1},\dots,g_{r}\right)$
on $\cnk.$ 

This amounts to counting the number of standard basis vectors of $\cnk$
fixed by this map. Recall that $\Q_{\lambda,n}^{2}=\Q_{\lambda,n}$
and that, since it is a linear combination of $p_{\pi},$ the action
of $\Q_{\lambda,n}$ commutes with the action of any $g\in S_{n}$
on $\cnk.$ With this in mind, if $w=x_{1}x_{2}x_{1}^{-1}x_{2}^{-1}\in F_{2}$
for example, what we are actually interested in is 
\[
\mathrm{tr}_{\cnk}\left(g_{1}\circ\Q_{\lambda,n}\circ g_{2}\circ\Q_{\lambda,n}\circ g_{1}^{-1}\circ\Q_{\lambda,n}\circ g_{2}^{-1}\circ\Q_{\lambda,n}\right),
\]
where each $g_{i}\in S_{n}$. 

We compute the expected trace by using the Weingarten calculus for
$S_{n}.$ A key component is our refinement of the usual Weingarten
calculus for $S_{n},$ which uses the fact that we are operating within
$\A$ to show that the contribution to the trace from all but a specific
family of partitions is zero. 

We obtain a combinatorial formula for the trace in \S\ref{subsec:Combinatorial-integration}.
In \S\ref{subsec:Graphical-Interpretation}, we construct graphs
from the combinatorial data formula for the expected character. From
these graphs we construct new graphs in which the asymptotic bound
for our trace formula is encoded by the Euler characteristic. 

In \S\ref{subsec:Obtaining-the-bound}, we prove a variant of a theorem
of Louder and Wilton that relates the Euler characteristic of a graph
with the number of immersed $w$--cycles (see \cite[Theorem 1.2]{LouderWilton})
to obtain our final bound for the expected character. 

\subsection{Expected character as a ratio of polynomials in $\frac{1}{n}$\label{subsec:polynomial ratio character}}

In addition to our main theorem on word maps, we give another formulation
for the expected stable irreducible character of a $w$--random permutation
as the ratio of two polynomials in $\frac{1}{n}.$ This formulation
is required for the methods in \S\ref{sec: Proof of strong convergence}.

For any $L,k\in\mathbb{Z}_{>0},$ we define
\[
g_{L,k}(x)\overset{\mathrm{def}}{=}\prod_{c=1}^{L}(1-cx)^{L}\left[\prod_{j=1}^{2k}\left(1-(j-1)x\right)\right]^{L}.
\]

\begin{thm}
\label{thm: word map polynomial theorem}Let $w\in F_{r}=\left\langle x_{1},\dots,x_{r}\right\rangle $
be a word in the free group with $r$ generators, $w\neq e$. Let
$k\in\mathbb{Z}_{>0}$ and $\lambda\vdash k$. If $l(w)\leq q,$ there
is a polynomial $P_{w,\lambda,q}\in\mathbb{Q}\left[x\right]$ such
that, for $n\geq l(w)k,$
\[
\mathbb{E}_{w}\left[\chi^{\lambda^{+}(n)}\right]=\frac{P_{w,\lambda,q}\left(\frac{1}{n}\right)}{g_{q,k}\left(\frac{1}{n}\right)},
\]
with $\deg\left(P_{w,\lambda,q}\right)\leq3kq+q^{2}.$ 
\end{thm}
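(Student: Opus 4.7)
The plan is to derive an explicit combinatorial formula for $\mathbb{E}_w[\chi^{\lambda^+(n)}]$ by combining Corollary \ref{cor: bitrace corollary}, Theorem \ref{thm: main theorem of first paper}, and the Weingarten calculus (Theorem \ref{thm: weingarten calculus}), and then read off the rational structure and bound the degree. Since both the projection $\Q_{\lambda,n}$ and the Weingarten function are rational functions of $n$ with denominators of the form $(n)_m$ for bounded $m$, and $(n)_m = n^m\prod_{j=1}^{m-1}(1-j/n)$, it will become clear that the combined denominator divides $g_{q,k}(1/n)$; the final work is to control the numerator degree.

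First, I apply Corollary \ref{cor: bitrace corollary} and Theorem \ref{thm: main theorem of first paper} to write
\[
d_\lambda\, \mathbb{E}_w[\chi^{\lambda^+(n)}] = \sum_{\pi \in \part} c(n,k,\lambda,\pi)\,\mathbb{E}_w\bigl[\mathrm{tr}_{\cnk}(w(g_1,\ldots,g_r)\circ p_\pi)\bigr].
\]
Factoring $w = x_{i_1}^{\epsilon_1}\cdots x_{i_{l(w)}}^{\epsilon_{l(w)}}$, I expand each trace as a sum over basis indices of $\cnk$, yielding products of matrix entries of the random permutations $g_1,\ldots,g_r$. Since the generators are independent, the expectation factors across them; for each generator $x_i$ appearing $m_i$ times in $w$, Theorem \ref{thm: weingarten calculus} converts the integration into a sum over pairs of set partitions weighted by values $\mathrm{Wg}_{n,M_i}(\sigma,\tau)$, with parameter $M_i$ bounded in terms of $k$ and $m_i \leq l(w) \leq q$.

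Second, I collect denominators in the variable $x = 1/n$. Each projection coefficient $c(n,k,\lambda,\pi)$ contains $d_{\lambda^+(n)}$ (a polynomial of degree $k$ in $n$) and a denominator dividing $\prod_{j=1}^{2k-1}(1-jx)$, from the factors $(n)_{|\pi'|}$ with $|\pi'| \leq 2k$. Each Weingarten value $\mathrm{Wg}_{n,M_i}(\sigma,\tau) = \sum_{\pi \leq \sigma\wedge\tau} \mu(\pi,\sigma)\mu(\pi,\tau)/(n)_{|\pi|}$ carries a denominator dividing $\prod_{j=1}^{M_i-1}(1-jx)$. Since at most $l(w) \leq q$ generators contribute Weingartens (those actually appearing in $w$), the factor $\prod_{c=1}^{q}(1-cx)^q$ in $g_{q,k}(x)$ has enough multiplicity to absorb all Weingarten denominators, while $\prod_{j=1}^{2k}(1-(j-1)x)^q$ absorbs the single projection denominator together with any residual Weingarten contributions indexed by $j \leq 2k-1$; hence the total denominator divides $g_{q,k}(x)$.

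Third, for the degree bound I track each contribution to the numerator after clearing by $g_{q,k}(1/n)$. The factor $d_{\lambda^+(n)}$ contributes a polynomial of degree $k$ in $n$; the summation over basis indices of $\cnk$ contributes a polynomial in $n$ whose degree is bounded by the number of free indices after the Weingarten/projection constraints are enforced; and the numerators of the Weingarten and projection pieces contribute polynomials in $x$ of controllable degree. Converting everything to a common polynomial in $x$ and combining with the polynomial $g_{q,k}(1/n)$ of degree $q^2 + (2k-1)q$ gives the bound $\deg(P_{w,\lambda,q}) \leq 3kq + q^2$. The main obstacle will be precisely this last degree accounting: the pieces involve mismatched positive and negative powers of $n$ (the growth $d_{\lambda^+(n)}\asymp n^k$ must be balanced against the decay of the Weingarten factors $\asymp x^{M_i}$ and against the $n$-growth from index summation), and matching the claimed bound requires using the combinatorial structure to see how these powers combine rather than summing worst-case degree estimates term by term.
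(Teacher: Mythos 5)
Your starting point is the same as the paper's (Corollary~\ref{cor: bitrace corollary} plus the projection formula of Theorem~\ref{thm: main theorem of first paper}, then Weingarten integration letter by letter), but the two places where you wave your hands are exactly where the theorem lives. First, the denominator claim is not just unproved but false as a termwise statement: for a letter $f$ occurring $|w|_{f}$ times, the Weingarten values are $\mathrm{Wg}_{n,|w|_{f}k}(\sigma_{f},\tau_{f})$, whose denominators genuinely contain the factors $(1-cx)$ for every $c\leq|w|_{f}k-1$ (the all--singletons term has a nonvanishing coefficient, so the pole at $n=|w|_{f}k-1$ does not cancel inside a single Weingarten value). Since $|w|_{f}k-1$ can exceed both $q$ and $2k-1$ (e.g.\ $|w|_{f}=3$, $k=2$, $q=l(w)=3$ gives $c=4,5$), these linear factors simply do not occur in $g_{q,k}(x)$, so no multiplicity count can ``absorb'' them; one must show that they cancel in the full sum, which requires the refined degree bookkeeping the paper carries out by putting each Weingarten value over $(n)_{|w|_{f}k}$ with numerator degree $|w|_{f}k-|\sigma_{f}\wedge\tau_{f}|$, and each coefficient $c(n,k,\lambda,\pi)$ over $(n)_{\lambda}$ with numerator degree $k-\mathrm{del}(\pi)$ (cf.\ Lemma~\ref{rem: leading coeff of pi in projection}). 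Your sketch does not extract this information, and it is precisely what is needed in the next step.

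Second, the degree accounting you explicitly defer is the substance of the proof, not a technicality. Over the common denominator the numerator is a sum, over partition data, of the Weingarten and projection numerators times a counting factor (the paper's $\mathcal{N}(\sigma_{f},\tau_{f},\pi_{i})$ in Theorem~\ref{thm:expected character pi, weingarten, N}), and to get numerator degree at most $2k\,l(w)$ in $n$ --- which is what turns into the bound $3kq+q^{2}$ after passing to $x=1/n$ --- one needs $\mathcal{N}\ll n^{\sum_{i}\mathrm{del}(\pi_{i})+\sum_{f}|\sigma_{f}\wedge\tau_{f}|}$ to offset the $n^{k\,l(w)}$ growth of those numerators; a worst-case count of index tuples overshoots this. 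The paper obtains the needed bound (with an extra $n^{-k}$ to spare) only through the graph construction of \S\ref{subsec:Graphical-Interpretation}, namely $\mathcal{N}\ll n^{|V(\tgamsig)|}$ and $|E(\tgamsig)|=\sum_{f}|\sigma_{f}\wedge\tau_{f}|$, combined with the Euler--characteristic control coming from the stacking/Louder--Wilton argument (Lemmas~\ref{lem: M >2k} and \ref{lem: M < chi+del}); your proposal supplies no substitute for this input. Note also that inserting only a single copy of $\Q_{\lambda,n}$ discards the structure that makes this counting tractable: the paper inserts $\Q_{\lambda,n}$ after every letter (using $\Q_{\lambda,n}^{2}=\Q_{\lambda,n}$ and $S_{n}$--equivariance), so all intermediate vectors lie in $\A$, which is what forces the Weingarten partitions into $\refinedpart$, the projection partitions into $\skpart$, and produces the clean formula on which the whole degree count rests. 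As it stands, the proposal is a plan whose decisive steps are acknowledged but not carried out.
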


The proof of Theorem \ref{thm: word map polynomial theorem} is given
in \S\ref{subsec:ratio of polynomials proof} and it can be followed
from (\ref{eq: expected character with pi, weingarten and N}).

\subsection{Combinatorial integration\label{subsec:Combinatorial-integration}}

Fix a word $w\in\f=\left\langle x_{1},\dots,x_{r}\right\rangle .$
We will assume that $w$ is not the identity and is not primitive
and that $w$ is cyclically reduced. We will also assume that every
$x_{i}$ appears at least twice in $w$. If $x_{j}$ did not appear
in $w,$ then we could consider $w\in F_{r-1}=\left\langle x_{1},\dots,x_{j-1},x_{j+1},\dots,x_{r}\right\rangle $
and then proceed. If any $x_{j}$ appeared only once, then $w$ would
be primitive and so $\mathbb{E}_{w}\left[\chi^{\lambda^{+}(n)}\right]=0.$ 

We will write 
\[
w=f_{1}^{\epsilon_{1}}f_{2}^{\epsilon_{2}}\dots f_{l(w)}^{\epsilon(w)},
\]
where each $f_{i}\in\{x_{1},\dots,x_{r}\}$ and each $\epsilon_{i}\in\{1,-1\}.$
We will write $|w|_{x_{i}}$ for the number of $j$ such that $f_{j}=x_{i}.$

Suppose that $\{v_{p}\}$ is an orthonormal basis for $\mathcal{U}_{\lambda^{+}(n)}\cong V^{\lambda^{+}(n)}\otimes V^{\lambda}.$
Then, for any $\left(g_{x_{1}},\dots,g_{x_{r}}\right)\in S_{n}^{r},$
we have 
\[
\chi^{\lambda^{+}(n)}\left(w\left(g_{x_{1}},\dots,g_{x_{r}}\right)\right)=\frac{1}{d_{\lambda}}\mathrm{tr}_{\mathcal{U}_{\lambda^{+}(n)}}\left(w\left(g_{x_{1}},\dots,g_{x_{r}}\right)\right),
\]
so that 
\begin{equation}
\begin{aligned} & \chi^{\lambda^{+}(n)}\left(w\left(g_{x_{1}},\dots,g_{x_{r}}\right)\right)\\
= & \frac{1}{d_{\lambda}}\sum_{p_{i}}\left\langle g_{f_{1}}^{\epsilon_{1}}v_{p_{2}},\ v_{p_{1}}\right\rangle \left\langle g_{f_{2}}^{\epsilon_{2}}v_{p_{3}},\ v_{p_{2}}\right\rangle \dots\left\langle g_{f_{l(w)}}^{\epsilon_{l(w)}}v_{p_{1}},\ v_{p_{l(w)}}\right\rangle .
\end{aligned}
\label{eq: character as sum of inner products}
\end{equation}
For each $p$, write $v_{p}$ in the standard orthonormal basis of
$\cnk$: 
\[
v_{p}=\sum_{I}\beta_{p,I}e_{I}.
\]
Recall that $\mathcal{U}_{\lambda^{+}(n)}\subset D_{k}(n),$ so that
we may assume that the above sum is over all $I=\left(i_{1},\dots,i_{k}\right)$
\emph{with all indices distinct. }This is an important observation. 

We introduce some new notation to avoid the cumbersome general expression
for $\mathop{\mathbb{E}}_{w}\left[\chi^{\lambda^{+}(n)}\right].$
For each $f\in\{x_{1},\dots,x_{r}\},$ we will write 
\[
\sum_{I^{f}}
\]
in place of 
\[
\sum_{I_{f}^{1},\dots I_{f}^{|w|_{f}}}.
\]
For any pair $I,J$ of multi--indices and $\epsilon\in\{1,-1\},$
we define 
\[
(I/J)(\epsilon)=\begin{cases}
J & \mathrm{if}\ \epsilon=1\\
I & \mathrm{if}\ \epsilon=-1.
\end{cases}
\]

In our expression, we will simply write $(I/J)$ in place of $(I/J)(\epsilon)$
where it is clear which epsilon we are considering. To be even more
clear, in the sum below, for each $f_{i},$ if $\epsilon_{i}=1,$
then the corresponding inner product has $e_{I_{f_{i}}^{z}}$ on the
LHS, where $z\in\{1,\dots,|w|_{f_{i}}\}$ denotes the number of times
$f_{i}$ has appeared in the subword $f_{1}^{\epsilon_{1}}\dots f_{i}^{\epsilon_{i}}.$
The corresponding pair of $\beta$--terms has $\beta_{p,I},$ with
$\beta_{p,J}$ conjugated. If $\epsilon_{i}=-1,$ we swap the positions
of $I$ and $J$.\footnote{We use this notation so that, in the graph construction detailed in
Section \ref{subsec:Graphical-Interpretation}, $\left(J_{f}^{i}\right)_{j}$
always represents the \emph{initial }vertex of an $f$--edge and
$\left(I_{f}^{i}\right)_{j}$ always represents the \emph{terminal
}vertex.}

We will write 
\[
\begin{aligned} & \prod_{f,w}\beta\\
= & \left(\beta_{p_{2},\left(I_{f_{1}}^{1}/J_{f_{1}}^{1}\right)}\right)\left(\bar{\beta}_{p_{1},\left(J_{f_{1}}^{1}/I_{f_{1}}^{1}\right)}\right)\dots\\
 & \dots\left(\beta_{p_{1},\left(I_{f_{l(w)}}^{|w|_{f_{l(w)}}}/J_{f_{l(w)}}^{|w|_{f_{l(w)}}}\right)}\right)\left(\bar{\beta}_{p_{l(w)},\left(J_{f_{l(w)}}^{|w|_{f_{l(w)}}}/I_{f_{l(w)}}^{|w|_{f_{l(w)}}}\right)}\right).
\end{aligned}
\]
With this notation, (\ref{eq: character as sum of inner products})
is equal to
\begin{equation}
\begin{aligned} & \frac{1}{d_{\lambda}}\sum_{p_{i}}\sum_{I^{f},J^{f}}\left(\prod_{f,w}\beta\right)\left\langle g_{f_{1}}^{\epsilon_{1}}e_{\left(I_{f_{1}}^{1}/J_{f_{1}}^{1}\right)},\ e_{\left(J_{f_{1}}^{1}/I_{f_{1}}^{1}\right)}\right\rangle \dots\\
 & \dots\left\langle g_{f_{l(w)}}^{\epsilon_{l(w)}}e_{\left(I_{f_{l(w)}}^{|w|_{f_{l(w)}}}/J_{f_{l(w)}}^{|w|_{f_{l(w)}}}\right)},e_{\left(J_{f_{l(w)}}^{|w|_{f_{l(w)}}}/I_{f_{l(w)}}^{|w|_{f_{l(w)}}}\right)}\right\rangle .
\end{aligned}
\label{eq: character with beta and inner product terms}
\end{equation}

See the example below. \emph{Without losing generality, we will always
assume that $f_{1}=x_{1}$ and that $\epsilon_{1}=1$. }
\begin{example}
\begin{flushleft}
\label{exa: expression for expected trace with w=00003D =00005Ba,b=00005D}Suppose
$w=x_{1}x_{2}x_{1}^{-1}x_{2}^{-1}.$ Then $\chi^{\lambda^{+}(n)}\left(w\left(g_{x_{1}},g_{x_{2}}\right)\right)$
is equal to
\[
\begin{aligned}\frac{1}{d_{\lambda}}\sum_{p_{i}}\sum_{\begin{aligned}I_{x_{1}}^{1},I_{x_{1}}^{2},I_{x_{2}}^{1},I_{x_{2}}^{2},\\
J_{x_{1}}^{1},J_{x_{1}}^{2},J_{x_{2}}^{1},J_{x_{2}}^{2}
\end{aligned}
}\left(\beta_{p_{2},I_{x_{1}}^{1}}\right)\left(\bar{\beta}_{p_{1},J_{x_{1}}^{1}}\right)\left(\beta_{p_{3},I_{x_{2}}^{1}}\right)\left(\bar{\beta}_{p_{2},J_{x_{2}}^{1}}\right)\left(\beta_{p_{4},J_{x_{1}}^{2}}\right)\left(\bar{\beta}_{p_{3},I_{x_{1}}^{2}}\right)\\
\left(\beta_{p_{1},J_{x_{2}}^{2}}\right)\left(\bar{\beta}_{p_{4},I_{x_{2}}^{2}}\right)\left\langle g_{x_{1}}e_{J_{x_{1}}^{1}},e_{I_{x_{1}}^{1}}\right\rangle \left\langle g_{x_{2}}e_{J_{x_{2}}^{1}},e_{I_{x_{2}}^{1}}\right\rangle \left\langle g_{x_{1}}^{-1}e_{I_{x_{1}}^{2}},e_{J_{x_{1}}^{2}}\right\rangle \left\langle g_{x_{2}}^{-1}e_{I_{x_{2}}^{2}},e_{J_{x_{2}}^{2}}\right\rangle .
\end{aligned}
\]

\par\end{flushleft}
\end{example}

We rewrite the inner product terms as products of matrix coefficients
-- for example, 
\[
\left\langle g_{x_{1}}e_{J_{x_{1}}^{1}},e_{I_{x_{1}}^{1}}\right\rangle =\left(g_{x_{1}}\right)_{\left(I_{x_{1}}^{1}\right)_{1}\left(J_{x_{1}}^{1}\right)_{1}}.\dots\left(g_{x_{1}}\right)_{\left(I_{x_{1}}^{1}\right)_{k}\left(J_{x_{1}}^{1}\right)_{k}}.
\]
Rearranging and taking the expectation over $g_{x_{1}},\dots,g_{x_{r}}\in S_{n},$
we obtain from (\ref{eq: character with beta and inner product terms})
that $\mathop{\mathbb{E}}_{w}\left[\chi^{\lambda^{+}(n)}\right]$
is equal to 
\begin{equation}
\frac{1}{d_{\lambda}}\sum_{p_{i}}\sum_{I^{f},J^{f}}\left(\prod_{f,w}\beta\right)\prod_{f\in\{x_{1},\dots,x_{r}\}}\int_{g_{I^{f},J^{f}}},\label{eq: expected character with beta and integral over matrix coefficient terms}
\end{equation}
where 
\[
\int_{g_{I^{f},J^{f}}}\overset{\mathrm{def}}{=}\int_{S_{n}}\prod_{i=1}^{|w|_{f}}\left(g_{f}\right)_{\left(I_{f}^{i}\right)_{1}\left(J_{f}^{i}\right)_{1}}.\dots\left(g_{f}\right)_{\left(I_{f}^{i}\right)_{k}\left(J_{f}^{i}\right)_{k}}dg_{f}.
\]
For each $f\in\{x_{1},\dots,x_{r}\}$ and for each fixed collection
of multi--indices $I_{f}^{1},\dots,I_{f}^{|w|_{f}},J_{f}^{1},\dots,J_{f}^{|w|_{f}}$,
this integral can be computed using the Weingarten calculus for the
symmetric group. Using (\ref{thm: weingarten calculus}) we have 
\[
\begin{aligned} & \int_{S_{n}}\prod_{i=1}^{|w|_{f}}\left(g_{f}\right)_{\left(I_{f}^{i}\right)_{1}\left(J_{f}^{i}\right)_{1}}.\dots\left(g_{f}\right)_{\left(I_{f}^{i}\right)_{k}\left(J_{f}^{i}\right)_{k}}dg_{f}\\
= & \sum_{\sigma_{f},\tau_{f}\in\mathrm{Part}\left(\left[|w|_{f}k\right]\right)}\delta_{\sigma_{f}}\left(J_{f}^{1}\sqcup J_{f}^{2}\sqcup\dots\sqcup J_{f}^{|w|_{f}}\right)\delta_{\tau_{f}}\left(I_{f}^{1}\sqcup I_{f}^{2}\sqcup\dots\sqcup I_{f}^{|w|_{f}}\right)\\
 & \mathrm{Wg}_{n,\left(|w|_{f}k\right)}\left(\sigma_{f},\tau_{f}\right).
\end{aligned}
\]
Now we give an improvement over the usual Weingarten calculus that
simplifies the above equation greatly. The benefit of the improvement
is that, instead of summing over \emph{all }set partitions $\sigma_{f},\tau_{f}\in\mathrm{Part}\left(\left[|w|_{f}k\right]\right),$
we show that we need only sum over set partitions $\sigma_{f},\tau_{f}$
that have a specific structure, since the contribution to (\ref{eq: expected character with beta and integral over matrix coefficient terms})
from the set partitions without this structure is zero.

To each partition we associate a diagram (in a similar way to \S\ref{subsec:Partition-algebra})
-- the diagram has $|w|_{f}$ rows, each containing exactly $k$
vertices. The vertices in row $R$ are labeled from $(R-1)k+1$ to
$Rk$, and two vertices are connected if and only if the corresponding
vertex labels are in the same block of of the partition. We will also
refer to the diagrams corresponding to $\sigma_{f}$ and $\tau_{f}$
as $\sigma_{f}$ and $\tau_{f}$ where this does not cause confusion.
\begin{lem}
\label{lem: sigma and tau have no two vertices in the same row connected}For
each $f\in\{x_{1},\dots,x_{r}\},$ if $\sigma_{f}$ has any two vertices
from the same row connected, then 
\[
\delta_{\sigma_{f}}\left(J_{f}^{1}\sqcup J_{f}^{2}\sqcup\dots\sqcup J_{f}^{|w|_{f}}\right)=0.
\]
The same is true for $\tau_{f}$ and $I_{f}^{1}\sqcup I_{f}^{2}\sqcup\dots\sqcup I_{f}^{|w|_{f}}.$ 
\end{lem}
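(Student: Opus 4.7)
The plan is to exploit the fact that the orthonormal basis $\{v_p\}$ spans $\mathcal{U}_{\lambda,n}$, which by Lemma~\ref{lem: construction of irreducible block} sits inside $\A$, hence inside $D_k(n)$. Recalling Definition~\ref{def: Dkn }, this means that when we expand $v_p=\sum_I \beta_{p,I} e_I$, the coefficient $\beta_{p,I}$ vanishes whenever the multi-index $I$ has any repeated entry. So the effective sum is only over multi-indices with all $k$ entries distinct; this is precisely the observation already flagged right after (\ref{eq: character as sum of inner products}).

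First I would inspect the product $\prod_{f,w}\beta$ in (\ref{eq: expected character with beta and integral over matrix coefficient terms}) to verify that every multi-index $J_f^i$ (and every $I_f^i$) appears as the second subscript of at least one $\beta$ or $\bar{\beta}$ factor. This is immediate from the explicit form of $\prod_{f,w}\beta$ stated just before the example: the $\beta$'s are indexed precisely by the pairs $\bigl(p,(I/J)(\epsilon)\bigr)$ arising from each letter of $w$. Consequently, within the full sum, only multi-indices $J_f^i$ with pairwise distinct entries contribute non-zero terms.

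Now suppose $\sigma_f$ has two vertices from the same row $R$ connected, corresponding to two positions $a,b\in\{1,\ldots,k\}$ with $a\neq b$. By the definition of $\delta_{\sigma_f}$ (see \S\ref{subsec:The-Weingarten-calculus}), the non-vanishing of $\delta_{\sigma_f}\bigl(J_f^{1}\sqcup\cdots\sqcup J_f^{|w|_f}\bigr)$ would require $(J_f^R)_a=(J_f^R)_b$, forcing $J_f^R$ to contain a repeated entry. This contradicts the distinctness condition established above, so the corresponding $\beta$-factor containing $J_f^R$ vanishes and the term contributes zero to the sum. The argument for $\tau_f$ and $I_f^1\sqcup\cdots\sqcup I_f^{|w|_f}$ is identical, using that each $I_f^i$ also appears as an index in some $\beta$-factor.

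There is no substantive obstacle here; the whole point is simply that the refinement of Schur--Weyl--Jones duality from \cite{Cassidy2023}, which pins the basis vectors inside $D_k(n)$, forces a structural restriction on the Weingarten set partitions $\sigma_f,\tau_f$ that can contribute non-trivially. The only mild subtlety is interpretative: the lemma should be read as a statement about the surviving terms in the sum (\ref{eq: expected character with beta and integral over matrix coefficient terms}), rather than as a pointwise identity for $\delta_{\sigma_f}$ on arbitrary inputs.
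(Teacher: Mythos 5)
Your proposal is correct and follows essentially the same route as the paper: the paper's proof likewise observes that since $e_{J_{f}^{1}}\in D_{k}(n)$ (the relevant basis vectors lie in $\mathcal{U}_{\lambda,n}\subset\A\subset D_{k}(n)$, so only multi--indices with distinct entries contribute), a block of $\sigma_{f}$ containing two vertices of the same row would force two distinct entries of some $J_{f}^{R}$ to coincide, so $\delta_{\sigma_{f}}$ vanishes on the contributing terms, and the same argument applies to $\tau_{f}$ and the $I_{f}^{i}$. Your remark that the lemma should be read as a statement about the surviving terms of the sum rather than a pointwise identity is exactly the implicit reading in the paper's proof.
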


\begin{proof}
Since $e_{J_{f}^{1}}\in D_{k}(n)=\dkn$, if any of the vertices in
the top row of $\sigma_{f}$ are in the same block, then 
\[
\delta_{\sigma_{f}}\left(J_{f}^{1}\sqcup J_{f}^{2}\sqcup\dots\sqcup J_{f}^{|w|_{f}}\right)=0.
\]
 Repeating this argument for $J_{f}^{2},\dots J_{f}^{|w|_{f}}$ proves
the claim for $\sigma_{f}$ and then repeating it for $I_{f}^{1}\sqcup I_{f}^{2}\sqcup\dots\sqcup I_{f}^{|w|_{f}}$
proves the claim for $\tau_{f}.$
\end{proof}
The next lemma asserts that every vertex of $\sigma_{f}$ and $\tau_{f}$
must be connected to at least one other vertex.
\begin{lem}
\label{lem: sigma and tau have k blocks of size |w|f}For any $f\in\{x_{1},\dots,x_{r}\},$
if $\sigma_{f}$ has any singletons, then 
\[
\begin{aligned} & \frac{1}{d_{\lambda}}\sum_{p_{i}}\sum_{I^{f},J^{f}}\left(\prod_{f,w}\beta\right)\\
 & \Bigg[\sum_{\tau_{f}\in\mathrm{Part}\left(\left[|w|_{f}k\right]\right)}\delta_{\sigma_{f}}\left(J_{f}^{1}\sqcup J_{f}^{2}\sqcup\dots\sqcup J_{f}^{|w|_{f}}\right)\delta_{\tau_{f}}\left(I_{f}^{1}\sqcup I_{f}^{2}\sqcup\dots\sqcup I_{f}^{|w|_{f}}\right)\\
 & \mathrm{Wg}_{n,\left(|w|_{f}k\right)}\left(\sigma_{f},\tau_{f}\right)\Bigg]=0.
\end{aligned}
\]
The same is true when we swap $\sigma_{f}$ and $\tau_{f}.$ 
\end{lem}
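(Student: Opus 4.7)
The plan is to exploit the defining property of $\mathcal{U}_{\lambda,n}$: since each basis vector $v_p$ lies in $\mathcal{U}_{\lambda,n}\subseteq \A \subseteq \bigcap_{j=1}^{k}\ker(T_j)$, summing any coefficient $\beta_{p,J}$ over a single tensor coordinate of $J$ produces zero. A singleton block of $\sigma_f$ at the label $(R-1)k+s$ is precisely a ``free'' coordinate $\bigl(J_f^R\bigr)_s$ on which nothing in the expression outside one $\beta$--factor depends, so carrying out that inner sum first will yield a coefficient of $T_s(v_p)=0$.

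First I would carefully track the dependence of the summand on the coordinate $\bigl(J_f^R\bigr)_s$. The multi--index $J_f^R$ appears in the product $\prod_{f,w}\beta$ in exactly one factor --- namely $\beta_{p,J_f^R}$ or $\bar\beta_{p,J_f^R}$ for some $p\in\{p_1,\dots,p_{l(w)}\}$ determined by the position and sign of the corresponding letter of $w$. The factor $\delta_{\tau_f}$ depends only on the multi--indices $I_f^{\bullet}$; the Weingarten function does not involve indices at all; and $\delta_{\sigma_f}$ enforces only that the coordinates within each block of $\sigma_f$ be equal, so a singleton block imposes no constraint whatsoever on $\bigl(J_f^R\bigr)_s$. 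Similarly, every factor associated to another generator $f'\neq f$ is independent of $\bigl(J_f^R\bigr)_s$.

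Hence we may carry out the inner sum over $\bigl(J_f^R\bigr)_s\in[n]$ first, with everything else held fixed. Since $\beta_{p,J_f^R}=0$ whenever $J_f^R$ has repeated entries (because $v_p\in D_k(n)$), the sum may be freely extended to all of $[n]$ without changing its value; what remains is precisely the coefficient of the basis vector indexed by $J_f^R$ with its $s$--th entry removed in the expansion of $T_s(v_p)$ in the standard basis of $\bigl(\C^n\bigr)^{\otimes(k-1)}$. This coefficient vanishes because $v_p\in\ker(T_s)$, so the whole expression collapses to zero. The $\tau_f$ statement is proved by an identical argument with $J_f^R$ replaced by $I_f^R$ throughout, picking out instead the unique $\beta$--factor indexed by $I_f^R$.

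The main obstacle is notational bookkeeping rather than anything conceptual: one must convince oneself that amongst the many $\beta$--factors, $\delta$--constraints, and Weingarten terms, the coordinate $\bigl(J_f^R\bigr)_s$ really does enter through only the single $\beta$--factor described above, so that the inner sum cleanly produces $T_s(v_p)$. Once this is pinned down, the lemma is an immediate consequence of $\mathcal{U}_{\lambda,n}\subseteq\bigcap_{j}\ker(T_j)$.
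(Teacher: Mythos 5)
Your proposal is correct and follows essentially the same route as the paper: fix everything except the coordinate corresponding to the singleton block, note that it enters only through the single $\beta$--factor indexed by that multi--index (with $\delta_{\sigma_f}$ imposing no constraint and the Weingarten factor index--free), and identify the resulting inner sum as a coefficient of $T_s(v_p)$, which vanishes since $v_p\in\mathcal{U}_{\lambda,n}\subset\A\subseteq\ker(T_s)$. This matches the paper's argument, so no further changes are needed.
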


\begin{proof}
Without loss of generality, suppose that the vertex $q$ in the first
row of vertices of $\sigma_{f}$ is a singleton. For all variables
in the sum fixed except for $J_{f}^{1},J_{f}^{2},\dots,J_{f}^{|w|_{f}},$
assuming 
\[
\delta_{\tau_{f}}\left(I_{f}^{1}\sqcup I_{f}^{2}\sqcup\dots\sqcup I_{f}^{|w|_{f}}\right)\neq0,
\]
then the sum is equal to 
\[
\sum_{J_{f}^{1},\dots,J_{f}^{|w|_{f}}}\left(\bar{\beta}_{p_{1},J_{f}^{1}}\right)\dots\left(\beta_{p,J_{f}^{|w|_{f}}}\right)\delta_{\sigma_{f}}\left(J_{f}^{1}\sqcup J_{f}^{2}\sqcup\dots\sqcup J_{f}^{|w|_{f}}\right),
\]
 multiplied by some constant coming from the other (fixed) $\beta$--terms
and the Weingarten function $\mathrm{Wg}_{n,\left(|w|_{f}k\right)}\left(\sigma_{f},\tau_{f}\right).$ 

Then, for every fixed index $\left(J_{f}^{1}\right)_{1},\dots,\left(J_{f}^{1}\right)_{q-1},\left(J_{f}^{1}\right)_{q+1},\dots,\left(J_{f}^{1}\right)_{k}$
and every fixed multi--index $J_{f}^{2},\dots,J_{f}^{|w|_{f}}$ satisfying
$\sigma_{f},$ this is (some constant multiplied by) 
\[
\sum_{\left(J_{f}^{1}\right)_{q}=1}^{n}\bar{\beta}_{p_{1},J_{f}^{1}}.
\]
This is exactly the conjugate of the coefficient of $e_{(J_{f}^{1})_{1}}\otimes\dots\otimes e_{(J_{f}^{1})_{q-1}}\otimes e_{(J_{f}^{1})_{q+1}}\otimes\dots e_{(J_{f}^{1})_{k}}$
in $T_{q}\left(v_{p_{1}}\right).$ But then, $v_{p_{1}}$ belongs
to an orthonormal basis for $\u,$ and $\u\subset\A.$ In particular,
$v_{p}\in\ker\left(T_{q}\right),$ so that 
\[
\sum_{\left(J_{f}^{1}\right)_{q}=1}^{n}\bar{\beta}_{p_{1},J_{f}^{1}}=0.
\]
\end{proof}
\emph{Henceforth, we will denote by $\overset{\star}{\mathrm{Part}}\left(\left[|w|_{f}k\right]\right)$}
\emph{the set of set partitions of} $\left[|w|_{f}k\right]$ \emph{for
which the corresponding diagram has no singletons and has no two vertices
in the same row in the same connected component. }

\emph{We will write 
\[
\sum_{\sigma_{f},\tau_{f}}^{\star}
\]
 to indicate that the sum is over $\sigma_{f},\tau_{f}\in\refinedpart.$}

For any given $\sigma_{f}\in\refinedpart$ (and similarly for $\tau_{f}$),
and any collection of multi--indices $J_{f}^{1},\dots,J_{f}^{|w|_{f}},$
we will write 
\[
J_{f}^{\Sigma}\leftrightarrow\sigma_{f}
\]
to indicate that 
\[
\delta_{\sigma_{f}}\left(J_{f}^{1}\sqcup J_{f}^{2}\sqcup\dots\sqcup J_{f}^{|w|_{f}}\right)=1.
\]
Therefore, our new expression for $\mathop{\mathbb{E}}_{w}\left[\chi^{\lambda^{+}(n)}\right]$
is 
\begin{equation}
\begin{aligned}\frac{1}{d_{\lambda}}\sum_{\sigma_{f},\tau_{f}}^{\star}\left(\prod_{f\in\{x_{1},\dots,x_{r}\}}\mathrm{Wg}_{n,\left(|w|_{f}k\right)}\left(\sigma_{f},\tau_{f}\right)\right)\sum_{p_{i}}\sum_{\begin{aligned}J_{f}^{\Sigma}\leftrightarrow\sigma_{f}\\
I_{f}^{\Sigma}\leftrightarrow\tau_{f}
\end{aligned}
}\left(\prod_{f,w}\beta\right).\end{aligned}
\label{eq: expected character with weingarten and beta terms}
\end{equation}
This expression can be further simplified using the projection $\Q_{\lambda,n}.$
Consider Example \ref{exa: expression for expected trace with w=00003D =00005Ba,b=00005D}
-- for all terms fixed in the expression except for e.g. $p_{2},$
the ensuing sum is 
\[
\begin{aligned} & \sum_{p_{2}}\left(\beta_{p_{2},I_{x_{1}}^{1}}\right)\left(\bar{\beta}_{p_{2},J_{x_{2}}^{1}}\right)\\
= & \sum_{p_{2}}\left\langle e_{I_{x_{1}}^{1}},v_{p_{2}}\right\rangle \left\langle v_{p_{2}},e_{J_{x_{2}}^{1}}\right\rangle \\
= & \left\langle \Q_{\lambda,n}\left(e_{I_{x_{1}}^{1}}\right),e_{J_{x_{2}}^{1}}\right\rangle \\
= & \sum_{\pi}^{\leq S_{k}}c(n,k,\lambda,\pi)\left\langle p_{\pi}\left(e_{I_{x_{1}}^{1}}\right),e_{J_{x_{2}}^{1}}\right\rangle .
\end{aligned}
\]
Repeating this and computing the sums over each $p_{i},$ the expression
in Example \ref{exa: expression for expected trace with w=00003D =00005Ba,b=00005D}
becomes 
\begin{equation}
\begin{aligned} & \frac{1}{d_{\lambda}}\sum_{\sigma_{f},\tau_{f}}^{\star}\sum_{\pi_{1},\dots,\pi_{l(w)}}^{\leq S_{k}}\left(\prod_{i=1}^{4}c(n,k,\lambda,\pi_{i})\right)\left(\prod_{f\in\{x_{1},x_{2}\}}\mathrm{Wg}_{n,(|w|_{f}k)}\left(\sigma_{f},\tau_{f}\right)\right)\\
 & \sum_{\sigma_{f}\leftrightarrow J_{f}^{\Sigma},\ \tau_{f}\leftrightarrow I_{f}^{\Sigma}}\left\langle p_{\pi_{1}}\left(e_{I_{x_{1}}^{1}}\right),e_{J_{x_{2}}^{1}}\right\rangle \left\langle p_{\pi_{2}}\left(e_{I_{x_{2}}^{1}}\right),e_{I_{x_{1}}^{2}}\right\rangle \\
 & \left\langle p_{\pi_{3}}\left(e_{J_{x_{1}}^{2}}\right),e_{I_{x_{2}}^{2}}\right\rangle \left\langle p_{\pi_{4}}\left(e_{J_{x_{2}}^{2}}\right),e_{J_{x_{1}}^{1}}\right\rangle .
\end{aligned}
\label{eq: expression for commutator before refining to leq}
\end{equation}
The same argument applies for any non--identity, non--primitive,
cyclically reduced word $w\in F_{r}.$
\begin{defn}
\label{def: N(sigma, tau, pi)}Given, for each $f\in\{x_{1},\dots,x_{r}\}$,
a collection of set partitions $\sigma_{f},\tau_{f}\in\refinedpart$
and a collection of set partitions $\pi_{1},\dots,\pi_{l(w)}\in\skpart,$
we write 
\[
\mathcal{N}\left(\sigma_{x_{1}},\tau_{x_{1}},\dots,\sigma_{x_{r}},\tau_{x_{r}},\pi_{1},\dots,\pi_{l(w)}\right)=\mathcal{N}\left(\sigma_{f},\tau_{f},\pi_{i}\right),
\]
for the number of multi--indices $I_{f}^{1},\dots I_{f}^{|w|_{f}},J_{f}^{1},\dots,J_{f}^{|w|_{f}},$
with all indices distinct, satisfying:
\begin{itemize}
\item $\sigma_{f}\leftrightarrow J_{f}^{\Sigma}$,
\item $\tau_{f}\leftrightarrow I_{f}^{\Sigma}$,
\item $\left\langle p_{\pi_{1}}\left(e_{I_{x_{1}}^{1}}\right),\ e_{(I/J)_{f_{2}}}\right\rangle =1$,
\end{itemize}
$\ \ \ \ \ \vdots$
\begin{itemize}
\item $\left\langle p_{\pi_{l(w)}}\left(e_{(I/J)_{f_{l(w)}}}\right),\ e_{J_{x_{1}}^{1}}\right\rangle =1,$ 
\end{itemize}
where $(I/J)$ is used to denote the correct multi--index arising
from the $\beta$--terms. 
\end{defn}

With this notation, we have proved the following theorem.
\begin{thm}
\label{thm:expected character pi, weingarten, N}Suppose $k\in\mathbb{Z}_{>0}.$
For any $\lambda\vdash k$ and every cyclically reduced, non--identity,
non--primitive word $w\in F_{r},$

\begin{equation}
\begin{aligned} & \mathbb{E}_{w}\left[\chi^{\lambda^{+}(n)}\right]\\
= & \frac{1}{d_{\lambda}}\sum_{\sigma_{f},\tau_{f}}^{\star}\sum_{\pi_{1},\dots,\pi_{l(w)}}^{\leq S_{k}}\left(\prod_{i=1}^{l(w)}c(n,k,\lambda,\pi_{i})\right)\left(\prod_{f\in\{x_{1},\dots,x_{r}\}}\mathrm{Wg}_{n,\left(|w|_{f}k\right)}\left(\sigma_{f},\tau_{f}\right)\right)\mathcal{N}\left(\sigma_{f},\tau_{f},\pi_{i}\right).
\end{aligned}
\label{eq: expected character with pi, weingarten and N}
\end{equation}
\end{thm}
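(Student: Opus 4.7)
The plan is to combine Corollary \ref{cor: bitrace corollary} with the Weingarten calculus and the projection formula of Theorem \ref{thm: main theorem of first paper} to reduce the expected character to a purely combinatorial sum. I would begin by applying the bitrace corollary, which rewrites $\chi^{\lambda^+(n)}(w(g_{x_1},\ldots,g_{x_r}))$ as $d_\lambda^{-1}\mathrm{btr}_{(\mathbb{C}^n)^{\otimes k}}(w(g_{x_1},\ldots,g_{x_r}),\mathcal{Q}_{\lambda,n})$. Inserting an orthonormal basis $\{v_p\}$ of $\mathcal{U}_{\lambda,n}$ between each letter of $w$ converts this bitrace into the cyclic product of inner products (\ref{eq: character as sum of inner products}); expanding each $v_p$ in the standard basis of $(\mathbb{C}^n)^{\otimes k}$ with coefficients $\beta_{p,I}$ then produces the raw expression (\ref{eq: character with beta and inner product terms}), in which the sums over $I,J$ are implicitly restricted to multi-indices with pairwise distinct entries since $\mathcal{U}_{\lambda,n}\subseteq D_k(n)$.

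Next I would take the expectation over each $g_f$ separately, grouping matrix coefficients by letter and applying Theorem \ref{thm: weingarten calculus} with $m=|w|_f k$. This introduces, for each $f$, a sum over $\sigma_f,\tau_f\in\mathrm{Part}([|w|_f k])$ weighted by $\mathrm{Wg}_{n,(|w|_f k)}(\sigma_f,\tau_f)$ and two $\delta$-matching constraints on the multi-indices $J_f^i$ and $I_f^i$. The key refinement is to restrict this sum to $\overset{\star}{\mathrm{Part}}([|w|_f k])$. Two observations accomplish this: first, a multi-index with pairwise distinct entries can never satisfy $\delta_{\sigma_f}$ if two vertices in the same row of $\sigma_f$ lie in the same block; second, any singleton $q$ in (say) the first row of $\sigma_f$ forces the free index $(J_f^1)_q$ to be summed against $\bar\beta_{p,J_f^1}$, which is precisely the coefficient of $T_q(v_p)$ in the standard basis of $(\mathbb{C}^n)^{\otimes(k-1)}$. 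Since $v_p\in\mathcal{U}_{\lambda,n}\subseteq\mathrm{Ker}(T_q)$ by Definition \ref{def: Akn}, this sum vanishes. This second vanishing argument is the main conceptual step and is where the refinement of Schur--Weyl--Jones duality is essential; without it, one would be stuck with the full partition algebra rather than its diagonal $S_k$ subalgebra.

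Finally, I would substitute the projection formula $\mathcal{Q}_{\lambda,n}=\sum_\pi c(n,k,\lambda,\pi)p_\pi$ from Theorem \ref{thm: main theorem of first paper} and collapse the sums over the isotypic indices $p_i$. After the Weingarten reduction, each inner sum of the form $\sum_{p_i}\beta_{p_i,\cdot}\bar\beta_{p_i,\cdot}$ is the matrix coefficient of $\mathcal{Q}_{\lambda,n}$ between two standard basis vectors, and the projection formula expands this into a sum over $\pi_i\in\mathrm{Part}([2k])$ weighted by $c(n,k,\lambda,\pi_i)$. Because every surviving multi-index has all coordinates distinct, the bracket $\langle p_{\pi_i}(e_{(I/J)_{f_i}}),e_{(I/J)_{f_{i+1}}}\rangle$ vanishes unless $\pi_i\leq\iota(\rho)$ for some $\rho\in S_k$, so the range of the $\pi_i$ sum collapses to $\mathrm{Part}_{\leq S_k}([2k])$. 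Interchanging the sums and packaging the remaining combinatorial index count into $\mathcal{N}(\sigma_f,\tau_f,\pi_i)$ from Definition \ref{def: N(sigma, tau, pi)} produces exactly (\ref{eq: expected character with pi, weingarten and N}). The main obstacle throughout is bookkeeping rather than a single hard step: the proof is essentially a disciplined chaining of three structural results (bitrace, Weingarten, projection) together with the two vanishing lemmas that exploit $\mathcal{U}_{\lambda,n}\subseteq A_k(n)$.
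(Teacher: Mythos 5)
Your proposal is correct and follows essentially the same route as the paper's own proof: expressing the character as a trace over $\mathcal{U}_{\lambda,n}$ via an orthonormal basis, letter-by-letter Weingarten integration, the two vanishing arguments (same-row blocks killed by distinctness of indices since $\mathcal{U}_{\lambda,n}\subseteq D_k(n)$, singletons killed by $\mathcal{U}_{\lambda,n}\subseteq\mathrm{Ker}(T_q)$), then collapsing the $p_i$-sums with the projection formula $\mathcal{Q}_{\lambda,n}=\sum_{\pi}c(n,k,\lambda,\pi)p_{\pi}$ and restricting to $\mathrm{Part}_{\leq S_k}([2k])$ before packaging the remaining index count as $\mathcal{N}(\sigma_f,\tau_f,\pi_i)$. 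The only discrepancy is the overall prefactor (your $d_{\lambda}^{-1}$ from the corollary versus the $d_{\lambda}$ in the theorem), which reflects an inconsistency already present between the paper's Corollary \ref{cor: bitrace corollary} and its proof of this theorem and does not affect the structure of the argument.
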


Combining (\ref{eq: expected character with pi, weingarten and N})
with Lemma \ref{rem: leading coeff of pi in projection} and Remark
\ref{rem: leading term for weingarten function}, the following bound
is immediate.
\begin{cor}
\label{cor: first bound for expected character with n^del, n^WG, big N}Suppose
$k\in\mathbb{Z}_{>0}.$ For any $\lambda\vdash k$ and every cyclically
reduced, non--identity, non--primitive word $w\in F_{r},$
\[
\mathbb{E}_{w}\left[\chi^{\lambda^{+}(n)}\right]\ll_{k,l(w)}\sum_{\sigma_{f},\tau_{f}}^{\star}\sum_{\pi_{1},\dots,\pi_{l(w)}}^{\leq S_{k}}n^{-\sum_{i=1}^{l(w)}\mathrm{del}\left(\pi_{i}\right)}n^{-\sum_{f}|\sigma_{f}\wedge\tau_{f}|}\mathcal{N}\left(\sigma_{f},\tau_{f},\pi_{i}\right).
\]
\end{cor}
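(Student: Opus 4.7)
The plan is to apply the two asymptotic bounds (on the projection coefficients $c(n,k,\lambda,\pi)$ and on the symmetric group Weingarten function) to the exact formula (\ref{eq: expected character with pi, weingarten and N}), and control the extra factors as constants depending only on $k$ and $l(w)$.

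First, I would start from the identity in Theorem \ref{thm:expected character pi, weingarten, N} and pass to the triangle inequality, noting that $\mathcal{N}(\sigma_{f},\tau_{f},\pi_{i})$ is a non-negative integer (it simply counts multi--indices) and that $d_{\lambda}$ depends only on $k$. Hence there is nothing to lose by replacing the summand by its absolute value, and the factor $d_{\lambda}$ can be absorbed into an implicit constant depending on $k$.

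Next, I would bound each of the analytic factors. For the projection coefficients, the summation index runs over $\pi_{i}\in\skpart$, which by definition means $\pi_{i}\leq\iota(\tau)$ for some $\tau\in S_{k}$. So Lemma \ref{rem: leading coeff of pi in projection} applies and gives $c(n,k,\lambda,\pi_{i})=O_{k}\bigl(n^{-\mathrm{del}(\pi_{i})}\bigr)$; taking the product over the $l(w)$ occurrences produces the factor $n^{-\sum_{i=1}^{l(w)}\mathrm{del}(\pi_{i})}$ together with an implicit constant depending on $k$ and $l(w)$. For the Weingarten factors, Remark \ref{rem: leading term for weingarten function} with $m=|w|_{f}k$ yields
\[
\mathrm{Wg}_{n,|w|_{f}k}(\sigma_{f},\tau_{f})=O_{k,l(w)}\bigl(n^{-|\sigma_{f}\wedge\tau_{f}|}\bigr),
\]
and taking the product over $f\in\{x_{1},\dots,x_{r}\}$ (there are at most $l(w)$ such $f$ since every generator appearing is a letter of $w$) produces the factor $n^{-\sum_{f}|\sigma_{f}\wedge\tau_{f}|}$ with a constant again depending only on $k$ and $l(w)$.

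Combining these two bounds inside the double sum in (\ref{eq: expected character with pi, weingarten and N}) and pulling the implicit constants out of the sum yields exactly the bound in the corollary. There is no real obstacle here: the step that requires a little care is merely checking that the implicit constants arising from Lemma \ref{rem: leading coeff of pi in projection} and Remark \ref{rem: leading term for weingarten function} depend only on $k$ and on $m=|w|_{f}k\leq l(w)k$, and not on $n$, so that they can legitimately be absorbed into the $\ll_{k,l(w)}$ notation.
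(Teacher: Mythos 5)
Your proposal is correct and follows essentially the same route as the paper, which obtains the corollary directly by substituting the bounds of Lemma \ref{rem: leading coeff of pi in projection} and Remark \ref{rem: leading term for weingarten function} into the formula (\ref{eq: expected character with pi, weingarten and N}) and absorbing $d_{\lambda}$ and the other implied constants into $\ll_{k,l(w)}$. Your additional care about the non-negativity of $\mathcal{N}\left(\sigma_{f},\tau_{f},\pi_{i}\right)$ and the dependence of the implicit constants only on $k$ and $l(w)$ is exactly the (routine) checking the paper leaves implicit when it calls the bound immediate.
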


\subsection{Graphical Interpretation\label{subsec:Graphical-Interpretation}}

We construct a graph for each collection of set partitions $\sigma_{f},\tau_{f},\pi_{1},\dots,\pi_{l(w)}.$
This is similar to the construction of a surface from a matching datum
by Magee in \cite{Magee2021}. Indeed, the graph is essentially the
same as the $1$--skeleton of the surfaces constructed there. 

The graph we construct will be denoted $G\left(\sigma_{f},\tau_{f},\pi_{i}\right).$
The information in (\ref{eq: expected character with pi, weingarten and N})
will be encoded in the properties of $\gsigma,$ allowing us to simplify
further by analyzing the graph rather than dealing with tricky combinatorial
arguments. Ultimately, from $\gsigma$ we will construct a new graph,
from which we derive Theorem \ref{thm: word map main theorem}.

$\gsigma$ is a graph with $2kl(w)$ vertices, separated in to $k$
distinct subsets, each containing exactly $2l(w)$ vertices. We number
the subsets from $1$ to $k$. For each $i\in\{1,\dots,k\}$, the
vertices in subset $i$ are labeled $\left(I_{f}^{1}\right)_{i},\dots\left(I_{f}^{|w|_{f}}\right)_{i},\left(J_{f}^{1}\right)_{i},\dots,\left(J_{f}^{|w|_{f}}\right)_{i}$
for each $f\in\{x_{1},\dots,x_{r}\}$. 

Within each subset $i$, for each $f\in\{x_{1},\dots,x_{r}\}$ and
for each $j\in\{1,\dots,|w|_{f}\},$ we draw a directed, $f$--labeled
edge from the vertex labeled $\left(J_{f}^{j}\right)_{i}$ to the
vertex labeled $\left(I_{f}^{j}\right)_{i}$. This gives a total of
$kl(w)$ directed edges. We refer to the $i^{\mathrm{th}}$ subset
of vertices as the $i^{\mathrm{th}}$ $w$--loop, see Figure \ref{fig: constructing G step 1 - vertices and directed edges only}. 
\begin{center}
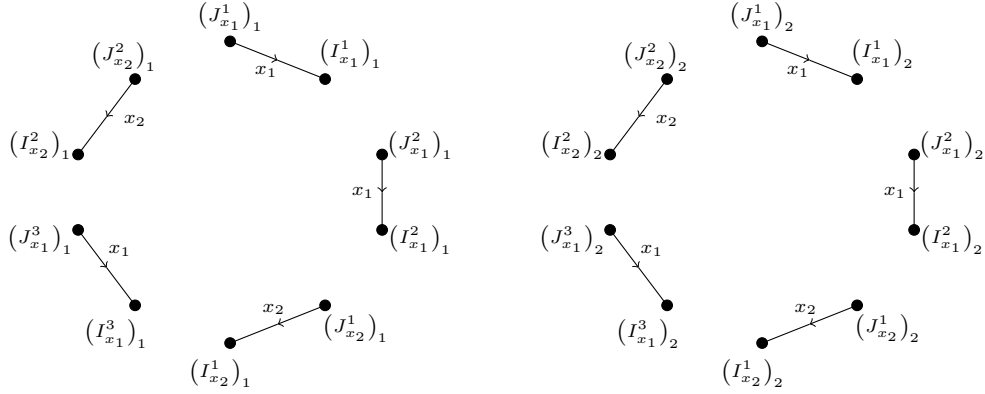
\begin{figure}[h]
\hspace{1.2cm}\begin{tikzpicture}[main/.style = draw, circle]

\node at (11,1.5) {};
\begin{scope}[shift={(-2,0)}]
\filldraw (0,2.5) circle (2pt); \node (1) at (0,2.825) {$\scriptstyle \left(J_{x_1}^1\right)_1$};
\filldraw (1.25,2) circle (2pt); \node (2) at (1.585,2.335) {$\scriptstyle \left(I_{x_1}^1\right)_1$};
\filldraw (2,1) circle (2pt); \node (3) at (2.5,1.125) {$\scriptstyle \left(J_{x_1}^2\right)_1$};
\filldraw (2,0) circle (2pt); \node (4) at (2.5,-0.125) {$\scriptstyle \left(I_{x_1}^2\right)_1$};
\filldraw (1.25,-1) circle (2pt); \node (5) at (1.65,-1.3) {$\scriptstyle \left(J_{x_2}^1\right)_1$};
\filldraw (0,-1.5) circle (2pt); \node (6) at (-0.125,-1.9) {$\scriptstyle \left(I_{x_2}^1\right)_1$};
\filldraw (-1.25,-1) circle (2pt); \node (7) at (-1.5,-1.375) {$\scriptstyle \left(I_{x_1}^3\right)_1$};
\filldraw (-2,0) circle (2pt); \node (8) at (-2.5,-0.125) {$\scriptstyle \left(J_{x_1}^3\right)_1$};
\filldraw (-2,1) circle (2pt); \node (9) at (-2.5,1.125) {$\scriptstyle \left(I_{x_2}^2\right)_1$};
\filldraw (-1.25,2) circle (2pt); \node (10) at (-1.4,2.3) {$\scriptstyle \left(J_{x_2}^2\right)_1$};

\draw[->] (0,2.5) -- (0.625,2.25); \draw  (0.625,2.25)--(1.25,2); \node (21) at (0.475,2.1) {$\scriptstyle x_1$};
\draw[->] (2,1) -- (2,0.5); \draw  (2,0.5)--(2,0); \node (22) at (1.75,0.5) {$\scriptstyle x_1$};
\draw[->] (1.25,-1) -- (0.625,-1.25); \draw  (0.625,-1.25)--(0,-1.5); \node (23) at (0.575,-1.05) {$\scriptstyle x_2$};
\draw (-1.25,-1)--(-1.625,-0.5); \draw[->] (-2,0)-- (-1.625,-0.5); \node (24) at (-1.45,-0.275) {$\scriptstyle x_1$};
\draw (-2,1)--(-1.625,1.5); \draw[->]  (-1.25,2)--(-1.625,1.5); \node (25) at (-1.25,1.45) {$\scriptstyle x_2$};

\filldraw (7,2.5) circle (2pt); \node (11) at (7,2.825) {$\scriptstyle \left(J_{x_1}^1\right)_2$};
\filldraw (8.25,2) circle (2pt); \node (12) at (8.585,2.335) {$\scriptstyle \left(I_{x_1}^1\right)_2$};
\filldraw (9,1) circle (2pt); \node (13) at (9.5,1.125) {$\scriptstyle \left(J_{x_1}^2\right)_2$};
\filldraw (9,0) circle (2pt); \node (14) at (9.5,-0.125) {$\scriptstyle \left(I_{x_1}^2\right)_2$};
\filldraw (8.25,-1) circle (2pt); \node (15) at (8.65,-1.3) {$\scriptstyle \left(J_{x_2}^1\right)_2$};
\filldraw (7,-1.5) circle (2pt); \node (16) at (6.875,-1.9) {$\scriptstyle \left(I_{x_2}^1\right)_2$};
\filldraw (5.75,-1) circle (2pt); \node (17) at (5.5,-1.375) {$\scriptstyle \left(I_{x_1}^3\right)_2$};
\filldraw (5,0) circle (2pt); \node (18) at (4.5,-0.125) {$\scriptstyle \left(J_{x_1}^3\right)_2$};
\filldraw (5,1) circle (2pt); \node (19) at (4.5,1.125) {$\scriptstyle \left(I_{x_2}^2\right)_2$};
\filldraw (5.75,2) circle (2pt); \node (20) at (5.6,2.3) {$\scriptstyle \left(J_{x_2}^2\right)_2$};

\draw[->] (7,2.5) -- (7.625,2.25); \draw  (7.625,2.25)--(8.25,2); \node (26) at (7.475,2.1) {$\scriptstyle x_1$};
\draw[->] (9,1) -- (9,0.5); \draw  (9,0.5)--(9,0); \node (27) at (8.75,0.5) {$\scriptstyle x_1$};
\draw[->] (8.25,-1) -- (7.625,-1.25); \draw  (7.625,-1.25)--(7,-1.5); \node (28) at (7.575,-1.05) {$\scriptstyle x_2$};
\draw (5.75,-1) -- (5.375,-0.5); \draw[->] (5,0)-- (5.375,-0.5); \node (29) at (5.55,-0.275) {$\scriptstyle x_1$};
\draw (5,1) -- (5.375,1.5); \draw[->] (5.75,2)-- (5.375,1.5); \node (30) at (5.75,1.45) {$\scriptstyle x_2$};
\end{scope}

\end{tikzpicture}

\caption{\label{fig: constructing G step 1 - vertices and directed edges only}Here
we have shown how to begin constructing $\protect\gsigma$ with $w=x_{1}x_{1}x_{2}x_{1}^{-1}x_{2}^{-1}$
and $k=2.$ There are $2kl(w)=20$ vertices, split in to $k=2$ distinct
subsets, each containing $2l(w)=10$ vertices each. The vertices on
the left (i.e. with the outer subscript ``$1$'') are the $1^{\mathrm{st}}$
$w$--loop and the vertices on the right are the $2^{\mathrm{nd}}$
$w$--loop. There are $k|w|_{x_{1}}=6$ directed $x_{1}$--edges
and $k|w|_{x_{2}}=4$ directed $x_{2}$--edges.}
\end{figure}
\par\end{center}

For each $f\in\{x_{1},\dots,x_{r}\}$, we add an undirected $\sigma_{f}$--edge
between any two vertices $\left(J_{f}^{j}\right)_{i}$ and $\left(J_{f}^{j'}\right)_{i'}$
whenever $\sigma_{f}$ dictates that the corresponding indices must
be equal in order for $\delta_{\sigma_{f}}\left(J_{f}^{1}\sqcup J_{f}^{2}\sqcup\dots\sqcup J_{f}^{|w|_{f}}\right)$
to be non--zero. This is illustrated in Figure \ref{fig: consturcting graph with vertices, directed edges and sigma edges}. 

\begin{figure}[H]
\vspace{-0.5cm}

\begin{tikzpicture}[main/.style = draw, circle]


\draw [red]  (0,2.5) to (2,1); \draw [red] (2,1) to (5,0); \draw [red] (0,2.5) [out=30] to (5,0);
\draw [red] (7,2.5) to (9,1); \draw [red] (-2,0) .. controls (-4.5,4) and (8,6) ..  (9,1); \draw[red] (-2,0) .. controls (4.5, 3)..  (7,2.5);

\draw [violet] (-1.25,2) to (1.25,-1); \draw[violet] (5.75,2) to (8.25,-1);

\filldraw (0,2.5) circle (2pt); \node (1) at (0,2.825) {$\scriptstyle \left(J_{x_1}^1\right)_1$};
\filldraw (1.25,2) circle (2pt); \node (2) at (1.585,2.335) {$\scriptstyle \left(I_{x_1}^1\right)_1$};
\filldraw (2,1) circle (2pt); \node (3) at (2.5,1.125) {$\scriptstyle \left(J_{x_1}^2\right)_1$};
\filldraw (2,0) circle (2pt); \node (4) at (2.5,-0.125) {$\scriptstyle \left(I_{x_1}^2\right)_1$};
\filldraw (1.25,-1) circle (2pt); \node (5) at (1.65,-1.3) {$\scriptstyle \left(J_{x_2}^1\right)_1$};
\filldraw (0,-1.5) circle (2pt); \node (6) at (-0.125,-1.9) {$\scriptstyle \left(I_{x_2}^1\right)_1$};
\filldraw (-1.25,-1) circle (2pt); \node (7) at (-1.5,-1.375) {$\scriptstyle \left(I_{x_1}^3\right)_1$};
\filldraw (-2,0) circle (2pt); \node (8) at (-2.5,-0.125) {$\scriptstyle \left(J_{x_1}^3\right)_1$};
\filldraw (-2,1) circle (2pt); \node (9) at (-2.5,1.125) {$\scriptstyle \left(I_{x_2}^2\right)_1$};
\filldraw (-1.25,2) circle (2pt); \node (10) at (-1.4,2.3) {$\scriptstyle \left(J_{x_2}^2\right)_1$};

\draw[->] (0,2.5) -- (0.625,2.25); \draw  (0.625,2.25)--(1.25,2); \node (21) at (0.475,2.1) {$\scriptstyle x_1$};
\draw[->] (2,1) -- (2,0.5); \draw  (2,0.5)--(2,0); \node (22) at (1.75,0.5) {$\scriptstyle x_1$};
\draw[->] (1.25,-1) -- (0.625,-1.25); \draw  (0.625,-1.25)--(0,-1.5); \node (23) at (0.575,-1.05) {$\scriptstyle x_2$};
\draw (-1.25,-1)--(-1.625,-0.5); \draw[->] (-2,0)-- (-1.625,-0.5); \node (24) at (-1.45,-0.275) {$\scriptstyle x_1$};
\draw (-2,1)--(-1.625,1.5); \draw[->]  (-1.25,2)--(-1.625,1.5); \node (25) at (-1.25,1.45) {$\scriptstyle x_2$};

\filldraw (7,2.5) circle (2pt); \node (11) at (7,2.825) {$\scriptstyle \left(J_{x_1}^1\right)_2$};
\filldraw (8.25,2) circle (2pt); \node (12) at (8.585,2.335) {$\scriptstyle \left(I_{x_1}^1\right)_2$};
\filldraw (9,1) circle (2pt); \node (13) at (9.5,1.125) {$\scriptstyle \left(J_{x_1}^2\right)_1$};
\filldraw (9,0) circle (2pt); \node (14) at (9.5,-0.125) {$\scriptstyle \left(I_{x_1}^2\right)_2$};
\filldraw (8.25,-1) circle (2pt); \node (15) at (8.65,-1.3) {$\scriptstyle \left(J_{x_2}^1\right)_2$};
\filldraw (7,-1.5) circle (2pt); \node (16) at (6.875,-1.9) {$\scriptstyle \left(I_{x_2}^1\right)_2$};
\filldraw (5.75,-1) circle (2pt); \node (17) at (5.5,-1.375) {$\scriptstyle \left(I_{x_1}^3\right)_2$};
\filldraw (5,0) circle (2pt); \node (18) at (4.5,-0.125) {$\scriptstyle \left(J_{x_1}^3\right)_2$};
\filldraw (5,1) circle (2pt); \node (19) at (4.5,1.125) {$\scriptstyle \left(I_{x_2}^2\right)_2$};
\filldraw (5.75,2) circle (2pt); \node (20) at (5.6,2.3) {$\scriptstyle \left(J_{x_2}^2\right)_2$};

\draw[->] (7,2.5) -- (7.625,2.25); \draw  (7.625,2.25)--(8.25,2); \node (26) at (7.475,2.1) {$\scriptstyle x_1$};
\draw[->] (9,1) -- (9,0.5); \draw  (9,0.5)--(9,0); \node (27) at (8.75,0.5) {$\scriptstyle x_1$};
\draw[->] (8.25,-1) -- (7.625,-1.25); \draw  (7.625,-1.25)--(7,-1.5); \node (28) at (7.575,-1.05) {$\scriptstyle x_2$};
\draw (5.75,-1) -- (5.375,-0.5); \draw[->] (5,0)-- (5.375,-0.5); \node (29) at (5.55,-0.275) {$\scriptstyle x_1$};
\draw (5,1) -- (5.375,1.5); \draw[->] (5.75,2)-- (5.375,1.5); \node (30) at (5.75,1.45) {$\scriptstyle x_2$}; 

\end{tikzpicture} 

\caption{\label{fig: consturcting graph with vertices, directed edges and sigma edges}Here
we continue the construction of the graph from Figure \ref{fig: constructing G step 1 - vertices and directed edges only}
by adding in the $\sigma_{x_{1}}$--edges (in red) and the $\sigma_{x_{2}}$--edges
(in purple). In this example, we have $\sigma_{x_{1}}=\big\{\{1,3,6\},\ \{2,4,5\}\big\}\in\mathrm{Part}\left([3k]\right]$
and $\sigma_{x_{2}}=\big\{\{1,3\},\ \{2,4\}\big\}\in\protect\part$.}
\end{figure}
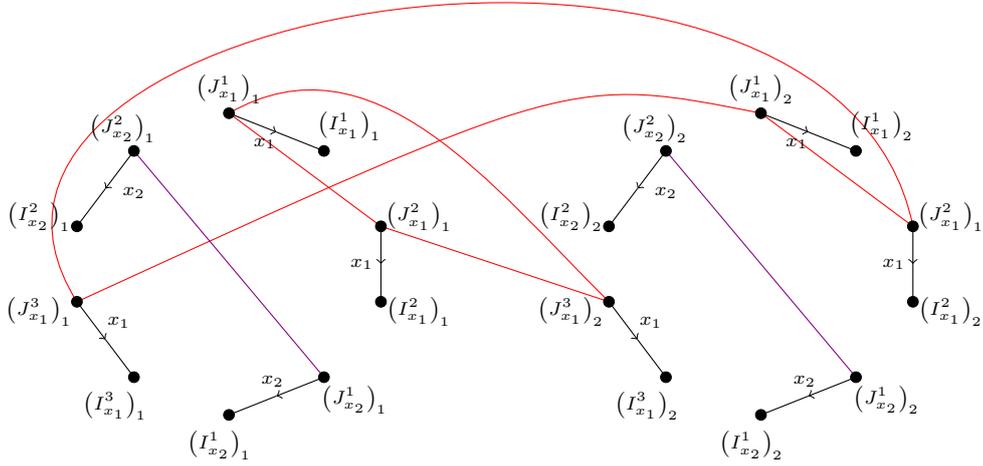

For each $f\in\{x_{1},\dots,x_{r}\}$, we add undirected $\tau_{f}$--edges
similarly, illustrated in Figure \ref{fig: construction of graph with vertices, directed edges, sigma and tau edges}.
Finally, for each $i\in\{1,\dots,l(w)\},$ we add an undirected $\pi_{i}$--edge
between any two vertices whenever $\pi_{i}$ dictates that the\emph{
}corresponding indices must be equal\emph{ }in order for the corresponding
inner product term in Definition \ref{def: N(sigma, tau, pi)} to
be non--zero. 

\emph{In short, for each collection of set partitions $\sigma_{f},\tau_{f},\pi_{1},\dots,\pi_{l(w)},$
we draw an undirected edge between any two vertices for which the
indices with the same label are necessarily equal for the conditions
in Definition \ref{def: N(sigma, tau, pi)} to hold. }

Figure \ref{fig: final construction of gsigma} shows a complete example
of $\gsigma$, with each $\sigma_{f},$ $\tau_{f}$ and $\pi_{i}$
as described in the examples throughout this section. 

\begin{figure}[H]
\begin{tikzpicture}[main/.style = draw, circle]


\draw [red]  (0,2.5) to (2,1); \draw [red] (2,1) to (5,0); \draw [red] (0,2.5) [out=30] to (5,0);
\draw [red] (7,2.5) to (9,1); \draw [red] (-2,0) .. controls (-4.5,4) and (8,6) ..  (9,1); \draw[red] (-2,0) .. controls (4.5, 3)..  (7,2.5);

\draw [violet] (-1.25,2) to (1.25,-1); \draw[violet] (5.75,2) to (8.25,-1);

\draw [blue] (1.25,2) to (2,0); \draw [blue] (2,0) to (-1.25,-1); \draw [blue] (-1.25,-1) to (1.25,2);
\draw [blue] (8.25,2) to (9,0); \draw [blue] (9,0) to (5.75,-1); \draw [blue] (5.75,-1) to (8.25,2);

\draw [cyan] (0,-1.5) .. controls (2,-2.75) and (3.3,0.5) .. (5,1); \draw[cyan] (-2,1) .. controls (-3.5,-1.75) and (3, -4) .. (7,-1.5);

\filldraw (0,2.5) circle (2pt); \node (1) at (0,2.825) {$\scriptstyle \left(J_{x_1}^1\right)_1$};
\filldraw (1.25,2) circle (2pt); \node (2) at (1.585,2.335) {$\scriptstyle \left(I_{x_1}^1\right)_1$};
\filldraw (2,1) circle (2pt); \node (3) at (2.5,1.125) {$\scriptstyle \left(J_{x_1}^2\right)_1$};
\filldraw (2,0) circle (2pt); \node (4) at (2.5,-0.125) {$\scriptstyle \left(I_{x_1}^2\right)_1$};
\filldraw (1.25,-1) circle (2pt); \node (5) at (1.65,-1.3) {$\scriptstyle \left(J_{x_2}^1\right)_1$};
\filldraw (0,-1.5) circle (2pt); \node (6) at (-0.125,-1.9) {$\scriptstyle \left(I_{x_2}^1\right)_1$};
\filldraw (-1.25,-1) circle (2pt); \node (7) at (-1.5,-1.375) {$\scriptstyle \left(I_{x_1}^3\right)_1$};
\filldraw (-2,0) circle (2pt); \node (8) at (-2.5,-0.125) {$\scriptstyle \left(J_{x_1}^3\right)_1$};
\filldraw (-2,1) circle (2pt); \node (9) at (-2.5,1.125) {$\scriptstyle \left(I_{x_2}^2\right)_1$};
\filldraw (-1.25,2) circle (2pt); \node (10) at (-1.4,2.3) {$\scriptstyle \left(J_{x_2}^2\right)_1$};

\draw[->] (0,2.5) -- (0.625,2.25); \draw  (0.625,2.25)--(1.25,2); \node (21) at (0.475,2.1) {$\scriptstyle x_1$};
\draw[->] (2,1) -- (2,0.5); \draw  (2,0.5)--(2,0); \node (22) at (1.75,0.5) {$\scriptstyle x_1$};
\draw[->] (1.25,-1) -- (0.625,-1.25); \draw  (0.625,-1.25)--(0,-1.5); \node (23) at (0.575,-1.05) {$\scriptstyle x_2$};
\draw (-1.25,-1)--(-1.625,-0.5); \draw[->] (-2,0)-- (-1.625,-0.5); \node (24) at (-1.45,-0.275) {$\scriptstyle x_1$};
\draw (-2,1)--(-1.625,1.5); \draw[->]  (-1.25,2)--(-1.625,1.5); \node (25) at (-1.25,1.45) {$\scriptstyle x_2$};

\filldraw (7,2.5) circle (2pt); \node (11) at (7,2.825) {$\scriptstyle \left(J_{x_1}^1\right)_2$};
\filldraw (8.25,2) circle (2pt); \node (12) at (8.585,2.335) {$\scriptstyle \left(I_{x_1}^1\right)_2$};
\filldraw (9,1) circle (2pt); \node (13) at (9.5,1.125) {$\scriptstyle \left(J_{x_1}^2\right)_1$};
\filldraw (9,0) circle (2pt); \node (14) at (9.5,-0.125) {$\scriptstyle \left(I_{x_1}^2\right)_2$};
\filldraw (8.25,-1) circle (2pt); \node (15) at (8.65,-1.3) {$\scriptstyle \left(J_{x_2}^1\right)_2$};
\filldraw (7,-1.5) circle (2pt); \node (16) at (6.875,-1.9) {$\scriptstyle \left(I_{x_2}^1\right)_2$};
\filldraw (5.75,-1) circle (2pt); \node (17) at (5.5,-1.375) {$\scriptstyle \left(I_{x_1}^3\right)_2$};
\filldraw (5,0) circle (2pt); \node (18) at (4.5,-0.125) {$\scriptstyle \left(J_{x_1}^3\right)_2$};
\filldraw (5,1) circle (2pt); \node (19) at (4.5,1.125) {$\scriptstyle \left(I_{x_2}^2\right)_2$};
\filldraw (5.75,2) circle (2pt); \node (20) at (5.6,2.3) {$\scriptstyle \left(J_{x_2}^2\right)_2$};

\draw[->] (7,2.5) -- (7.625,2.25); \draw  (7.625,2.25)--(8.25,2); \node (26) at (7.475,2.1) {$\scriptstyle x_1$};
\draw[->] (9,1) -- (9,0.5); \draw  (9,0.5)--(9,0); \node (27) at (8.75,0.5) {$\scriptstyle x_1$};
\draw[->] (8.25,-1) -- (7.625,-1.25); \draw  (7.625,-1.25)--(7,-1.5); \node (28) at (7.575,-1.05) {$\scriptstyle x_2$};
\draw (5.75,-1) -- (5.375,-0.5); \draw[->] (5,0)-- (5.375,-0.5); \node (29) at (5.55,-0.275) {$\scriptstyle x_1$};
\draw (5,1) -- (5.375,1.5); \draw[->] (5.75,2)-- (5.375,1.5); \node (30) at (5.75,1.45) {$\scriptstyle x_2$};    \end{tikzpicture}

\caption{\label{fig: construction of graph with vertices, directed edges, sigma and tau edges}We
continue the construction from Figures \ref{fig: constructing G step 1 - vertices and directed edges only}
and \ref{fig: consturcting graph with vertices, directed edges and sigma edges}.
We have added in the $\tau_{x_{1}}$--edges (in dark blue) and the
$\tau_{x_{2}}$--edges (in light blue). In this example, $\tau_{x_{1}}=\big\{\{1,3,5\}\ \{2,4,6\}\big\}$
and $\tau_{x_{2}}=\big\{\{1,4\}\ \{2,3\}\big\}$.}

\begin{tikzpicture}[main/.style = draw, circle]

\draw [red]  (0,2.5) to (2,1); \draw [red] (2,1) to (5,0); \draw [red] (0,2.5) [out=30] to (5,0);
\draw [red] (7,2.5) to (9,1); \draw [red] (-2,0) .. controls (-4.5,4) and (8,6) ..  (9,1); \draw[red] (-2,0) .. controls (4.5, 3)..  (7,2.5);

\draw [violet] (-1.25,2) to (1.25,-1); \draw[violet] (5.75,2) to (8.25,-1);

\draw [blue] (1.25,2) to (2,0); \draw [blue] (2,0) to (-1.25,-1); \draw [blue] (-1.25,-1) to (1.25,2);
\draw [blue] (8.25,2) to (9,0); \draw [blue] (9,0) to (5.75,-1); \draw [blue] (5.75,-1) to (8.25,2);

\draw [cyan] (0,-1.5) .. controls (2,-2.75) and (3.3,0.5) .. (5,1); \draw[cyan] (-2,1) .. controls (-3.5,-1.75) and (3, -4) .. (7,-1.5);

\draw [green] (2,0) to (1.25,-1); \draw [green] (9,0) to (8.25,-1);
\draw [green] (0,-1.5) .. controls (0.5,-3) and (3,-2) .. (5.75,-1);
\draw [green] (5,0) to (5,1);
\draw [green] (0,2.5) .. controls (3,3.3) .. (5.75,2); \draw [green] (-1.25,2) .. controls (0,4.5) and (5.75,4) .. (7,2.5);

\filldraw (0,2.5) circle (2pt); \node (1) at (0,2.825) {$\scriptstyle \left(J_{x_1}^1\right)_1$};
\filldraw (1.25,2) circle (2pt); \node (2) at (1.585,2.335) {$\scriptstyle \left(I_{x_1}^1\right)_1$};
\filldraw (2,1) circle (2pt); \node (3) at (2.5,1.125) {$\scriptstyle \left(J_{x_1}^2\right)_1$};
\filldraw (2,0) circle (2pt); \node (4) at (2.5,-0.125) {$\scriptstyle \left(I_{x_1}^2\right)_1$};
\filldraw (1.25,-1) circle (2pt); \node (5) at (1.65,-1.3) {$\scriptstyle \left(J_{x_2}^1\right)_1$};
\filldraw (0,-1.5) circle (2pt); \node (6) at (-0.125,-1.9) {$\scriptstyle \left(I_{x_2}^1\right)_1$};
\filldraw (-1.25,-1) circle (2pt); \node (7) at (-1.5,-1.375) {$\scriptstyle \left(I_{x_1}^3\right)_1$};
\filldraw (-2,0) circle (2pt); \node (8) at (-2.5,-0.125) {$\scriptstyle \left(J_{x_1}^3\right)_1$};
\filldraw (-2,1) circle (2pt); \node (9) at (-2.5,1.125) {$\scriptstyle \left(I_{x_2}^2\right)_1$};
\filldraw (-1.25,2) circle (2pt); \node (10) at (-1.4,2.3) {$\scriptstyle \left(J_{x_2}^2\right)_1$};

\draw[->] (0,2.5) -- (0.625,2.25); \draw  (0.625,2.25)--(1.25,2); \node (21) at (0.475,2.1) {$\scriptstyle x_1$};
\draw[->] (2,1) -- (2,0.5); \draw  (2,0.5)--(2,0); \node (22) at (1.75,0.5) {$\scriptstyle x_1$};
\draw[->] (1.25,-1) -- (0.625,-1.25); \draw  (0.625,-1.25)--(0,-1.5); \node (23) at (0.575,-1.05) {$\scriptstyle x_2$};
\draw (-1.25,-1)--(-1.625,-0.5); \draw[->] (-2,0)-- (-1.625,-0.5); \node (24) at (-1.45,-0.275) {$\scriptstyle x_1$};
\draw (-2,1)--(-1.625,1.5); \draw[->]  (-1.25,2)--(-1.625,1.5); \node (25) at (-1.25,1.45) {$\scriptstyle x_2$};

\filldraw (7,2.5) circle (2pt); \node (11) at (7,2.825) {$\scriptstyle \left(J_{x_1}^1\right)_2$};
\filldraw (8.25,2) circle (2pt); \node (12) at (8.585,2.335) {$\scriptstyle \left(I_{x_1}^1\right)_2$};
\filldraw (9,1) circle (2pt); \node (13) at (9.5,1.125) {$\scriptstyle \left(J_{x_1}^2\right)_1$};
\filldraw (9,0) circle (2pt); \node (14) at (9.5,-0.125) {$\scriptstyle \left(I_{x_1}^2\right)_2$};
\filldraw (8.25,-1) circle (2pt); \node (15) at (8.65,-1.3) {$\scriptstyle \left(J_{x_2}^1\right)_2$};
\filldraw (7,-1.5) circle (2pt); \node (16) at (6.875,-1.9) {$\scriptstyle \left(I_{x_2}^1\right)_2$};
\filldraw (5.75,-1) circle (2pt); \node (17) at (5.5,-1.375) {$\scriptstyle \left(I_{x_1}^3\right)_2$};
\filldraw (5,0) circle (2pt); \node (18) at (4.5,-0.125) {$\scriptstyle \left(J_{x_1}^3\right)_2$};
\filldraw (5,1) circle (2pt); \node (19) at (4.5,1.125) {$\scriptstyle \left(I_{x_2}^2\right)_2$};
\filldraw (5.75,2) circle (2pt); \node (20) at (5.6,2.3) {$\scriptstyle \left(J_{x_2}^2\right)_2$};

\draw[->] (7,2.5) -- (7.625,2.25); \draw  (7.625,2.25)--(8.25,2); \node (26) at (7.475,2.1) {$\scriptstyle x_1$};
\draw[->] (9,1) -- (9,0.5); \draw  (9,0.5)--(9,0); \node (27) at (8.75,0.5) {$\scriptstyle x_1$};
\draw[->] (8.25,-1) -- (7.625,-1.25); \draw  (7.625,-1.25)--(7,-1.5); \node (28) at (7.575,-1.05) {$\scriptstyle x_2$};
\draw (5.75,-1) -- (5.375,-0.5); \draw[->] (5,0)-- (5.375,-0.5); \node (29) at (5.55,-0.275) {$\scriptstyle x_1$};
\draw (5,1) -- (5.375,1.5); \draw[->] (5.75,2)-- (5.375,1.5); \node (30) at (5.75,1.45) {$\scriptstyle x_2$};\end{tikzpicture}

\caption{\label{fig: final construction of gsigma}This depicts $\protect\gsigma,$
where $\sigma_{f}$ and $\tau_{f}$ are as described in Figures \ref{fig: consturcting graph with vertices, directed edges and sigma edges}
and \ref{fig: construction of graph with vertices, directed edges, sigma and tau edges}
and the $\pi_{i}$ are as follows: $\pi_{1}=\big\{\{1\},\ \{2\},\ \{3\}\ \{4\}\big\},$
$\pi_{2}=\big\{\{1,3\},\ \{2,4\}\big\}$, $\pi_{3}=\big\{\{1,4\},\ \{2\},\ \{3\}\big\}$,
$\pi_{4}=\big\{\{1\},\ \{3\},\ \{2,4\}\big\}$ and $\pi_{5}=\big\{\{1,4\},\ \{2,3\}\big\}$.}
\end{figure}

\subsection{\label{subsec:Obtaining-the-bound}Obtaining the bound}

With all $\pi_{1},\dots,\pi_{l(w)}$ corresponding to permutations
(i.e. with $\sum_{i}\mathrm{del}(\pi_{i})=0$), we can apply \cite[Theorem 1.2]{LouderWilton}
after (\ref{eq: expected character wth gamsig TILDE}) below to obtain
the bound of $n^{-k}.$ For the other cases, we need to prove a variant
of this theorem that applies in our case. This section is dedicated
to this task. 

\subsubsection{Stackings}

Given graph morphisms $\rho_{1}:\Gamma_{1}\to G$ and $\rho_{2}:\Gamma_{2}\to G$,
the fibre product $\Gamma_{1}\times_{G}\Gamma_{2}$ is the graph with
vertex set 
\[
V\left(\Gamma_{1}\times_{G}\Gamma_{2}\right)=\big\{(v_{1},v_{2})\in V\left(\Gamma_{1}\right)\times V\left(\Gamma_{2}\right):\ \rho_{1}\left(v_{1}\right)=\rho_{2}\left(v_{2}\right)\big\}
\]
and edge set 
\[
E\left(\Gamma_{1}\times_{G}\Gamma_{2}\right)=\big\{(e_{1},e_{2})\in E\left(\Gamma_{1}\right)\times E\left(\Gamma_{2}\right):\ \rho_{1}\left(e_{1}\right)=\rho_{2}\left(e_{2}\right)\big\},
\]
where $t\left(e_{1},e_{2}\right)=\left(t\left(e_{1}\right),t\left(e_{2}\right)\right)$
and $h\left(e_{1},e_{2}\right)=\left(h\left(e_{1}\right),h\left(e_{2}\right)\right).$ 

Louder and Wilton \cite{LouderWilton} first developed the notion
of a stacking of a graph immersion and we adapt their definitions
slightly to suit our case.
\begin{defn}
Let $\Gamma$ be a finite graph and $\mathbb{S}$ a $1$--complex
with an immersion $\Lambda:\mathbb{S}\to\Gamma.$ A \emph{stacking
}is an embedding $\hat{\Lambda}:\mathbb{S}\to\Gamma\times\mathbb{R}$
such that $\pi\hat{\Lambda}=\Lambda,$ where $\pi:\Gamma\times\mathbb{R}\to\Gamma$
is the trivial $\mathbb{R}$--bundle. 
\end{defn}

Let $\eta:\Gamma\times\mathbb{R}\to\mathbb{R}$ be the projection
to $\mathbb{R}.$ 
\begin{defn}
Given a stacking $\hat{\Lambda}:\mathbb{S}\to\Gamma\times\mathbb{R}$
of an immersion $\Lambda:\mathbb{S}\to\Gamma,$ define 
\[
\mathcal{A}_{\hat{\Lambda}}\overset{\mathrm{def}}{=}\big\{ x\in\mathbb{S}:\ \forall y\neq x,\ \mathrm{if}\ \Lambda(x)=\Lambda(y)\ \mathrm{then}\ \eta\left(\hat{\Lambda}(x)\right)>\eta\left(\hat{\Lambda}(y)\right)\big\}
\]
and 
\[
\mathcal{B}_{\hat{\Lambda}}\overset{\mathrm{def}}{=}\big\{ x\in\mathbb{S}:\ \forall y\neq x,\ \mathrm{if}\ \Lambda(x)=\Lambda(y)\ \mathrm{then}\ \eta\left(\hat{\Lambda}(x)\right)<\eta\left(\hat{\Lambda}(y)\right)\big\}.
\]
\end{defn}

\subsubsection{Bounding expected character}

By Remark \ref{rem: main bound holds for proper powers}, we only
need to prove the bound in Theorem \ref{thm: word map main theorem}
in the case where $w$ is not a proper power. \emph{From now, assume
that $w$ is not a proper power. }

For each $\sigma_{f},\tau_{f},\pi_{i},$ construct the graph $\gsigma$
from Section \ref{subsec:Graphical-Interpretation}, consisting of
$kl(w)$ disjoint, directed $f$ edges organised into $k$ $w$--loops
and then marking on the blocks of the partitions $\sigma_{f},\tau_{f},\pi_{i}$
using undirected edges. 

We construct a new graph $\gamsig$ by: 
\begin{itemize}
\item gluing together any vertices that are connected by a $\sigma_{f},\tau_{f}$
or $\pi_{i}$--edge (and then deleting the $\sigma_{f},\tau_{f}$
or $\pi_{i}$--edges),
\item if $\sigma_{f}$ and $\tau_{f}$ connect both the initial and terminal
vertices of some collection of $f$--edges, we merge these into a
single $f$--edge, see Figure \ref{fig: constructing tilde gamsig and GAMMA}. 
\end{itemize}
\begin{defn}
We will write $\hatgsigma$ for the subgraph of $\gsigma$ consisting
of every vertex and \emph{only the undirected edges}. 
\end{defn}

\begin{lem}
Given $\sigma_{f},\tau_{f}\in\refinedpart$ and $\pi_{i}\in\skpart,$
\[
\sum_{f}|\sigma_{f}\wedge\tau_{f}|=\Big|E\left(\gamsig\right)\Big|.
\]
 
\end{lem}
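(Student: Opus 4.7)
The plan is to count the edges of $\tilde{\Gamma}$ letter by letter, showing that for each $f$ the surviving $f$--edges are in bijection with the blocks of the meet partition $\sigma_f \wedge \tau_f$. First I would observe that every undirected edge of $\gsigma$ --- any $\sigma_f$--, $\tau_f$--, or $\pi_i$--edge --- has its two endpoints identified by the vertex--gluing step in the construction of $\tilde{\Gamma}$. Hence no undirected edge survives as an edge of $\tilde{\Gamma}$, and $E(\tilde{\Gamma})$ consists solely of (merged) directed $f$--edges, which can be counted one letter at a time.

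Fix $f \in \{x_1,\dots,x_r\}$. The $|w|_f k$ directed $f$--edges of $\gsigma$ are naturally indexed by pairs $(j,i) \in [|w|_f] \times [k]$, and via the labelling $(j,i) \mapsto (j-1)k + i$ this is exactly the set $[|w|_f k]$ on which $\sigma_f$ and $\tau_f$ are partitions. By construction, two $f$--edges $(j_1,i_1)$ and $(j_2,i_2)$ have their initial vertices $(J_f^{j_1})_{i_1}$ and $(J_f^{j_2})_{i_2}$ joined by a $\sigma_f$--edge iff $(j_1,i_1) \sim (j_2,i_2)$ in $\sigma_f$, and analogously for the terminal vertices via $\tau_f$. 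Step~2 of the construction of $\tilde{\Gamma}$ then merges these two $f$--edges precisely when both conditions hold, i.e.\ iff $(j_1,i_1) \sim (j_2,i_2)$ in $\sigma_f \wedge \tau_f$. Consequently, the number of distinct $f$--edges of $\tilde{\Gamma}$ equals the number of blocks $|\sigma_f \wedge \tau_f|$, and summing over $f$ yields the claimed identity.

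The key subtlety to pin down is the interpretation of step~2: merging of $f$--edges is triggered only by direct $\sigma_f$-- and $\tau_f$--identifications of endpoint pairs, not by longer chains through $\pi_i$--edges or through $\sigma_{f'}$/$\tau_{f'}$--edges for $f' \neq f$ (which can certainly identify further vertices but do not affect the edge count for letter $f$). This is precisely why the right--hand side of the lemma is independent of the $\pi_i$ even though $\tilde{\Gamma}$ is constructed from them, and verifying this reading of the construction is the only mild obstacle in an otherwise direct bookkeeping argument.
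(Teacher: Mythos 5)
Your proof is correct and follows essentially the same route as the paper: undirected edges are contracted away in the gluing, and for each letter $f$ the surviving $f$--edges of $\tilde{\Gamma}\left(\sigma_{f},\tau_{f},\pi_{i}\right)$ are in bijection with the blocks of $\sigma_{f}\wedge\tau_{f}$, since step two merges a collection of $f$--edges exactly when $\sigma_{f}$ identifies their initial vertices and $\tau_{f}$ their terminal vertices. Your remark that merging is triggered only by $\sigma_{f}$-- and $\tau_{f}$--identifications (not by chains through $\pi_{i}$-- or other letters' edges, which merely create parallel edges in the multigraph) is the correct reading of the construction and matches the paper's implicit use of it.
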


\begin{proof}
Every block of $\sigma_{f}\wedge\tau_{f}$ of size $p$ corresponds
to some collection of $f$--edges in $\gsigma$ of size $p$, whose
initial and terminal vertices have been glued together and whose edges
have been merged in the construction of $\gamsig.$ Every $f$--edge
in $\gamsig$ then corresponds to a block of $\sigma_{f}\wedge\tau_{f}.$ 
\end{proof}
\begin{lem}
Given $\sigma_{f},\tau_{f}\in\refinedpart$ and $\pi_{i}\in\skpart,$
\[
\mathcal{N}(\sigma_{f},\tau_{f},\pi_{i})\ll n^{\left|V\left(\gamsig\right)\right|}.
\]
\end{lem}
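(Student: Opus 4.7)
The plan is to interpret $\mathcal{N}(\sigma_f,\tau_f,\pi_i)$ directly as the number of labellings of the vertices of $\Gamma(\sigma_f,\tau_f,\pi_i)$ by elements of $[n]$ that are compatible with the undirected edges, and then observe that such labellings descend to labellings of the vertices of $\tgamsig$.

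More precisely, each of the $2kl(w)$ vertices of $\gsigma$ corresponds to a single index entry $\left(I_f^j\right)_s$ or $\left(J_f^j\right)_s$ appearing in the multi--indices counted by $\mathcal{N}$, so a choice of multi--indices $I_f^1,\dots,I_f^{|w|_f},J_f^1,\dots,J_f^{|w|_f}$ is nothing but a function from the vertex set $V(\gsigma)$ to $[n]$. By construction, an undirected edge (be it a $\sigma_f$--, $\tau_f$--, or $\pi_i$--edge) is drawn between two vertices precisely when the corresponding index entries are forced to be equal in order for the defining conditions in Definition~\ref{def: N(sigma, tau, pi)} to be satisfied. Hence the conditions $\sigma_f\leftrightarrow J_f^\Sigma$, $\tau_f\leftrightarrow I_f^\Sigma$, and $\bigl\langle p_{\pi_i}(e_{\cdot}),e_{\cdot}\bigr\rangle=1$ all hold if and only if the labelling is constant on each connected component of $\hatgsigma$.

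Now the construction of $\tgamsig$ collapses each connected component of $\hatgsigma$ to a single vertex (the second step of the construction merges $f$--edges but does not alter the vertex set), so a labelling of $V(\gsigma)$ constant on undirected--edge components is exactly a labelling of $V(\tgamsig)$ by elements of $[n]$. Dropping the distinctness requirement on the indices only enlarges the count, so
\[
\mathcal{N}(\sigma_f,\tau_f,\pi_i)\leq n^{|V(\tgamsig)|},
\]
which yields the lemma. The argument is essentially bookkeeping, and no step is a genuine obstacle; the only thing to verify carefully is that the two simplification moves in building $\tgamsig$ (gluing vertices, then merging parallel $f$--edges between identified vertices) preserve the vertex count after the first step, so that $|V(\tgamsig)|$ really equals the number of connected components of $\hatgsigma$.
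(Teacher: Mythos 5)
Your argument is correct and follows essentially the same route as the paper: vertices of $\tgamsig$ correspond to connected components of $\hatgsigma$, valid choices of multi--indices are labellings constant on those components, and dropping the distinctness constraint gives the upper bound $n^{|V(\tgamsig)|}$. The bookkeeping point you flag (that merging parallel $f$--edges does not change the vertex set) is exactly the observation the paper relies on.
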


\begin{proof}
Each vertex of $\gamsig$ corresponds to a connected component of
$\hatgsigma$. Any collection of indices must be equal if their corresponding
vertices are in the same connected component of $\hatgsigma$. Clearly,
if there were no more restrictions, we would have 
\[
\mathcal{N}(\sigma_{f},\tau_{f},\pi_{i})=n^{\left|V\left(\gamsig\right)\right|}.
\]
 The lemma follows since we have the additional restriction that within
each multi--index, each index must be distinct. 
\end{proof}
So (\ref{eq: expected character with pi, weingarten and N}) becomes
\begin{equation}
\mathbb{E}_{w}\left[\chi^{\lambda^{+}(n)}\right]\ll_{k,l(w)}\sum_{\sigma_{f},\tau_{f}}^{\star}\sum_{\pi_{1},\dots,\pi_{l(w)}}^{\leq S_{k}}n^{-\sum_{i}\mathrm{del}(\pi_{i})}n^{\chi\left(\gamsig\right)}.\label{eq: expected character wth gamsig TILDE}
\end{equation}

\begin{figure}[H]
\tikzset{->-/.style={decoration={   markings,   mark=at position .5 with {\arrow{>}}},postaction={decorate}}} \begin{tikzpicture}[main/.style = draw, circle]

\hspace{2.9cm}
\filldraw (0,0) circle (2pt);  \filldraw (4,0) circle (2pt);  \filldraw (2,-1.5) circle (2pt); \node at (2,-1.8) {$\scriptstyle \left[I_{x_{2}}^{2}\right]_{1}=\scriptstyle \left[I_{x_{2}}^{1}\right]_{2}$};
\draw[->-] (0,0) ..controls (1.5,1) and (2.5,1) .. (4,0); \node at (2,1) {$\scriptstyle x_1 $}; \draw[->-] (4,0)-- (0,0); \node at (2,0.2) {$\scriptstyle x_2 $}; \draw[->-] (4,0) ..controls (2.5,-0.5) and (1.5,-0.5) .. (0,0); \node at (2,-0.65) {$\scriptstyle x_1 $}; \draw[->-] (4,0) ..controls (4,-0.3) and (2.3,-1.5) .. (2,-1.5); \node at (3.3,-1.1) {$\scriptstyle x_2 $}; \draw[->-] (0,0) ..controls (0,-0.3) and (1.7,-1.5) .. (2,-1.5); \node at (0.7,-1.1) {$\scriptstyle x_2 $};
\draw[->-] (4,0) ..controls (5.5,1.5) and (5.5,-1.5) .. (4,0); \node at (5.4,0) {$\scriptstyle x_1 $}; \draw[->-] (0,0) ..controls (-2,0) and (0,2) .. (0,0); \node at (-0.9,0.9) {$\scriptstyle x_2 $}; \draw[->-] (0,0) ..controls (-2,0) and (0,-2) .. (0,0); \node at (-0.9,-0.9) {$\scriptstyle x_1 $};

\end{tikzpicture}

\caption{\label{fig: constructing tilde gamsig and GAMMA}Above we show how
to construct $\protect\gamsig$ from the graph $\protect\gsigma$
constructed in Figure \ref{fig: final construction of gsigma}. We
have labeled one vertex to show which vertices of $\protect\gsigma$
have been glued together in the construction.}
\end{figure}
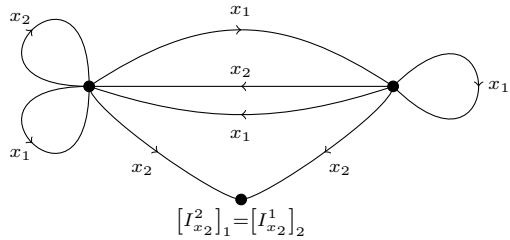

\begin{rem}
It is worth noting here why the most obvious approach to obtaining
the required bound does not work. We can `complete' each $\pi_{i}$
to a permutation $\hat{\pi}_{i}$ by gluing together vertices of $\gamsig$
according to the additional identifications dictated by the edges
that must be added to $\pi_{i}$ to construct $\hat{\pi}_{i}.$ Then,
one may observe that 
\[
n^{-\sum_{i}\mathrm{del}(\pi_{i})}n^{\chi\left(\gamsig\right)}\ll n^{\chi\left(\Gamma\left(\sigma_{f},\tau_{f},\hat{\pi}_{i}\right)\right)}
\]
and seek to apply \cite[Theorem 1.2]{LouderWilton} directly. Unfortunately,
in gluing the vertices of $\gamsig$ together, one may lose the distinctness
property that no two vertices of $\gsigma$ labeled, say, $\left(J_{f}^{i}\right)_{j_{1}}$
and $\left(J_{f}^{i}\right)_{j_{2}}$ with $j_{1}\neq j_{2}$, can
be glued together, which would then weaken the bound achieved by applying
\cite[Theorem 1.2]{LouderWilton}. Therefore, we proceed with our
extension of their result. 
\end{rem}

Define $\mathcal{B}_{r}\overset{\mathrm{def}}{=}\bigvee_{i=1}^{r}S_{i}^{1},$
the bouquet of $r$ oriented circles, labeled $x_{1},\dots,x_{r}$
and with wedge point labeled $o$. Define $W$ to be the graph consisting
of a single cycle of oriented $f$--labeled edges that, when traversed
in an appropriate direction from an appropriate vertex, reads out
the word $w.$ Then $W$ comes equipped with an obvious primitive
(since $w$ is not a proper power) immersion 
\[
\Lambda:W\to\mathcal{B}_{r}.
\]
 By \cite[Lemma 3.4]{LouderWilton}, this immersion has a stacking
\[
\hat{\Lambda}:W\to\mathcal{B}_{r}\times\mathbb{R}.
\]
 For each $\sigma_{f},\tau_{f},\pi_{i},$ write $\Gamma=\gamsig.$
We have a map 
\[
\rho:\Gamma\to\mathcal{B}_{r}
\]
and we can form the fibre product graph 
\[
\Gamma\times_{\mathcal{B}_{r}}W.
\]

We denote by $\mathbb{S}$ the components of the fibre product graph
defined by following the paths in $\gsigma$ which alternate between
$f$--edges and $\pi_{i}$--edges. This defines some partition of
$w^{k}.$ The connected components of $\mathbb{S}$ are either \emph{circles
}that read out $w^{k_{i}}$ or \emph{pieces, }which consist of two
closed endpoints and a chain of vertices of valence two in between
them. Each piece reads out some subword of $w^{k}.$ The concatenation
of the words spelled out by all the connected components of $\mathbb{S}$
is exactly $w^{k}$ and the number of pieces of $\mathbb{S}$ is exactly
$\sum_{i}\mathrm{del}\left(\pi_{i}\right).$ 

We have an immersion 
\[
\Lambda':\mathbb{S}\to\Gamma
\]
and an immersion 
\[
\delta:\mathbb{S}\to W.
\]
By the same argument as \cite[Lemma 2.5]{LouderWilton}, we have a
stacking,
\[
\hat{\Lambda}':\mathbb{S}\to\Gamma\times\mathbb{R}.
\]

\begin{lem}
\label{lem:lambda' is not vertex reducible}Every vertex in $\Gamma$
is covered at least twice by $\Lambda':\mathbb{S}\to\Gamma.$ 
\end{lem}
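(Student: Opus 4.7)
The plan is to count the preimages of a given $z$--type vertex under $\Lambda'$ in terms of the block structure of $\sigma_f$ or $\tau_f$, and then to invoke the defining property of $\refinedpart$ that every block has size at least $2$.

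By the symmetry between $f_s$ and $f_e$ (which use $\sigma_f$ and $\tau_f$ respectively), I will treat only a $z$--type vertex of the form $v = f_s$. By the construction of $\Gamma$ from $\tgamsig$, the vertex $v$ is attached by an unlabelled edge to exactly one $o$--type vertex $u$, which itself corresponds to a specific connected component $C$ of $\hatgsigma$. Every $f$--labelled edge of $\Gamma$ that emanates from $u$ passes through $v$, and each such $f$--edge is the image of one or more original directed $f$--edges of $\gsigma$ whose initial vertex label $\left(J_{f}^{j}\right)_{i}$ lies in $C$.

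The main step will be to show that $\left|(\Lambda')^{-1}(v)\right|$ equals the number of labels of the form $\left(J_{f}^{j}\right)_{i}$ in $C$. To do this I will use that the components of $\bar{\mathbb{S}}$ collectively spell out $w^{k}$ by walking in $\gsigma$ along alternating $f$--edges and $\pi_{i}$--edges, so each of the $k|w|_{f}$ many directed $f$--edges of $\gsigma$ is traversed exactly once, and each traversal of an $f$--edge with initial label in $C$ enters $v$ via the connecting unlabelled edge. Passing from $\bar{\mathbb{S}}$ to $\mathbb{S}$ deletes only the endpoints of pieces, and by the observation made before the definition of $\mathbb{S}$, these endpoints map only to $o$--type vertices, so no preimages of $v$ are lost.

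Once this identification is in place, the conclusion is immediate: if $C$ contains any label $\left(J_{f}^{j}\right)_{i}$, then because $\sigma_{f}\in\refinedpart$ contains no singletons, this label is joined via a $\sigma_{f}$--edge (which is an edge of $\hatgsigma$) to a second label $\left(J_{f}^{j'}\right)_{i'}$, which therefore also lies in $C$. Thus $C$ contains at least two such labels, and $v$ has at least two preimages in $\mathbb{S}$. I expect the main obstacle to be the careful bookkeeping of the bijection between preimages of $v$ in $\mathbb{S}$ and $f$--edges of $\gsigma$ with initial label in $C$, since one has to track how the successive mergings in $\tgamsig$ and $\Gamma$ interact with the fibre--product structure defining $\bar{\mathbb{S}}$; the key mechanism ensuring this works is precisely that undirected edges of $\gsigma$ are \emph{never} traversed by walks, but only serve to identify vertices.
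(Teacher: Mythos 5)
Your proposal is correct and takes essentially the same route as the paper: the paper's proof of this lemma is precisely the one--line observation that, for every $f$, neither $\sigma_{f}$ nor $\tau_{f}$ has singletons, and your identification of the preimages of an $f_{s}$ (resp.\ $f_{e}$) vertex with the $J_{f}$--labels (resp.\ $I_{f}$--labels) in the corresponding connected component of $\hatgsigma$ is exactly the bookkeeping implicit in that observation.
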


\begin{proof}
Every vertex $v$ of $\Gamma$ is constructed by gluing together \emph{at
least two }vertices in $\gsigma,$ and $v$ is covered by $\Lambda'$
exactly once for each vertex in $\gsigma$ that has been glued in
the construction of $v$. 
\end{proof}
The connected components of $\mathbb{S}$ are then one of the following:
\begin{itemize}
\item a circle
\item a \emph{closed arc}: a connected and simply connected union of vertices
and interiors of edges, with both ends closed (i.e. a piece).
\end{itemize}
If we also define the following:
\begin{itemize}
\item an \emph{open arc}: a connected and simply connected union of vertices
and interiors of edges, with both ends open
\item a \emph{half--open arc}: a connected and simply connected union of
vertices and interiors of edges, with one end open and one end closed, 
\end{itemize}
then the following lemma is immediate.\footnote{We are especially indebted to Noam Ta Shma for pointing out an error
in our original argument for proving the extension of Wise's $w$--cycles
conjecture and for his feedback on the correction presented from this
point onward.}
\begin{lem}
Every connected component of $\Astack$ or $\B$ is either a circle,
an open arc, a half--open arc or a closed arc. 
\end{lem}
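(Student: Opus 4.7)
The plan is to reduce the statement to the classification of connected $1$-manifolds without boundary by showing that $\Astack$ and $\B$ are open subsets of $\mathbb{S}$. First I would check that $\mathbb{S}$ itself is a $1$-manifold: by construction, every vertex of $\mathbb{S}$ has valence $2$, because pieces of $\bar{\mathbb{S}}$ have valence-$2$ interior vertices together with the valence-$1$ endpoints that are removed in passing to $\mathbb{S}$, and circle components of $\bar{\mathbb{S}}$ only contain valence-$2$ vertices. Hence $\mathbb{S}$ is a $1$-manifold without boundary, and the same is true of any open subset; the connected components of such a space are homeomorphic either to $S^1$ (a circle) or to $\mathbb{R}$ (an open arc in the sense of the paper), so openness of $\Astack$ and $\B$ suffices.

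Next I would establish openness of $\Astack$ (the case of $\B$ being symmetric). Fix $x \in \Astack$ and let $p = \Lambda'(x)$. Since $\bar{\mathbb{S}}$ is a finite graph and $\Lambda'$ is a simplicial immersion, the fibre $(\Lambda')^{-1}(p) \cap \mathbb{S} = \{x = x_1, \ldots, x_m\}$ is finite. By local injectivity of an immersion, one can choose pairwise disjoint open neighborhoods $U_j \ni x_j$ and a common neighborhood $V$ of $p$ such that each $\Lambda'|_{U_j} : U_j \to V$ is a homeomorphism onto its image, with inverse $\varphi_j$. For every $j \geq 2$, the continuous function
\[
q \,\longmapsto\, \eta\bigl(\hat\Lambda'(\varphi_1(q))\bigr) - \eta\bigl(\hat\Lambda'(\varphi_j(q))\bigr)
\]
is strictly positive at $q = p$ by the assumption $x \in \Astack$, hence on a smaller neighborhood of $p$. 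Intersecting over $j$ and pulling back through $\varphi_1$ produces an open neighborhood of $x$ in $\mathbb{S}$ contained in $\Astack$.

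The main obstacle is verifying that the finite list $x_1, \ldots, x_m$ actually captures all preimages of nearby points $p' \in V$ that lie in $\mathbb{S}$: in principle, new preimages could appear from sheets of $\bar{\mathbb{S}}$ whose only encounter with the fibre over $p$ is at a ghost endpoint in $\bar{\mathbb{S}} \setminus \mathbb{S}$. I would handle this by shrinking $V$ so that its total $\bar{\Lambda}'$-preimage in $\bar{\mathbb{S}}$ decomposes into disjoint arcs around each point of $(\bar{\Lambda}')^{-1}(p)$, then enlarging each $U_j$ (and, where necessary, adjoining the half-open arcs adjacent to ghost endpoints) so that every preimage of every $p' \in V$ lying in $\mathbb{S}$ is accounted for in some $U_j$; the stacking $\hat{\Lambda}'$ obtained from the Louder--Wilton construction behaves well enough near such endpoints for this comparison to go through. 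With openness secured, the classification of connected $1$-manifolds without boundary concludes the proof.
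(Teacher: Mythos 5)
Your overall route---show that $\Astack$ and $\B$ are open subsets of the $1$--manifold $\mathbb{S}$ and then invoke the classification of connected $1$--manifolds without boundary---is the natural way to flesh out the paper's one--word proof (``Immediate''), and the easy half of it is fine: $\mathbb{S}$ has only valence--two vertices, fibres of $\Lambda'$ are finite, and strict height inequalities persist near any point all of whose nearby preimages limit onto points of $\mathbb{S}$ in the fibre over $\Lambda'(x)$. But the step you yourself flag as the ``main obstacle'' is not an implementation detail; in this setting it is the entire content of the openness claim, and your resolution of it is an assertion rather than an argument. If $x\in\Astack$ lies over an $o$--type vertex $v$ of $\Gamma$ and some piece of $\bar{\mathbb{S}}$ has a removed endpoint over $v$ whose adjacent half--open edge maps to the same edge of $\Gamma$ as an edge incident to $x$, then the defining condition of $\Astack$ gives no comparison at all between the height of $x$ and the heights of the nearby preimages $y'$ on that half--open edge: the removed endpoint is not a point of $\mathbb{S}$, so it is never quantified over, and nothing forces $\eta\bigl(\hat{\Lambda}'(y')\bigr)<\eta\bigl(\hat{\Lambda}'(x')\bigr)$ for $x'$ near $x$. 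The sentence ``the stacking behaves well enough near such endpoints for this comparison to go through'' is precisely what would have to be proved---for instance by specifying how $\hat{\Lambda}'$ is obtained from the stacking $\hat{\Lambda}$ of $W$ and controlling the limiting heights along half--open edges at removed endpoints---and your sketch contains no such control. As written, openness of $\Astack$ (and $\B$) at vertices of $\mathbb{S}$ lying over $o$--type vertices is unproved, and this is exactly the case that distinguishes the present situation from the closed setting of Louder--Wilton.

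A second, smaller gap: even granting openness, your argument only yields components homeomorphic to a circle or to $\mathbb{R}$, whereas the paper's notion of ``open arc'' is combinatorial---a connected, simply connected union of vertices and interiors of \emph{whole} edges with both endpoints open---and this combinatorial form is what the subsequent word--length and open--end counts rely on. To rule out a component of $\Astack$ terminating in the middle of an edge you need the observation that, since $\hat{\Lambda}'$ is an embedding with $\pi\hat{\Lambda}'=\Lambda'$, the height functions of distinct edges of $\mathbb{S}$ lying over a common edge of $\Gamma$ have disjoint graphs and are therefore strictly ordered along the entire open edge, so membership in $\Astack$ (resp.\ $\B$) is constant on the interior of every edge of $\mathbb{S}$. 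Your proposal uses injectivity of $\hat{\Lambda}'$ only fibrewise and never makes this no--crossing argument, so the alignment of components with the cell structure is missing. In fairness, the paper offers no argument either, and your attempt does isolate the genuine subtlety---but it does not close it.
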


\begin{lem}
\label{lem:A doesn't contain a circle or a long piece}Neither $\Astack$
or $\B$ contain a circle, a closed arc reading out a word of length
$\geq l(w)$ or an open/half--open arc reading out a word of length
$>l(w).$ 
\end{lem}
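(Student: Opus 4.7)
The plan is to argue by contradiction. Suppose that $\alpha \subset \Astack$ is a circle or open arc whose image reads a word of length strictly greater than $l(w)$; the argument for $\B$ is symmetric, reversing the strict height inequalities. The overall strategy is to locate two distinct points in $\alpha$ with equal image under $\Lambda'$, which immediately contradicts the definition of $\Astack$ since both would simultaneously have to lie strictly above the other in the stacking.

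The first observation is that $\Lambda'|_\alpha$ must be injective: if distinct $x,y \in \alpha \subset \Astack$ satisfied $\Lambda'(x) = \Lambda'(y)$, then applying the defining condition of $\Astack$ at both $x$ and $y$ would force both $\eta(\hat{\Lambda}'(x)) > \eta(\hat{\Lambda}'(y))$ and $\eta(\hat{\Lambda}'(y)) > \eta(\hat{\Lambda}'(x))$, which is impossible. The second observation is that, because $\delta\colon \mathbb{S} \to W$ is an immersion and $W$ is a single cycle of word-length $l(w)$, an arc or circle in $\mathbb{S}$ reading a word of length $> l(w)$ cannot have $\delta|_\alpha$ injective: its image in $W$ necessarily wraps completely around the cycle. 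Hence there exist distinct $x,y\in\alpha$ with $\delta(x) = \delta(y)$, and by the first observation $\Lambda'(x)\neq\Lambda'(y)$, with both points lying in the same fibre of $\rho$ over $\mathcal{B}_r^\times$.

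The main obstacle is converting this configuration into the desired coincidence $\Lambda'(x) = \Lambda'(z)$ for two distinct points of $\alpha$. The plan is to exploit that $\hat{\Lambda}'$ is inherited from the primitive stacking $\hat{\Lambda}\colon W \to \mathcal{B}_r^\times \times \mathbb{R}$ via the fibre product construction of \cite[Lemma 2.5]{LouderWilton}, so that heights satisfy $\eta(\hat{\Lambda}'(z)) = \eta(\hat{\Lambda}(\delta(z)))$ for all $z \in \mathbb{S}$. Combined with Lemma \ref{lem:lambda' is not vertex reducible}, which guarantees that every $z$-type vertex in $\Lambda'(\alpha)$ admits a second preimage in $\mathbb{S}$, I expect to transport this extra preimage along $\alpha$ using the $w$-periodicity of $\delta(\alpha)$ and the primitivity of $\Lambda\colon W \to \mathcal{B}_r^\times$ (this is where the hypothesis that $w$ is not a proper power is essential), thereby producing a third point of $\mathbb{S}$ whose $\Lambda'$-image coincides with that of an interior point of $\alpha$. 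The delicate part is tracking heights consistently around a full period of $w$; this is analogous in spirit to the primitive-stacking arguments of \cite{LouderWilton, HelferWise} and constitutes the heart of the proof.
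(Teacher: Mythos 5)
Your preliminary observations are fine (injectivity of $\Lambda'$ on a component of $\Astack$, and surjectivity of $\delta$ restricted to a circle or arc reading a word of length $>l(w)$), and you correctly identify the height compatibility $\eta\bigl(\hat{\Lambda}'(z)\bigr)=\eta\bigl(\hat{\Lambda}(\delta(z))\bigr)$ coming from the fibre--product construction of the stacking. But the heart of the proof is missing: you explicitly defer the key step (``I expect to transport this extra preimage along $\alpha$\dots''), and the plan you sketch --- producing two distinct points of $\alpha$ with the same $\Lambda'$--image by transporting a second preimage around a period of $w$ --- does not obviously work. The second preimage guaranteed by Lemma \ref{lem:lambda' is not vertex reducible} need not lie in $\alpha$ at all, and there is no mechanism given for why periodicity or primitivity would force it back into $\alpha$; indeed your own first observation shows such a coincidence inside $\alpha$ is exactly what cannot happen, so one should not try to manufacture it directly. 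Also, the non--proper--power hypothesis is used only to guarantee that the stacking $\hat{\Lambda}:W\to\b\times\mathbb{R}$ exists (via Louder--Wilton); it plays no role inside this lemma's argument.

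The missing idea is to use the lowest point of the stacking of $W$ itself. Since $w\neq e$, $W$ contains a $z$--type vertex $x$ lying in $\mathcal{B}_{\hat{\Lambda}}$, i.e.\ strictly lowest among all $\Lambda$--preimages of $\Lambda(x)\in\b$. By surjectivity of $\delta|_{C}$ pick $x'\in C$ with $\delta(x')=x$. If $y\neq x'$ satisfies $\Lambda'(y)=\Lambda'(x')$, then $\Lambda\bigl(\delta(y)\bigr)=\rho\bigl(\Lambda'(y)\bigr)=\Lambda(x)$, and $\delta(y)\neq x$ (otherwise $y$ and $x'$ would have the same pair of coordinates in the fibre product, forcing $y=x'$); hence $\eta\bigl(\hat{\Lambda}(\delta(y))\bigr)>\eta\bigl(\hat{\Lambda}(x)\bigr)$, and by the height compatibility $x'\in\mathcal{B}_{\hat{\Lambda}'}$. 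But $x'\in C\subseteq\Astack$ as well, so $\Lambda'(x')$ has exactly one preimage under $\Lambda'$; since $\Lambda'(x')$ is a $z$--type vertex of $\Gamma$, this contradicts Lemma \ref{lem:lambda' is not vertex reducible}. This is the paper's (short) argument, and the case of $\B$ follows by exchanging the roles of the top and bottom of the stacking; your write--up stops short of any version of this step.
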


\begin{proof}
Let $C\subseteq\mathbb{S}$ be a circle, a closed arc reading out
a word of length $\geq l(w)$, or an open/half--open arc reading
out a word of length $>l(w),$ which is contained inside $\Astack$.
Then the map 
\[
\delta\big|_{C}:C\to W
\]
 is surjective. 

$W$ must contain a vertex $x$ belonging to $\mathcal{B}_{\hat{\Lambda}}.$
Then, the preimage $\left(\delta\big|_{C}\right)^{-1}(x)$ contains
a vertex $x'$ belonging to $\mathcal{B}_{\hat{\Lambda}'}$, so that
$x'\in\Astack\cap\mathcal{B}_{\hat{\Lambda}'}.$ This implies that
$\Lambda'(x')\in V(\Gamma)$ is covered exactly once by $\Lambda',$
contradicting Lemma \ref{lem:lambda' is not vertex reducible}. To
prove the lemma for $\B,$ we can swap $\Astack$ and $\B$ in the
proof. 
\end{proof}
So the connected components of $\Astack$ are one of the following: 
\begin{itemize}
\item open arcs that read out words of length $\leq l(w)$,
\item half--open arcs that read out words of length $\leq l(w)$ or 
\item closed arcs that read out words of length $<l(w).$ 
\end{itemize}
The main points here are that every connected component has two ends
(that are either open or closed) and that none of the connected components
can be too long. Moreover, we observe that the only closed ends that
can be in $\Astack$ or $\B$ are those that are the closed ends of
pieces in $\mathbb{S}.$ 

Write 
\[
\mathcal{A}_{\mathrm{end}}\overset{\mathrm{def}}{=}\#\big\{\mathrm{ends\ of\ connected\ components\ of}\ \Astack\big\}
\]
 and 
\[
\mathcal{B}_{\mathrm{end}}\overset{\mathrm{def}}{=}\#\big\{\mathrm{ends\ of\ connected\ components\ of}\ \B\big\}.
\]

\begin{lem}
\label{lem: number of ends in >2k}We have $\mathcal{\mathcal{A}_{\mathrm{end}}}\geq2k$
and $\mathcal{B}_{\mathrm{end}}\geq2k.$
\end{lem}

\begin{proof}
Pick any vertex $w\in W$ such that $w\in\mathcal{A}_{\hat{\Lambda}}.$
Then $w$ has at least $k$ preimages $\delta^{-1}(w)$ in $\mathbb{S},$
all of which must be contained in $\Astack.$ If two such preimages
belong to the same connected component $C$ of $\Astack,$ then $C$
must either be a circle, a closed arc reading out a word of length
$\geq l(w)$ or an open/half--open arc reading out a word of length
$>l(w),$ contradicting Lemma \ref{lem:A doesn't contain a circle or a long piece}.
Thus, there must be at least $k$ connected components in $\Astack,$
so there must be at least $2k$ ends. Repeating the argument with
$w\in\mathcal{B}_{\hat{\Lambda}}$ proves the lemma for $\mathcal{B}_{\mathrm{end}}.$ 
\end{proof}
Write 
\[
\mathcal{A}_{\mathrm{closed}}\overset{\mathrm{def}}{=}\#\big\{\mathrm{closed\ ends\ of\ connected\ components\ of}\ \Astack\big\}
\]
 and 
\[
\mathcal{B}_{\mathrm{closed}}\overset{\mathrm{def}}{=}\big\{\mathrm{closed\ ends\ of\ connected\ components\ of}\ \B\big\}.
\]

\begin{lem}
\label{lem:number of ends is -2=00005Cchi + CLOSED}We have 
\[
\mathcal{A}_{\mathrm{end}}=-2\chi+2\mathcal{A}_{\mathrm{closed}}
\]
and 
\[
\mathcal{\mathcal{B}_{\mathrm{end}}}=-2\chi+2\mathcal{B}_{\mathrm{closed}}.
\]
\end{lem}

\begin{proof}
We prove the lemma for $\Astack.$ Each vertex $v\in V(\Gamma)$ is
covered by $\Astack$ once. If $v$ is covered by a connected component
of $\Astack$ passing \emph{through} one of $v$'s preimages in $\mathbb{S},$
then there must be 
\[
\deg(v)-2
\]
 open ends of $\Astack$ that end at the other preimages of $v$ in
$\mathbb{S},$ to cover the other edges of $\Gamma$ that are incident
at $v.$ 

Otherwise, $v$ is covered by $\Astack$ by a closed end of a connected
component of $\Astack$ that \emph{ends} at one of $v$'s preimages.
In this case, we count exactly $1$ closed end of $\Astack$ that
ends at one of $v$'s preimages and exactly 
\[
\deg(v)-1
\]
open ends of $\Astack$ that end at the other preimages of $v$ in
$\mathbb{S},$ to cover the other edges of $\Gamma$ that are incident
at $v$. 

In total, we see that 
\[
\mathcal{A}_{\mathrm{end}}=\sum_{v}(\deg(v)-2)+\sum_{u}\deg(u),
\]
where the first sum is over $v\in V(\Gamma)$ that are covered by
$\Astack$ by a connected component passing through one of their preimages
and the second sum is over $u\in V(\Gamma)$ that are covered by $\Astack$
by a closed end at one of their preimages. This can be rewritten as
\[
\sum_{v\in V(\Gamma)}(\deg(v)-2)+2\sum_{u}=-2\chi(\Gamma)+2\mathcal{A}_{\mathrm{closed}}.
\]
\end{proof}
Combining Lemma \ref{lem: number of ends in >2k} with Lemma \ref{lem:number of ends is -2=00005Cchi + CLOSED}
implies that 
\[
k\leq-\chi(\Gamma)+\mathcal{A}_{\mathrm{closed}}
\]
and also that 
\[
k\leq-\chi(\Gamma)+\mathcal{B}_{\mathrm{closed}}.
\]
 This implies that 
\begin{equation}
2k\leq-2\chi(\Gamma)+\mathcal{A}_{\mathrm{closed}}+\mathcal{B}_{\mathrm{closed}}\leq-2\chi(\Gamma)+2\sum_{i}\mathrm{del\left(\pi_{i}\right),}\label{eq: 2k < -2chi +2del(=00005Cpi)}
\end{equation}
since the total number of closed ends in $\mathbb{S}$ is $2\sum_{i}\mathrm{del}\left(\pi_{i}\right)$
and each closed end in $\mathbb{S}$ can be, at most, a closed end
in $\Astack$ or a closed end in $\B,$ but not both, since that would
imply that its image in $\Gamma$ is covered only once by the immersion
$\Lambda':\mathbb{S}\to\Gamma$, contradicting Lemma \ref{lem:lambda' is not vertex reducible}.
Combining (\ref{eq: 2k < -2chi +2del(=00005Cpi)}) with (\ref{eq: expected character wth gamsig TILDE})
proves Theorem \ref{thm: word map main theorem}.

\subsection{\label{subsec:ratio of polynomials proof}Proof of Theorem \ref{thm: word map polynomial theorem} }

Let $w\in F_{r}=\left\langle x_{1},\dots,x_{r}\right\rangle $ for
$r$ fixed. Let $m\leq r$ be the minimum number of generators $x_{i_{1}},\dots,x_{i_{m}}$
such that $w$ can be written using the alphabet $\{x_{i_{1}},x_{i_{1}}^{-1},\dots,x_{i_{m}},x_{i_{m}}^{-1}\}$.
So, up to relabeling the generators, $w\in F_{m}=\left\langle x_{1},\dots,x_{m}\right\rangle ,$
with $m\leq r.$ 

Beginning with (\ref{eq: expected character with pi, weingarten and N}),
we have
\[
\begin{aligned} & \mathbb{E}_{w}\left[\chi^{\lambda^{+}(n)}\right]\\
= & \frac{1}{d_{\lambda}}\sum_{\sigma_{f},\tau_{f}}^{\star}\sum_{\pi_{1},\dots,\pi_{l(w)}}^{\leq S_{k}}\left(\prod_{i=1}^{l(w)}c(n,k,\lambda,\pi_{i})\right)\left(\prod_{f\in\{x_{1},\dots,x_{m}\}}\mathrm{Wg}_{n,\left(|w|_{f}k\right)}\left(\sigma_{f},\tau_{f}\right)\right)\\
 & \mathcal{N}\left(\sigma_{f},\tau_{f},\pi_{i}\right).
\end{aligned}
\]

For each $f\in\{x_{1},\dots,x_{m}\},$ for each $\sigma_{f},\tau_{f}\in\refinedpart,$
\[
\mathrm{Wg}_{n,\left(|w|_{f}k\right)}\left(\sigma_{f},\tau_{f}\right)=\sum_{\rho_{f}\leq\sigma_{f}\wedge\tau_{f}}\mu(\rho_{f},\sigma_{f})\mu(\rho_{f},\tau_{f})\frac{1}{(n)_{|\rho_{f}|}}.
\]
 This is equal to 
\[
\begin{aligned} & \frac{C_{1}}{(n)_{|w|_{f}k}}+\frac{C_{2}}{(n)_{|w_{f}|k-1}}+\dots+\frac{C_{|w|_{f}k-|\sigma_{f}\wedge\tau_{f}|+1}}{(n)_{|\sigma_{f}\wedge\tau_{f}|}}\\
= & \frac{g_{\sigma_{f},\tau_{f}}(n)}{(n)_{|w|_{f}k}},
\end{aligned}
\]
where $g_{\sigma_{f},\tau_{f}}(n)$ is a polynomial in $n$ of maximum
degree $|w|_{f}k-|\sigma_{f}\wedge\tau_{f}|$. 

For each $\pi\in\skpart$, we have 
\[
c(n,k,\lambda,\pi)=\frac{d_{\lambda^{+}(n)}(-1)^{|\pi|+k}}{(n)_{|\pi|}}\sum_{\overset{\tau\in S_{k}}{{\scriptstyle {\scriptscriptstyle \iota(\tau)\geq\pi}}}}\chi^{\lambda}(\tau).
\]
Using the hook--length formula, this is equal to 
\[
\frac{\left[\frac{(-1)^{k+|\pi|}}{k!}\sum_{\overset{\tau\in S_{k}}{{\scriptstyle {\scriptscriptstyle \iota(\tau)\geq\pi}}}}\chi^{\lambda}(\tau)\right](n-|\pi|)(n-|\pi|-1)\dots(n-2k+1)}{(n)_{\lambda}},
\]
where $(n)_{\lambda}=\prod_{j=1}^{k}\left(n-k+1-\check{\lambda}_{j}-j\right).$
It follows that 
\[
\prod_{i=1}^{l(w)}c(n,k,\lambda,\pi_{i})=\frac{\prod_{i=1}^{l(w)}h_{k,\lambda,\pi_{i}}(n)}{(n)_{\lambda}^{l(w)}},
\]
where 
\[
h_{k,\lambda,\pi_{i}}(n)=\left[\frac{(-1)^{k+|\pi|}}{k!}\sum_{\overset{\tau\in S_{k}}{{\scriptstyle {\scriptscriptstyle \iota(\tau)\geq\pi}}}}\chi^{\lambda}(\tau)\right](n-|\pi|)(n-|\pi|-1)\dots(n-2k+1)
\]
is a polynomial in $n$ of degree $2k-|\pi_{i}|=k-\mathrm{del}\left(\pi_{i}\right).$
Hence,
\begin{equation}
\mathbb{E}_{w}\left[\chi^{\lambda^{+}(n)}\right]=\frac{p_{k,\lambda,w}\left(n\right)}{\left(n\right)_{\lambda}^{l(w)}\prod_{f}(n)_{|w|_{f}k}},\label{eq: expected character as ratio of polynomials in n}
\end{equation}
where 
\[
p_{k,\lambda,w}\left(n\right)=\frac{1}{d_{\lambda}}\sum_{\sigma_{f}}^{\star}\sum_{\pi_{1},\dots,\pi_{l(w)}}^{\leq S_{k}}\left(\prod_{f}g_{\sigma_{f},\tau_{f}}(n)\right)\left(\prod_{i=1}^{l(w)}h_{k,\lambda,\pi_{i}}(n)\right)\mathcal{N}\left(\sigma_{f},\tau_{f},\pi_{i}\right).
\]
For each collection of $\sigma_{f},\tau_{f},\pi_{i}$:
\begin{itemize}
\item $\left(\prod_{f}g_{f}(n)\right)$ has maximum degree 
\[
\sum_{f}\left(|w|_{f}k-|\sigma_{f}\wedge\tau_{f}|\right)=kl(w)-\sum_{f}|\sigma_{f}\wedge\tau_{f}|,
\]
\item $\left(\prod_{i=1}^{l(w)}h_{k,\lambda,\pi_{i}}(n)\right)$ has maximum
degree 
\[
\sum_{i=1}^{l(w)}\left(k-\mathrm{del}(\pi_{i})\right)=kl(w)-\sum_{i}\mathrm{del(\pi_{i})},
\]
\item $\mathcal{N}\left(\sigma_{f},\tau_{f},\pi_{i}\right)$ has maximum
degree\footnote{This follows from the simple observation that $\gamsig$ has Euler
characteristic $\le\sum_{i}\mathrm{del}(\pi_{i})$, regardless of
if $w$ is a proper power or not.} 
\[
\sum_{i}\mathrm{del}(\pi_{i})+\sum_{f}|\sigma_{f}\wedge\tau_{f}|,
\]
 
\end{itemize}
so that the degree of $p_{k,\lambda,w}(n)$ is less than or equal
to 
\[
2kl(w).
\]
We can rewrite the denominator of (\ref{eq: expected character as ratio of polynomials in n})
using reciprocal polynomials. We have 
\[
\begin{aligned} & \prod_{f}(n)_{|w|_{f}k}\\
= & \prod_{d=1}^{m}\prod_{c=1}^{|w|_{x_{d}}k-1}(n-c)\\
= & n^{kl(w)}\prod_{d=1}^{m}\prod_{c=1}^{|w|_{x_{d}}k-1}\left(1-c\frac{1}{n}\right).
\end{aligned}
\]
Similarly, 
\[
\left(n\right)_{\lambda}^{l(w)}=n^{kl(w)}\left[\prod_{j=1}^{k}\left(1+\left(1+\check{\lambda}_{j}-j-k\right)\frac{1}{n}\right)\right]^{l(w)}.
\]
It follows that 
\[
\mathbb{E}_{w}\left[\chi^{\lambda^{+}(n)}\right]=\frac{\frac{1}{n^{2kl(w)}}p_{k,\lambda,w}\left(n\right)}{\tilde{g}\left(\frac{1}{n}\right)}=\frac{\hat{P}_{w,k,\lambda}\left(\frac{1}{n}\right)}{\tilde{g}\left(\frac{1}{n}\right)},
\]
where 
\[
\tilde{g}\left(x\right)=\prod_{d=1}^{m}\prod_{c=0}^{|w|_{x_{d}}k-1}\left(1-cx\right)\left[\prod_{j=1}^{k}\left(1+\left(1+\check{\lambda}_{j}-j-k\right)x\right)\right]^{l(w)}.
\]
 The numerator $\hat{P}_{w,k,\lambda}\left(\frac{1}{n}\right)$ is
clearly a polynomial in $\frac{1}{n}$ of maximum degree $2kl(w)$. 

Now assume that $l(w)\leq q$, which also implies that $m\leq q.$
Then $\tilde{g}(x)$ always divides 
\[
\hat{g}(x)=\prod_{c=1}^{kq}\left(1-cx\right)^{q}\left[\prod_{j=1}^{k}\left(1+\left(1+\check{\lambda}_{j}-j-k\right)x\right)\right]^{q}.
\]
The sequence $\check{\lambda}_{1}-1,\dots,\check{\lambda}_{k}-k$
is strictly decreasing. Moreover, for any $\lambda\vdash k$ and any
$j\in[k],$ 
\[
1+\check{\lambda}_{j}-j-k\in[1-2k,0],
\]
so that 
\[
\hat{g}(x)\Biggr|\prod_{c=1}^{kq}\left(1-cx\right)^{q}\left[\prod_{j=1}^{2k}\left(1+\left(1-j\right)x\right)\right]^{q}=g_{q,k}(x).
\]

It follows that $\mathbb{E}_{w}\left[\chi^{\lambda^{+}(n)}\right]$
can be written
\[
\frac{P_{w,k,\lambda}\left(\frac{1}{n}\right)}{g_{q,k}\left(\frac{1}{n}\right)},
\]
where 
\[
\begin{aligned} & P_{w,k,\lambda}\left(\frac{1}{n}\right)\\
= & \hat{P}_{w,k,\lambda}\left(\frac{1}{n}\right)\frac{g_{q,k}\left(\frac{1}{n}\right)}{\tilde{g}\left(\frac{1}{n}\right)}
\end{aligned}
.
\]
This is a polynomial in $\frac{1}{n}$ of maximum degree 
\[
\begin{aligned} & 2kl(w)+kq^{2}+2kq-kl(w)-kl(w)+m\\
\leq & 3kq+kq^{2}.
\end{aligned}
\]

\bibliography{C:/Users/ewanc/OneDrive/Documents/Arxiv/library}

\noindent Ewan Cassidy, Department of Mathematical Sciences, Durham
University, Lower Mountjoy, DH1 3LE, Durham, United Kingdom

\noindent ewan.g.cassidy@durham.ac.uk

\noindent (Current institution: Department of Pure Mathematics and
Mathematical Statistics, University of Cambridge, Wilberforce Road,
Cambridge, CB3 0WB

\noindent email: egc45@cam.ac.uk)
\end{document}